\def\r{\mathbb{R}}
\def\b{\mathbb{B}}
\def\d{\mathbb{D}}
\def\n{\mathbb{N}}
\def\c{\mathbb{C}}
\def\s{\mathbb{S}}
\def\pg{\mathfrak{p}}
\def\nor{\mathcal{N}}
\def\dc{\mathcal{D}}
\def\ve{{\varepsilon}}
\def\de{{\delta}}
\def\g{{\gamma}}
\def\a{{\alpha}}
\newcommand{\df}{ \stackrel{\rm def}{=}}
\newcommand{\rth}{\r^3}
\newcommand{\Int}{\operatorname{Int}}
\newcommand{\Div}{\operatorname{Div}}
\newcommand{\wt}{\widetilde}
\def\E{\textsc{e}}
\def\ri{{\rm i}}
\newcommand{\jj}{\operatorname{Jac}_{\lambda_1, \ldots,\lambda_{2(\sigma+k)}}}
\newtheorem{theorem}{Theorem}
\newtheorem{definition}{Definition}
\newtheorem{assertion}{Assertion}[section]
\newtheorem{lemma}{Lemma}
\newtheorem{corollary}{Corollary}
\newtheorem{proposition}{Proposition}
\newtheorem{remark}{Remark}
\newtheorem{conjecture}{Conjecture}
\newtheorem{example}{Example}
\newcommand{\dist}{\operatorname{dist}}
\newcommand{\re}{\operatorname{Re}}
\newcommand{\im}{\operatorname{Im}}
\newcommand{\intc}{\operatorname{Int}}
\newcommand{\cd}{\mathcal{D}}
\newcommand{\cm}{\mathcal{M}}
\newcommand{\cj}{\mathcal{J}}
\newcommand{\Z}{\mathbb Z}
\newcommand{\HH}{\mathbb{H}}
\newcommand{\ed}{\end{document}}
\newcommand{\ben}{\begin{enumerate}}
\newcommand{\een}{\end{enumerate}}
\begin{document}

\title[proper minimal surfaces of arbitrary
topological type]{existence of proper minimal surfaces of arbitrary
topological type}

\author[L. Ferrer]{Leonor Ferrer}
\author[F. Martin]{Francisco Martín}
\author[W. H. Meeks III]{William H. Meeks, III}

\thanks{This research is partially supported by MEC-FEDER Grant no. MTM2007 - 61775.
This material is based upon work for the NSF under Award No. DMS -
0703213. Any opinions, findings, and conclusions or recommendations
expressed in this publication are those of the authors and do not
necessarily reflect the views of the NSF}
\date{\today}

\address[L. Ferrer]{Departamento de Geometría y Topología. Universidad de Granada. 18071, Granada, Spain.}
\address[F. Martín]{Departamento de Geometría y Topología. Universidad de Granada. 18071, Granada, Spain.}
\address[W. H. Meeks III]{Mathematics Department at the University of Massachusetts.
Amherst, MA 01003, USA.}

\email[L. Ferrer]{lferrer@ugr.es}
\email[F. Martín]{fmartin@ugr.es}
\email[W. H. Meeks III]{bill@math.umass.edu}

\begin{abstract}  Consider a domain $\mathcal{ D} $ in $\rth$ which
is convex (possibly all $\rth$) or which is smooth and bounded.
Given any open surface $M$, we prove that there exists a complete,
proper minimal immersion $f \colon M \rightarrow \mathcal{D}$.
Moreover, if ${\mathcal D}$ is smooth and bounded, then we prove
that the immersion $f\colon M\to {\mathcal D}$ can be chosen so that
the limit sets of distinct ends of $M$ are disjoint connected
compact sets in $\partial {\mathcal D}$.

\vspace{.2cm} \noindent   {\it 2000 Mathematics Subject
Classification.} Primary 53A10; Secondary 49Q05, 49Q10, 53C42.
\newline \noindent {\it Key words and phrases:} Complete bounded
minimal surface, proper minimal immersion, Calabi-Yau conjectures.
\end{abstract}
\maketitle

\section{Introduction}
A natural question in the global theory of minimal surfaces, first
raised by Calabi in 1965 \cite{calabi} and later revisited by Yau
\cite{Yau, Yau-2}, asks whether or not there exists a complete
immersed minimal surface in a bounded domain $\cd$ in $\rth$. In
1996, Nadirashvili \cite{na1} provided the first example of a
complete, bounded, immersed minimal surface in $\rth$. However,
Nadirashvili's techniques did not provide properness of such a
complete minimal immersion in any bounded domain. Under certain
restrictions on $\cd$ and the topology of an open
surface\footnote{We say that a surface is {\it open} if it is
connected, noncompact and without boundary.} $M$, Alarcón, Ferrer,
Martín, and Morales \cite{density, marmor1, marmor2, marmor3, propi}
proved the existence of a complete, proper minimal immersion of $M$
in $\cd.$

In this paper we prove that every open surface $M$ can be properly
minimally immersed into certain domains $\cd$ of $\r^3$ as a
complete surface (see Theorem~\ref{th:first}). These domains include
$\r^3$, all convex domains and all bounded domains with smooth
boundary. In contrast to this existence theorem,  Martín and Meeks \cite{mm}
have recently proven that in any Riemannian three-manifold there exist 
many nonsmooth domains with compact closure which do not admit any complete, properly 
immersed surfaces with at least one annular end and bounded mean 
curvature. The above result is a generalization of a previous work for 
minimal surfaces in $\r^3$ by these
authors and Nadirashvili \cite{mmn}. Thus, some geometric constraint on the boundary of a 
bounded domain is necessary to
insure that it contains complete, properly immersed minimal surfaces
of arbitrary topological type.

When the domain ${\cd}$ is smooth and bounded, we obtain further
important control on the limit sets of the ends of $M$ as described
in the next theorem; see Definition~\ref{def:limit} for the
definition of the limit set of an end.

\begin{theorem} \label{th:limit} If $\cd$ is a smooth bounded domain
in $\rth$ and $M$ is an open surface, then there exists a complete,
proper minimal immersion of $M$ in $\cd$ such that the limit sets of
distinct ends of $M$ are disjoint.
\end{theorem}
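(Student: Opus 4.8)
The plan is to adapt the inductive deformation scheme behind Theorem~\ref{th:first} --- Runge-type approximation together with the L\'opez--Ros transformation and a Nadirashvili-type bending along labyrinths of curves --- and to carry along, at every stage of the induction, a system of pairwise disjoint, nested, shrinking ``target'' sets in $\partial\cd$ toward which the successive boundary curves are driven; passing to the limit then pins each end of $M$ to its own target point of $\partial\cd$.

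\emph{Set-up.} Fix a standard exhaustion $M_1\subset M_2\subset\cdots$, $M=\bigcup_n M_n$, by compact connected subsurfaces with smooth boundary, adapted to the ends of $M$ in the usual way: $M_n\subset\Int(M_{n+1})$, the components $\{\Omega^n_j\}_j$ of $M\setminus M_n$ together with the inclusions $\Omega^{n+1}_{j'}\subset\Omega^n_j$ form a rooted tree $T$ whose set of infinite branches is the space of ends of $M$, and each component of $\overline{M_{n+1}\setminus M_n}$ meets $\partial M_n$ in exactly one of the curves $\g^n_j\df M_n\cap\overline{\Omega^n_j}$, $j=1,\dots,k_n$. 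Write $N_\ve(X)$ for the $\ve$-neighbourhood of $X\subset\rth$. By an easy recursion on $T$, choose closed topological discs $U^n_j\subset\partial\cd$ with $U^n_j\cap U^n_{j'}=\emptyset$ for $j\neq j'$, with $U^{n+1}_{j'}\subset\Int(U^n_j)$ whenever $\g^{n+1}_{j'}$ lies in the component of $\overline{M_{n+1}\setminus M_n}$ adjacent to $\g^n_j$, and with $\operatorname{diam}U^n_j\to0$ along every infinite branch. Then along each branch $\beta$ the nested intersection of the corresponding $U^n_j$'s reduces to a single point $p_\beta\in\partial\cd$, and distinct branches give distinct points.

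\emph{Inductive step.} The technical core is the following claim: given $\ve_{n+1}>0$ and a minimal immersion $f_n\colon M_n\to\cd$ with $f_n(\g^n_j)\subset N_{\ve_n}(U^n_j)$ for all $j$, there is a minimal immersion $f_{n+1}\colon M_{n+1}\to\cd$ such that (i)~$\|f_{n+1}-f_n\|_{1,M_n}<\ve_{n+1}$; (ii)~$\dist_{f_{n+1}}(\partial M_{n+1},\partial M_n)>1$; (iii)~$f_{n+1}(\g^{n+1}_{j'})\subset N_{\ve_{n+1}}(U^{n+1}_{j'})$ for all $j'$; (iv)~$f_{n+1}(A)\subset N_{\ve_n}(U^n_j)$ for each component $A$ of $\overline{M_{n+1}\setminus M_n}$ adjacent to $\g^n_j$. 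Property~(ii) is obtained by bending $f_n$ along a fine labyrinth of curves placed in a collar of $\partial M_n$ inside $M_{n+1}$, using the L\'opez--Ros transformation and Runge approximation so that $f_n$ is moved arbitrarily little on $M_n$; properties~(iii) and~(iv) are obtained by a second deformation performed separately and independently in each collar $A$ --- which is legitimate precisely because the collars are pairwise disjoint --- that first squeezes $f_{n+1}(A)$ into a small neighbourhood of the prescribed piece $U^n_j$ of $\partial\cd$ and then brings the outermost boundary curves $\g^{n+1}_{j'}$ still closer to the smaller discs $U^{n+1}_{j'}\subset\Int(U^n_j)$. Here the smoothness of $\partial\cd$ is essential: near each boundary point $\cd$ contains an interior tangent ball and is modelled on a half-space on its convex side, which is exactly the room needed to aim a piece of minimal surface into an arbitrarily small neighbourhood of that point while staying inside $\cd$ --- the same mechanism that underlies Theorem~\ref{th:first}.

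\emph{Passage to the limit.} Choosing $\ve_n\downarrow0$ fast with $\sum_n\ve_n<\infty$, (i) makes $(f_n)$ converge uniformly on compact subsets of $M$ to an immersion $f\colon M\to\overline{\cd}$, and the $C^1$-closeness keeps the intrinsic distances gained in~(ii) from collapsing, so $f$ is complete. By~(iv) and the nesting, the image of any divergent arc in $M$ eventually enters and stays in the collars lying in $\overline{M_{m+1}\setminus M_m}$ for $m$ large, hence lands in $N_{\ve_m}(\partial\cd)$ and escapes every compact subset of $\cd$; since the metric blows up, no divergent arc of finite length reaches $\partial\cd$, so $f(M)\subset\cd$ and $f\colon M\to\cd$ is proper. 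Finally, by~(iii)--(iv), the limit set of the end $\beta$ in the sense of Definition~\ref{def:limit} is contained in the intersection of the sets $N_{\ve_n}(U^n_j)$ running along $\beta$, which is $\{p_\beta\}$, so it equals $\{p_\beta\}$; as distinct ends give distinct points $p_\beta$, the limit sets of distinct ends are disjoint (indeed single points, hence a fortiori connected and compact). The main obstacle is the inductive step, and within it the tension between~(ii) and~(iii): one must inflate the intrinsic diameter of each collar to infinity while at the same time compressing its image into a shrinking neighbourhood of a single point of $\partial\cd$ without leaving $\cd$ --- the classical Calabi--Yau difficulty --- the one genuinely new feature beyond Theorem~\ref{th:first} being that the compression must be carried out end-by-end toward prescribed disjoint targets, which forces the deformations for different ends to be supported in disjoint parts of $M$ and the discs $U^n_j$ to be laid out nested along the tree $T$ from the outset.
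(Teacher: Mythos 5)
Your proposal misses the key geometric reduction that the paper performs for smooth but non-convex domains, and this is not a cosmetic omission but the step that makes the theorem accessible at all. You claim that near each boundary point $\cd$ contains an interior tangent ball ``which is exactly the room needed to aim a piece of minimal surface into an arbitrarily small neighbourhood of that point while staying inside $\cd$ --- the same mechanism that underlies Theorem~\ref{th:first}.'' That is not the paper's mechanism and, as stated, it fails. The labyrinth/L\'opez--Ros bending (encapsulated in Lemma~\ref{lem:afm}) and the bridge principle require a fixed convex ambient domain in which the expanding deformation is controlled; an interior tangent ball $B$ gives convexity at a single point, and if $\partial\cd$ is concave toward $\cd$ near your chosen target $U^n_j$, a complete surface confined to $B$ will have its limit set spread over $\partial B$, most of which sits in the \emph{interior} of $\cd$, so properness in $\cd$ is lost. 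The paper sidesteps this by choosing a single open disc $U\subset\partial\cd$ where $\partial\cd$ has strictly positive Gaussian and mean curvature, producing a smooth convex domain $\cd_U\subset\cd$ with $U\subset\partial\cd_U$, running the entire Case~2 (strictly convex) construction inside $\cd_U$ with the Hausdorff radii $\sum\ve_k$ chosen small enough that the whole surface, and hence its limit set, stays within $U\subset\partial\cd$; properness in $\cd$ is then automatic. Your nested target discs $U^n_j$ are laid out at arbitrary locations on $\partial\cd$ with no convexity requirement, and your inductive step has no ambient convex region to work in, so the tension you yourself flag between inflating intrinsic diameter and compressing the image near a (possibly concave) boundary point is not resolved; it is precisely the obstruction.

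Two further gaps. First, the per-collar ``squeeze'' in your step~(iii)--(iv) is framed as if it could be performed independently in each component $A$ of $\overline{M_{n+1}\setminus M_n}$, but Runge/L\'opez--Ros deformations act on the Weierstrass data of the whole compact Riemann surface and are constrained by a global period problem; the paper avoids this by using the bridge principle to add handles and ends one at a time (Lemmas~\ref{lem:adding-ends} and~\ref{lem:adding-handles}), together with a \emph{simple exhaustion} of $M$ designed exactly so that only one nonannular piece is added per step. Second, Theorem~\ref{th:limit} is stated for arbitrary open surfaces $M$, orientable or not, while the Weierstrass/Runge machinery you invoke is the orientable version. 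The paper devotes Section~\ref{sec:no} to the nonorientable adaptation (Runge approximation equivariant under the antiholomorphic involution, an invariant holomorphic $1$-form without zeros, a modified L\'opez--Ros parameter), and that is needed to complete the proof in full generality.
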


We consider the proof of the above theorem to be the first key point
in an approach by the second two authors and Nadirashvili to
construct certain complete, properly embedded minimal surfaces $M$
in certain bounded domains of $\rth$ as described in the next
conjecture. The cases described in this conjecture where $M$ is
nonorientable appear to be deeper and more interesting than where
$M$ is orientable. Our approaches for dealing with the orientable or
nonorientable cases in this conjecture are essentially the same by
using the theory developed in Section~\ref{sec:no}; specifically, we
refer the reader to Theorem~\ref{th:t0} and Propositions~\ref{th:no}
and \ref{prop:infinity}, which are closely related to parts 2 and 3
of the next conjecture.

\begin{conjecture}[Embedded Calabi-Yau Conjecture,\; Martín, Meeks,
Nadirashvili, Per\'{e}z, Ros] \label{conj:mmn} $\mbox{}$

\begin{enumerate}
\item A necessary and sufficient condition for an open surface
$M$ to admit complete, proper minimal embeddings in {\bf every}
smooth bounded domain in $\rth$ is that $M$ is orientable and every
end of $M$ has infinite genus.

\item A necessary and sufficient condition for an open surface
$M$ to admit a complete, proper minimal embedding in {\bf some}
smooth bounded domain in $\rth$ is that every end of $M$ has
infinite genus and $M$ has only a finite number of nonorientable
ends.

\item Let $\cd_\infty$ be the bounded domain in $\rth$ described
in Example~\ref{ex:universal}, which is smooth except at one point
(see Fig.~\ref{fig:fig-7}). A necessary and sufficient condition for
an open surface $M$ to admit a complete, proper minimal embedding in
$\cd_\infty$ is that every end of $M$ has infinite genus.
\end{enumerate}
\end{conjecture}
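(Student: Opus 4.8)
The plan — the program alluded to above — is to split each of the three parts into the \emph{necessity} and the \emph{sufficiency} of the stated topological conditions, to deduce all three sufficiency statements from a single handle-attachment construction built on Theorem~\ref{th:limit} and on the nonorientable machinery of Section~\ref{sec:no}, and to obtain the necessity statements from the structure theory of ends of embedded minimal surfaces together with the convex-hull property and half-space–type maximum principles.

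\textbf{Necessity.} Let $M$ be completely and properly minimally embedded in a smooth bounded domain $\cd$. First, no end of $M$ may have finite genus: using the description of the ends of complete embedded minimal surfaces (Colding--Minicozzi curvature estimates and lamination theory, with the refinements of Meeks--Rosenberg and Collin) one reduces a hypothetical finite-genus end to a finite-topology end with compact boundary; such an end would be proper in $\r^3$, hence closed and noncompact, contradicting its being contained in the bounded set $\cd$. This yields the forward implications of parts~2 and~3, and — applied to any one smooth bounded domain, e.g.\ a round ball — the infinite-genus part of the forward implication of part~1. For the remaining orientability claim in part~1 one produces a smooth bounded domain admitting no nonorientable complete proper minimal embedding: a round ball $B$ should work, since a one-sided (nonorientable) properly embedded surface in $B\cong\r^3$ is non-separating, which for a complete minimal surface whose limit set lies in $\partial B$ (by the convex-hull property) can be excluded by a flux/separation argument together with the half-space theorem. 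For the ``finitely many nonorientable ends'' clause in part~2 one argues that, since the limit sets of distinct ends are disjoint connected compact subsets of $\partial\cd$, each nonorientable end carries a nontrivial local twisting near its limit set, and the bounded geometry of $\partial\cd$ cannot accommodate infinitely many such; I expect this to follow from a two-sidedness analysis along the ends via the results in Section~\ref{sec:no}.

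\textbf{Sufficiency.} Suppose every end of $M$ has infinite genus and, for parts~2 and~3, only finitely many ends are nonorientable; for part~1 suppose in addition that $M$ is orientable. Present $M$ as the surface obtained from a model open surface $M_0$ — having the same end space as $M$ but genus zero near every end — by attaching a locally finite family of handles $\{H_j\}_{j\in\n}$ (and, in the nonorientable cases, finitely many cross-caps) accumulating only at the ends; this is possible precisely because each end of $M$ has infinite genus. By Theorem~\ref{th:limit} (and its nonorientable counterpart, Theorem~\ref{th:t0} with Propositions~\ref{th:no} and~\ref{prop:infinity}) choose a complete, proper minimal immersion $f_0\colon M_0\to\cd$ whose ends have pairwise disjoint limit sets; generically its self-intersection set $\Sigma_0$ is locally finite and, because the limit sets are disjoint, accumulates only on them. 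Resolve the double points of $\Sigma_0$ one at a time by a minimal-surface surgery: excise two small disjoint disks on the two crossing sheets, glue in a thin almost-catenoidal neck, and correct by a Runge / López--Ros deformation to restore minimality and conformality and to keep the immersion complete and inside $\cd$, arranging the $j$-th surgery to realize the prescribed handle $H_j$. Carrying this out with rapidly shrinking necks and summable errors, pass to the limit to get $f\colon M\to\cd$: the construction makes $f$ a complete, proper minimal immersion that is \emph{embedded} (every double point of $f_0$ resolved, none created) whose underlying surface — after the handles accumulating at the infinite-genus ends — is diffeomorphic to $M$. For part~3 the single non-smooth point of $\cd_\infty$ (Example~\ref{ex:universal}) is used to route the finitely many nonorientable ends and to supply the extra flexibility that a general smooth bounded domain lacks.

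\textbf{Main obstacle.} The crux — and the reason the statement remains conjectural — is the minimal-surface surgery performed infinitely often with a controlled limit. One needs a bridge principle for complete minimal surfaces strong enough to (i) add a handle in a prescribed isotopy class resolving a prescribed double point, (ii) do so with errors in position, conformal type and intrinsic distance that are summable along the entire boundary-accumulating sequence, and (iii) create no new self-intersections at any stage. A version robust enough to survive the limit — in essence a properness- and embeddedness-preserving upgrade of the Nadirashvili-type constructions behind Theorem~\ref{th:limit} — is not presently available, and controlling embeddedness in the limit is the decisive difficulty. By comparison, the nonorientable bookkeeping in parts~2 and~3, and the necessity argument for finitely many nonorientable ends in part~2, should be technical but manageable given the theory of Section~\ref{sec:no}.
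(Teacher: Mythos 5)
The statement you are trying to prove is the Embedded Calabi--Yau \emph{Conjecture}: the paper does not prove it and offers no proof to compare against. What the paper provides is supporting evidence: Theorem~\ref{th:limit} (the immersed, non-embedded analogue), Theorem~\ref{th:t0} (a topological obstruction --- a properly embedded, or properly isotopic to embedded, surface in a smooth bounded domain whose boundary has genus $g$ has at most $g$ nonorientable ends --- which is what motivates the ``finitely many nonorientable ends'' clause in part~2 and the singular domain $\cd_\infty$ in part~3), and Propositions~\ref{th:no} and~\ref{prop:infinity} (proper, stable, but \emph{noncomplete} minimal embeddings in the model domains $\cd_n$ and $\cd_\infty$). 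Your proposal is therefore a program, not a proof, and you concede as much: the decisive step --- an embeddedness- and completeness-preserving surgery/bridge construction that can be iterated infinitely often, resolving all self-intersections of the immersion from Theorem~\ref{th:limit} while adding handles accumulating at the ends, with errors summable in the limit --- is exactly the open problem. Nothing in the paper (or in the literature it cites) supplies such a tool; White's bridge principle (Theorem~\ref{th:bridge}) is for compact nondegenerate surfaces and gives no control of embeddedness or completeness after infinitely many applications, and the L\'opez--Ros/Runge deformations used in \cite{density} destroy embeddedness in general. So the sufficiency half of your argument has a genuine, named gap rather than a technical one.

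The necessity half is also weaker than you suggest. Your reduction of a finite-genus end to ``a finite-topology end proper in $\r^3$'' does not follow from the quoted results: Colding--Minicozzi requires global finite topology and Meeks--P\'erez--Ros requires global finite genus with countably many ends, whereas a single finite-genus end of an infinite-genus surface, possibly nonisolated in the end space, is not covered by either --- this is part of why the statement is conjectural. For the orientability claim in part~1, the correct and elementary reason a ball admits no properly embedded nonorientable surface is the $\Z_2$ separation/two-sidedness argument (the same homological mechanism used in the proof of Theorem~\ref{th:t0}), not a ``flux/half-space'' argument, which has no purchase here since the surface is confined to a bounded set. Finally, for the ``finitely many nonorientable ends'' necessity in part~2 you should invoke Theorem~\ref{th:t0} directly (the bound by the genus of $\partial\cd$) instead of the vague ``local twisting near the limit set'' heuristic; note also that Theorem~\ref{th:t0} as stated requires the embedding, or a proper isotopy to one, so even this piece is only a partial necessity statement for the conjecture as formulated.
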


Embeddedness creates a dichotomy in the Calabi-Yau question. In
other words, when the question is asked whether a given domain of
$\rth$ admits a complete,  injective minimal immersion of a  surface
$M$, the topological possibilities are limited. The first result
concerning the embedded Calabi-Yau question was given  by Colding
and Minicozzi \cite{cm35}. They proved that complete, embedded
minimal surfaces in $\r^3$ with finite topology are proper in
$\r^3$. The relevance of their result to the classical theory of
complete embedded minimal surfaces is that there are many deep
theorems concerning properly embedded minimal surfaces.  Recently,
Meeks, Pérez and Ros \cite{mpr-2} generalized this properness result
of Colding and  Minicozzi to the larger class of surfaces with
finite genus and a countable number of ends.

There are many known topological obstructions for properly minimally
embedding certain open surfaces into $\r^3$. For example, the only
properly embedded, minimal planar domains in $\r^3$ are the plane
and the helicoid which are simply-connected, the catenoid which is
1-connected and the Riemann minimal examples which are planar
domains with two limits ends (see \cite{lopez-ros, collin, mr8,
mpr-1, mpr-3} for this classification result). Because of these
results, the proper minimal  immersions described in this paper must
fail to be embeddings for certain open surfaces.

The constructive nature of the proper minimal surfaces in our
theorems depends on the bridge principle for minimal surfaces and on
generalizing to the nonorientable setting the approximation
techniques used by Alarcón, Ferrer and Martín in \cite{density}.
Also, the construction of the surfaces which we obtain here depend
on obtaining certain compact exhaustions for any open surface $M$;
see Section~\ref{sec:simple} for the case of orientable open
surfaces and the proofs of Propositions~\ref{th:no} and
\ref{prop:infinity} in Section~\ref{sec:6.3} for the case of
nonorientable open surfaces. \vskip 3mm

\noindent \textbf{Acknowledgments.} We are indebted to Nikolai
Nadirashvili for sharing with us his invaluable insights into
several aspects of this theory. We would like to thank Joaquin Pérez
for making some of the figures in this paper and Francisco J. L\'{o}pez
for helpful discussions on the material in Section~\ref{subsec:non}.

\section{Preliminaries and Background}

\subsection{Background on convex bodies and Hausdorff distance}

Given $E$ a bounded regular convex domain of $\r^3$ and $p \in
\partial E$, we will let $\kappa_2(p) \geq \kappa_1(p) \geq 0$ denote the
principal curvatures of $\partial E$ at $p$ (associated to the
inward pointing unit normal). Moreover, we write:
$$\kappa_1(\partial E) \df \mbox{min} \{ \kappa_1(p) \mid p \in
\partial E \} , \qquad \kappa_2(\partial E) \df \mbox{max} \{
\kappa_2(p) \mid p \in \partial E \}.$$ If  we consider $\nor\colon
\partial E \rightarrow \s^2$ to be the outward pointing unit normal or
Gauss map of $\partial E$, then there exists a constant $a>0$
(depending on $E$) such that $\partial E_t=\{p+ t\cdot \nor(p) \mid
p \in \partial E\}$ is a regular (convex) surface for all $ t \in
[-a, +\infty[$. Let  $E_t$ denote the convex domain bounded by
$\partial E_t$. The normal projection to $\partial E$ is represented
as
$$\begin{array}{rccl} \mathcal{P}_E: &\r^3- E_{-a}& \longrightarrow &
\partial E \\ \quad & p+t \cdot \mathcal{N}(p) & \longmapsto
& p \; .\end{array}$$

For a subset $\Upsilon$ in $\r^3$ and a  real $r>0$, we define the
tubular neighborhood of radius $r$ along $\Upsilon$ in the following
way: $T(\Upsilon,r) = \Upsilon + \b(0,r),$ where $\b(0,r)=\{ p \in
\r^3 \mid \|p\|<r\}.$

A convex set of $\r^n$ with nonempty interior is called {\em a
convex body}. The set $\mathcal{ C}^n$ of convex bodies  of $\r^n$
can be made into a metric space in several geometrically reasonable
ways. The Hausdorff metric is particularly convenient and applicable
for defining such a metric space structure. The natural domain for
this metric is the set $\mathcal{ K}^n$ of the nonempty compact
subsets of $\r^n$. For $\mathcal{C}$, $\mathcal{D}\in \mathcal{
K}^n$ the {\em Hausdorff distance} is defined by:

$$ \delta^H(\mathcal{C},\mathcal{D})=\min \left\{ \lambda \geq 0 \; | \; \mathcal{C}
\subset T(\mathcal{D},\lambda), \; \mathcal{D} \subset
T(\mathcal{C},\lambda) \right\}.$$  A theorem of H.
Minkowski (cf. \cite{mingorebulgo}) states that every convex body
$C$ in $\r^n$ can be approximated (in terms of Hausdorff metric) by
a sequence $C_k$ of `analytic' convex bodies.
\begin{theorem}[Minkowski] \label{th:minko}
Let $\mathcal{C}$ be a convex body in $\r^n$. Then there exists a
sequence $\{\mathcal{C}_k \}$ of convex bodies with the following
properties
\begin{enumerate}[\rm 1.]
\item $\mathcal{C}_k \searrow \mathcal{C}$;
\item $\partial \, \mathcal{C}_k$ is an analytic $(n-1)$-dimensional manifold;
\item The principal curvatures of $\partial \, \mathcal{C}_k$ never vanish.
\end{enumerate}
\end{theorem}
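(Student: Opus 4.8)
The plan is to pass to support functions and reduce the statement to the standard dictionary of the Brunn--Minkowski theory. Recall that a convex body $\mathcal{C}\subset\r^{n}$ is determined by its support function $h_{\mathcal{C}}(u)=\max_{x\in\mathcal{C}}\langle x,u\rangle$, $u\in\s^{n-1}$, equivalently by its positively $1$--homogeneous extension $H_{\mathcal{C}}$ to $\r^{n}$; that a function on $\s^{n-1}$ is the support function of a convex body exactly when its $1$--homogeneous extension is sublinear; that inclusion of bodies, Minkowski addition and Hausdorff distance correspond respectively to the pointwise order, the sum and the sup--norm distance of support functions; that for $h\in C^{2}(\s^{n-1})$ the body $\mathcal{C}$ is convex if and only if the symmetric operator $Q_{h}\df\nabla^{2}_{\s^{n-1}}h+h\,\mathrm{Id}$ on $T\s^{n-1}$ is positive semidefinite everywhere; and that, if moreover $Q_{h}>0$ everywhere, then the inverse Gauss map $u\mapsto h(u)\,u+\nabla_{\s^{n-1}}h(u)$ is a diffeomorphism of $\s^{n-1}$ onto $\partial\mathcal{C}$ --- analytic whenever $h$ is analytic --- along which the principal curvatures of $\partial\mathcal{C}$ are the eigenvalues of $Q_{h}^{-1}$. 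With these facts in hand, it is enough to produce, for each $k$, an analytic function $h_{k}$ on $\s^{n-1}$ with $Q_{h_{k}}>0$ everywhere whose body $\mathcal{C}_{k}$ satisfies $\mathcal{C}\subseteq\mathcal{C}_{k+1}\subseteq\mathcal{C}_{k}$ and $\delta^{H}(\mathcal{C}_{k},\mathcal{C})\to0$: then $\mathcal{C}_{k}\searrow\mathcal{C}$, each $\partial\mathcal{C}_{k}$ is an analytic $(n-1)$--manifold, and its principal curvatures are everywhere positive, hence never vanish.

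After a translation --- which changes none of the three properties --- we may assume $0\in\Int\mathcal{C}$, so that $h\df h_{\mathcal{C}}$ is continuous and strictly positive, and we write $B_{r}$ for the closed ball of radius $r$ about the origin. The first step is an \emph{analytic} smoothing of $h$ that keeps it a support function. Let $(q_{t})_{t>0}$ be the heat kernel on $SO(n)$, a nonnegative real-analytic probability density whose associated measures concentrate at the identity as $t\to0^{+}$, and set
\[
h_{t}(u)\df\int_{SO(n)}h(\rho^{-1}u)\,q_{t}(\rho)\,d\rho .
\]
Each $u\mapsto h(\rho^{-1}u)$ is the support function of the rotated body $\rho\,\mathcal{C}$, and a normalized average of support functions is again positively $1$--homogeneous and subadditive, hence a support function; thus $h_{t}=h_{\mathcal{C}_{t}}$ for a convex body $\mathcal{C}_{t}$. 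Since $q_{t}$ is real-analytic on $SO(n)$, convolution against it is analyticity--improving, so $h_{t}$ is real-analytic on $\s^{n-1}$; and $h_{t}\to h$ uniformly as $t\to0^{+}$ because $h$ is continuous. Now fix $\ve_{k}\downarrow0$ with $\ve_{k+1}<\ve_{k}/4$, choose $t_{k}>0$ with $\|h_{t_{k}}-h\|_{\infty}<\ve_{k}/2$, and set $h_{k}\df h_{t_{k}}+\ve_{k}$, i.e.\ $\mathcal{C}_{k}=\mathcal{C}_{t_{k}}+B_{\ve_{k}}$, a convex body with analytic support function. From $|h_{t_{k}}-h|<\ve_{k}/2$ one gets $h+\ve_{k}/2<h_{k}<h+2\ve_{k}$, hence
\[
\mathcal{C}+B_{\ve_{k}/2}\;\subseteq\;\mathcal{C}_{k}\;\subseteq\;\mathcal{C}+B_{2\ve_{k}},
\]
so $\mathcal{C}\subseteq\mathcal{C}_{k}$, $\delta^{H}(\mathcal{C}_{k},\mathcal{C})\le2\ve_{k}\to0$, and, using $2\ve_{k+1}<\ve_{k}/2$, also $\mathcal{C}_{k+1}\subseteq\mathcal{C}+B_{2\ve_{k+1}}\subseteq\mathcal{C}+B_{\ve_{k}/2}\subseteq\mathcal{C}_{k}$. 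Finally $Q_{h_{k}}=\nabla^{2}_{\s^{n-1}}h_{t_{k}}+(h_{t_{k}}+\ve_{k})\,\mathrm{Id}=Q_{h_{t_{k}}}+\ve_{k}\,\mathrm{Id}$, and $Q_{h_{t_{k}}}\succeq0$ because $\mathcal{C}_{t_{k}}$ is convex and $h_{t_{k}}\in C^{2}$; hence $Q_{h_{k}}\succeq\ve_{k}\,\mathrm{Id}>0$, and the principal curvatures of $\partial\mathcal{C}_{k}$, being the eigenvalues of $Q_{h_{k}}^{-1}$, lie in $(0,1/\ve_{k}]$ and in particular never vanish. This yields all three properties for the translated body, and translating back gives the statement for $\mathcal{C}$.

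The work with the support--function dictionary, the Hausdorff estimates and the nesting is routine once the setup is in place. The one step that genuinely requires care is the analytic smoothing: an ordinary compactly supported mollifier would only yield a $C^{\infty}$ body, so one must use a smoothing operator that is simultaneously analyticity--improving \emph{and} a rotation average --- so that the smoothed function remains a support function --- and the heat semigroup on $SO(n)$ does exactly this. I expect this to be the only delicate point; everything else is bookkeeping.
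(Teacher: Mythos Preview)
Your argument is correct. The support--function dictionary is set up accurately; the heat semigroup on $SO(n)$ does send continuous functions to real--analytic ones (via the spectral expansion on a compact Lie group), and because your smoothing is a probability average over rotations it preserves sublinearity of the homogeneous extension, hence the support--function property. The addition of $\ve_k$ then forces $Q_{h_k}\succeq \ve_k\,\mathrm{Id}>0$, which simultaneously guarantees that the inverse Gauss map is an analytic diffeomorphism onto $\partial\mathcal{C}_k$ and that all principal curvatures are strictly positive. The nesting and Hausdorff estimates are routine, as you say.

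As for comparison with the paper: there is nothing to compare. The paper does not prove Theorem~\ref{th:minko}; it merely quotes the statement and refers the reader to \cite[\S 3]{meeksyau} for a modern proof. Your write--up therefore supplies what the paper omits. The approach in \cite{meeksyau} is somewhat different in flavor --- it works more directly with the convex body rather than through the support--function/curvature dictionary --- but your route via rotation--averaged heat smoothing on $SO(n)$ is clean and self--contained, and it makes the strict positivity of the principal curvatures immediate from $Q_{h_k}\succeq\ve_k\,\mathrm{Id}$. The only point worth flagging for a reader is the analyticity--improving property of the heat kernel on a compact Lie group, which you correctly identify as the one nontrivial input; a one--line reference there would be appropriate.
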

A modern proof of this result can be found in \cite[\S 3]{meeksyau}.


\subsection{Preliminaries on minimal surfaces}
\label{subsec:minimal}

Throughout the paper, whenever we write that $M$ is a {\em compact
minimal surface with boundary}, we will mean that this boundary is
regular and $M$ can be extended beyond its boundary. In other words,
we will always assume that $M \subset \Int(M')$, where $M'$ is
another minimal surface.

For the sake of simplicity of notation and language, we will say
that two immersed surfaces in $\rth$ are {\it homeomorphic} if and
only if their underlying topological surface structures are the same.

The following lemma will be a key point (together with the bridge
principle and the existence of simple exhaustions) in the proofs of
the main lemmas of this paper. It summarizes all the information
contained in Lemma 5, Theorem 3 and Corollary 1 in \cite{density}.
\begin{lemma}[Alarcón, Ferrer, Martín] \label{lem:afm}
Let $\mathcal{D}'$ be a convex domain (not necessarily bounded or
smooth) in $\r^3$. Consider  a compact orientable minimal surface
$M$, with nonempty boundary satisfying: $\partial M \subset
\mathcal{D} - \overline{\mathcal{D}}_{-d}$, where $\mathcal{D}$ is a
bounded convex smooth domain, with $\overline{\mathcal{D}} \subset
\mathcal{D}'$, and $d>0$ is a constant. Let $r$ be a positive
constant such that $T(M,r) \subset \dc.$

Then, for any $\varepsilon>0$, there exists a complete minimal
surface $M_\varepsilon$ which is properly immersed in $\mathcal{D}'$
and satisfies:
\begin{enumerate}
\item $M_\varepsilon$ has  the same topological type as $\Int(M)$;
\item $M_\varepsilon \cap T(M, r)$ contains a connected surface $M_\varepsilon^r$
(not a component of $M_\varepsilon \cap T(M, r)$)
with the same topological type as $\Int(M)$ and
$M_\varepsilon^r$ converges in the $\mathcal{C}^\infty$ topology to
$M$, as $\varepsilon \to 0$. Furthermore, the Hausdorff distance
$\delta^H(M_\varepsilon^r,M)<\varepsilon$ ;
\item Each end of $M_\varepsilon-M_\varepsilon^r$ is contained
in $\r^3 - \mathcal{D}_{-2 d-\varepsilon}$;
\item If $\mathcal{D}$ and $\mathcal{D}'$ are smooth and $\mathcal{D}$ is strictly
convex, then $\delta^H(M,M_\varepsilon)< \mathsf{m}(\varepsilon,d,\mathcal{D},\mathcal{D}')$, where:
$$\mathsf{m}(\varepsilon,d,\mathcal{D},\mathcal{D}')\df \varepsilon+
\sqrt{\frac{2 (\delta^H(\mathcal{D},\mathcal{D}')+d+\varepsilon)}{\kappa_1(\partial \mathcal{D})}
+\left( \delta^H(\mathcal{D},\mathcal{D}')+d+\varepsilon \right)^2}.$$
\end{enumerate}
\end{lemma}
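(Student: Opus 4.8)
\emph{Proof strategy.}\quad The plan is to reconstruct the construction of \cite[\S 3--4]{density}, whose Lemma~5, Theorem~3 and Corollary~1 together give the present statement. Since $M$ is orientable, every compact minimal surface homeomorphic to $M$ is described by Weierstrass data, and the single analytic tool used throughout is a L\'opez-Ros transformation of these data together with Runge's approximation theorem: it permits one to modify a compact minimal surface inside a prescribed ``labyrinth'' of annuli placed in a collar of its boundary, simultaneously controlling the induced metric and the position of the surface in $\rth$. Fix once and for all a point $p_0\in\Int(M)$ and a compact disk $K\subset\Int(M)$ with $p_0\in\Int(K)$, to be kept essentially fixed throughout.

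The core is a \emph{one-step lemma}: given a compact orientable minimal surface $\Sigma$ homeomorphic to $M$ with $T(\Sigma,\rho)\subset\mathcal{D}'$ and $\partial\Sigma$ in a thin convex shell of $\mathcal{D}'$, given $\mu>0$, and given a convex domain $\Omega$ with $\overline{\Omega}\subset\mathcal{D}'$, one produces a compact orientable minimal surface $\Sigma'$, again homeomorphic to $M$, with: (i) $\Sigma'\cap T(\Sigma,\rho)$ containing a connected subsurface (not a component) that is $\mathcal{C}^\infty$-close to $\Sigma$ and $\mu$-Hausdorff-close to it; (ii) $\dist_{\Sigma'}(\partial\Sigma',K)\ge\dist_{\Sigma}(\partial\Sigma,K)+1$; (iii) $\partial\Sigma'\subset\mathcal{D}'-\overline{\Omega}$ and $\Sigma'\subset\mathcal{D}'$; (iv) $\delta^H(\Sigma,\Sigma')<\mu$. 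This is achieved in Nadirashvili's two moves. The \emph{completeness move} places a very fine labyrinth of annuli in a collar of $\partial\Sigma$ and, via a L\'opez-Ros deformation whose meromorphic multiplier blows up along the labyrinth, forces every path crossing it to have length greater than $1$, while the deformed surface stays inside the tube $T(\Sigma,\mu)$, so that $\delta^H$ changes by less than $\mu$. The \emph{properness move} then applies a second L\'opez-Ros deformation to drag the new boundary out of $\overline{\Omega}$; here convexity of $\mathcal{D}'$ is decisive, for by the convex-hull property of minimal surfaces (the maximum principle applied to the harmonic coordinate functions), once $\partial\Sigma'$ is kept inside the convex set $\mathcal{D}'$ so is all of $\Sigma'$. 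Freezing a neighborhood of $K$ preserves the $\mathcal{C}^\infty$-closeness there, and Cauchy estimates on the perturbed Weierstrass data keep $\Sigma'$ of the same topological type as $\Sigma$.

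The lemma follows by iteration. Put $\Sigma_0=M$ and $\Omega_0=\mathcal{D}_{-2d-\varepsilon}$, so that $\partial M\subset\mathcal{D}-\overline{\mathcal{D}}_{-d}$ already lies in $\mathcal{D}'-\overline{\Omega_0}$; choose convex domains $\Omega_0\subset\Omega_1\subset\cdots$ with $\overline{\Omega_n}\subset\Omega_{n+1}$ exhausting $\mathcal{D}'$, and $\mu_n>0$ with $\sum_n\mu_n<\varepsilon$. Feeding $(\Sigma_n,\mu_n,\Omega_n)$ into the one-step lemma produces $\Sigma_{n+1}$, homeomorphic to $M$, with $\delta^H(\Sigma_n,\Sigma_{n+1})<\mu_n$, $\dist_{\Sigma_{n+1}}(\partial\Sigma_{n+1},K)\ge n+1$, $\partial\Sigma_{n+1}\subset\mathcal{D}'-\overline{\Omega_n}$, and the subsurfaces furnished by (i) converging in $\mathcal{C}^\infty$. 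Up to these small perturbations the $\Sigma_n$ form an increasing exhaustion of a surface $M_\varepsilon$ which is: complete, because $\dist_{\Sigma_n}(K,\partial\Sigma_n)\to\infty$; properly immersed in $\mathcal{D}'$, because each $\Sigma_n\subset\mathcal{D}'$ while $\partial\Sigma_n$ eventually leaves every $\Omega_j$; of the topological type of $\Int(M)$ by construction; and such that $\delta^H(M,M_\varepsilon)\le\sum_n\mu_n<\varepsilon$ (conclusion~$2$). Moreover a suitable connected piece $M_\varepsilon^r\subset M_\varepsilon\cap T(M,r)$ is $\mathcal{C}^\infty$- and $\varepsilon$-Hausdorff-close to $M$, and the ends of $M_\varepsilon-M_\varepsilon^r$ lie outside $\mathcal{D}_{-2d-\varepsilon}$, the width-$d$ margin between $\partial\mathcal{D}_{-d}$ and $\partial\mathcal{D}_{-2d-\varepsilon}$ absorbing the inward excursion of the first completeness move (conclusion~$3$). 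Conclusion~$4$ is obtained by rerunning the iteration with quantitative bookkeeping on the outward displacement at each properness move: since the surface stays inside $\mathcal{D}'$, which lies within Hausdorff distance $\delta^H(\mathcal{D},\mathcal{D}')$ of $\mathcal{D}$, a point of $\Sigma_n$ is displaced at most $h:=\delta^H(\mathcal{D},\mathcal{D}')+d+\varepsilon$ beyond $\partial\mathcal{D}$; comparing $\partial\mathcal{D}$ with its osculating spheres of radius $r:=1/\kappa_1(\partial\mathcal{D})$ (where strict convexity and smoothness of $\mathcal{D}$ enter) bounds the displacement relative to $\Sigma_n$ by the tangent length $\sqrt{(r+h)^2-r^2}=\sqrt{2h/\kappa_1(\partial\mathcal{D})+h^2}$, and adding the accumulated error $\varepsilon$ yields $\delta^H(M,M_\varepsilon)<\mathsf{m}(\varepsilon,d,\mathcal{D},\mathcal{D}')$.

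The main obstacle is the one-step lemma, and within it the reconciliation of three competing demands on what is essentially a single L\'opez-Ros deformation: it must stretch the induced metric by a full unit of length across the labyrinth (Nadirashvili's delicate length estimate), it must push the boundary a macroscopic distance toward $\partial\mathcal{D}'$, and yet it must move the surface less than $\mu$ in $\rth$ and never carry it outside the convex body $\mathcal{D}'$. Balancing these forces the finely tuned, mutually dependent choices of labyrinths, meromorphic multipliers and Runge approximations that make up \cite[\S 3--4]{density}; everything else in the statement is bookkeeping layered on top of that construction.
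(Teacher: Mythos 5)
This lemma is not proved in the paper at all: it is imported verbatim as a summary of Lemma~5, Theorem~3 and Corollary~1 of \cite{density}, so the only fair comparison is with the strategy of that source. Your outline does identify that strategy correctly — Nadirashvili-type alternation of a ``completeness move'' (labyrinth plus L\'opez--Ros deformation controlled by Runge approximation) and a ``properness move'' pushing the boundary outward through a convex exhaustion, with the convex hull property keeping the surface inside $\mathcal{D}'$, and with the tangent-length estimate $\sqrt{2h/\kappa_1(\partial\mathcal{D})+h^2}$ explaining the formula in item~4. As a description of where the content lives, this is accurate, and you are candid that the hard analytic core (the mutually tuned labyrinths, multipliers and period control) is not reproduced.

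However, your iteration scheme has an internal inconsistency that makes a step fail as stated. Your one-step lemma demands simultaneously (iii) $\partial\Sigma'\subset\mathcal{D}'-\overline{\Omega}$ for an \emph{arbitrary} convex $\Omega$ with $\overline{\Omega}\subset\mathcal{D}'$ and (iv) $\delta^H(\Sigma,\Sigma')<\mu$; these are incompatible once $\Omega$ is much larger than a $\mu$-neighborhood of $\Sigma$, which is exactly the situation you need when you let $\Omega_n\nearrow\mathcal{D}'$. Consequently the derived claim $\delta^H(M,M_\varepsilon)\le\sum_n\mu_n<\varepsilon$ is false in general and directly contradicts your own properness argument: for $\mathcal{D}'$ unbounded (e.g.\ $\mathcal{D}'=\r^3$, which the lemma explicitly allows) a surface contained in $T(M,\varepsilon)$ cannot be properly immersed in $\mathcal{D}'$. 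You also attribute this global bound to conclusion~2, which in fact only asserts $\delta^H(M_\varepsilon^r,M)<\varepsilon$ for the subsurface $M_\varepsilon^r\subset T(M,r)$; the Hausdorff distance of the \emph{whole} surface to $M$ is controlled only under the extra hypotheses of item~4, and then not by summing small steps but by the geometry of the domains, through $\delta^H(\mathcal{D},\mathcal{D}')$ and $\kappa_1(\partial\mathcal{D})$ — which is precisely why the bound $\mathsf{m}(\varepsilon,d,\mathcal{D},\mathcal{D}')$ has that form. To repair the sketch you should drop (iv) from the properness move (it is Hausdorff-small only for the completeness move), keep the $\mu_n$-control only on the fixed compact piece that produces $M_\varepsilon^r$, and obtain item~4 separately from the convex-hull confinement in the bounded strictly convex case, as in \cite{density}.
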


\subsubsection{The bridge principle for minimal surfaces}
Let $M$ be a possibly disconnected, compact minimal surface in
$\rth$, and let $P\subset \rth$ be a thin curved rectangle whose two
short sides lie along $\partial M$ and that is otherwise disjoint
from $M$. The {\em bridge principle} for minimal surfaces states
that  if $M$ is nondegenerate, then it should be possible to deform
$ M \cup P$ slightly to make a minimal surface with boundary
$\partial (M \cup P)$. The bridge principle is a classical problem
that goes back to Paul Lévy in the 1950's. It was involved in the
construction of a curve bounding uncountably many minimal disks. The
bridge principle is easy to apply to compact minimal surfaces which
satisfy the nondegerancy property described in the next definition.

\begin{definition}
A compact minimal surface $M$ with boundary is said to be {\bf
nondegenerate} if there are no nonzero Jacobi fields on $M$ which
vanish on $\partial M$.
\end{definition}

The following version of the bridge principle is the one we need in
our constructions.
\begin{theorem}[White, \cite{W1,W2}] \label{th:bridge}
Let $M$ be a compact, smooth, nondegenerate minimal surface with
boundary, and let $\Gamma$ be a smooth arc such that $\Gamma \cap
M=\Gamma \cap \partial M=\partial \Gamma.$

Let $P_n$ be a sequence of bridges on $\partial M$ that shrink
nicely to $\Gamma.$

Then for sufficiently large $n$, there exists a minimal surface
$M_n$ with boundary $\partial (M \cup P_n)$ and a diffeomorphism
$f_n \colon M \cup P_n \to M_n$ such that
\begin{enumerate}[ \rm \; \; (1)]
\item {\rm area}$(M_n)$ $\to $ {\rm area}$(M)$;
\item $f_n(x)\equiv x$ for all $x \in \partial (M \cup P_n)$;
\item $\|x-f_n(x)\|=O(w_n)$, where $w_n$ is the width of $P_n$
and $O(w_n)/w_n$ is bounded;
\item The maps ${f_n}|_{M}$ converge smoothly on compact subsets of
$M-\Gamma$ to the
identity map $1_M \colon M \to M;$
\item Each $M_n$ is a nondegenerate minimal surface.
\end{enumerate}
\end{theorem}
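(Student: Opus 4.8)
The plan is to realize $M_n$ as a small normal graph over an explicit ``approximately minimal'' model surface built from $M\cup P_n$, to linearize the minimal surface equation about that model, to invert the linearization with bounds \emph{uniform} in the bridge width $w_n$, and then to close the argument by a contraction mapping; this is the strategy of White \cite{W1,W2}. The feature that distinguishes it from routine gluing constructions is the uniform invertibility across a handle that is becoming arbitrarily thin, and that is where I expect the real work to lie.

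First I would build, from $M$, $\Gamma$ and the sequence $P_n$, a family of smooth embedded surfaces $\Sigma_n$, each diffeomorphic to $M\cup P_n$ through a map $\tau_n$, by rounding at scale $w_n$ the four corners where $P_n$ meets $\partial M$ and smoothing the join. The hypothesis that the $P_n$ ``shrink nicely'' to $\Gamma$ should be precisely what makes this possible so that $\Sigma_n$ agrees with $M$ outside an $O(w_n)$-neighborhood of $\Gamma$, and so that the mean curvature $H_{\Sigma_n}$ is supported in that neighborhood and is small in a suitable weighted H\"older norm --- a thin curved rectangle is nearly flat, and the only curvature defect comes from the junctions, where the configuration rescaled by $w_n^{-1}$ is a fixed model.

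Next I would write a surface $C^{2,\alpha}$-close to $\Sigma_n$ as the normal graph of a function $u$, so that minimality becomes $H_{\Sigma_n}+L_n u+Q_n(u)=0$, with $L_n=\Delta_{\Sigma_n}+|A_{\Sigma_n}|^2$ the Jacobi operator and $Q_n$ the quadratic-and-higher remainder satisfying $Q_n(0)=0$, $DQ_n(0)=0$ and Lipschitz bounds that are uniform in $n$ in the rescaled picture; imposing $u|_{\partial\Sigma_n}=0$ turns this into the fixed point problem $u=-L_n^{-1}\bigl(H_{\Sigma_n}+Q_n(u)\bigr)$. The hard part is to show that $L_n$ is invertible for the Dirichlet problem with $\|L_n^{-1}\|\le C$ independent of $n$. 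My plan for this is a Mayer--Vietoris type gluing of two parametrices: split $\Sigma_n=\Sigma_n'\cup N_n$, with $N_n$ a neighborhood of the thin bridge and $\Sigma_n'$ converging smoothly to $M$. On $N_n$ the first Dirichlet eigenvalue of $-\Delta$ is bounded below by $c\,w_n^{-2}$, which for small $w_n$ dominates the bounded potential $|A|^2$, so $L_n$ is invertible there with norm $O(w_n^2)$; on $\Sigma_n'$ the operator is a small perturbation of the Jacobi operator of $M$, which is invertible exactly because $M$ is \emph{nondegenerate}, with an $n$-independent bound. Patching these with a partition of unity yields a right inverse for $L_n$ on $\Sigma_n$ of controlled norm, the cutoff commutators being lower order because they are supported in the overlap and carry the small factors just described; the genuinely delicate point, and the one I expect to consume most of the effort, is the choice of weighted H\"older spaces in which every one of these estimates is scale-uniform.

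Finally, once $\|L_n^{-1}\|\le C$ and $\|H_{\Sigma_n}\|$ is small, the map $u\mapsto -L_n^{-1}\bigl(H_{\Sigma_n}+Q_n(u)\bigr)$ is a contraction on a small ball of $C^{2,\alpha}_0(\Sigma_n)$; its fixed point $u_n$ satisfies $\|u_n\|\to 0$, the normal graph of $u_n$ over $\Sigma_n$ is the required minimal surface $M_n$, and one sets $f_n=(\text{graph map})\circ\tau_n$. Properties (1)--(5) then follow from the quantitative estimates: $\text{area}(M_n)\to\text{area}(M)$ since $\Sigma_n\to M$ and $\|u_n\|\to0$; $\|x-f_n(x)\|=O(w_n)$ from the $O(w_n)$ rounding scale together with $\|u_n\|=O(\|H_{\Sigma_n}\|)$; smooth convergence of $f_n|_M$ to the identity on compact subsets of $M-\Gamma$ from interior Schauder estimates (there $\Sigma_n=M$ and $u_n\to0$ smoothly); and nondegeneracy of $M_n$ because its Jacobi operator differs from $L_n$ by terms of size $O(\|u_n\|)$ while $L_n$ is uniformly invertible, so no nonzero Dirichlet Jacobi field can survive.
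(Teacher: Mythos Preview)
The paper does not prove this theorem at all: it is quoted as a result of White \cite{W1,W2} and used as a black box in the constructions of Lemmas~\ref{lem:adding-ends} and~\ref{lem:adding-handles}. So there is no ``paper's own proof'' to compare your proposal against.

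That said, your sketch is a reasonable outline of the gluing strategy that underlies White's arguments: build an approximate model, linearize, obtain a uniform inverse for the Jacobi operator by splitting into the thin neck (where the first Dirichlet eigenvalue blows up like $w_n^{-2}$) and the bulk (where nondegeneracy of $M$ gives invertibility), and close with a contraction. The points you flag as delicate --- the choice of weighted H\"older spaces that make all estimates scale-uniform, and the control of the cutoff commutators in the overlap region --- are indeed where the real content of White's proofs lies, and your sketch does not actually carry them out. If this were being graded as a proof of the theorem itself, it would be an accurate high-level plan but not a proof; for the purposes of this paper, however, no proof is required since the result is simply cited.
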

\section{Adding handles and ends}
In this section we prove two lemmas which represent main tools in
our construction procedure. Essentially, they tell to us how we can
add a ``pair of  pants'' to a minimal surface with boundary in order
to create a new end (Figure~\ref{fig:Drawing-1-2}.(a)) or how to add
a handle to increase the genus (Figure~\ref{fig:Drawing-1-2}.(b)).
\begin{figure}[htbp]
    \begin{center}
      \includegraphics[width=\textwidth]{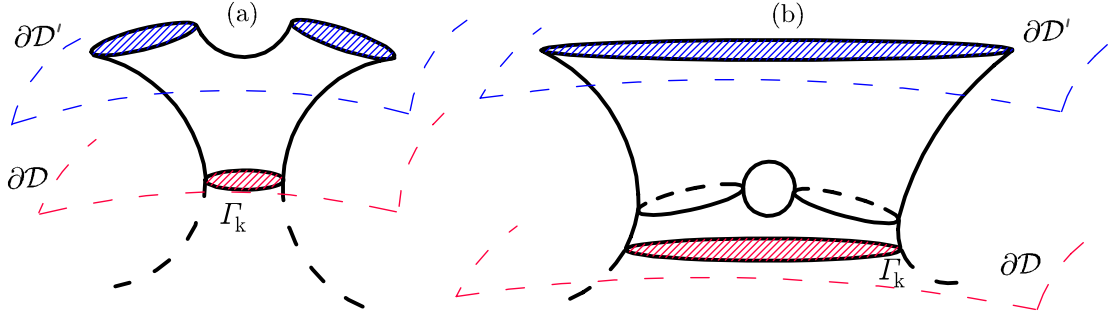}
   \end{center}
   \caption{We can add a ``pair of  pants'' to a minimal surface with boundary
   in order to: (a) create a new end,  or (b) add a handle.}\label{fig:Drawing-1-2}
\end{figure}


\begin{lemma}[{\bf Adding ends}] \label{lem:adding-ends}
Let $\cd$ and $\cd'$ be two smooth bounded strictly convex domains
in $\r^3$ so that $\vec 0 \in \overline{\cd} \subset \cd'$. Consider
a compact minimal surface $M$ with nonempty boundary and satisfying
$\vec 0 \in \Int(M)$ and $\partial M \subset \partial \cd$. Assume
that $M$ has genus $g$ and $k$ components at the boundary ($k \geq
1$), $\partial M = \Gamma_1 \cup \ldots \cup \Gamma_k$. We also
assume that $M$ intersects $\partial \cd$ transversally.

Then for any $\varepsilon>0$, there exists a minimal surface $M_\varepsilon$
satisfying the following properties:
\begin{enumerate}
\item $M_\varepsilon$ is a smooth, immersed minimal surface with genus
$g$ and $k+1$ boundary components. Moreover, $\partial
M_\varepsilon \subset \partial \cd'$,  $\partial
M_\varepsilon $ meets transversally $ \partial \cd'$and $\vec 0 \in \Int
(M_\varepsilon);$
\item The intrinsic distance
$\dist_{M_\varepsilon} (\vec 0, \partial M_\varepsilon) > \dist_M(\vec 0,\partial M)+1;$
\item The surfaces $M_\varepsilon \cap \overline{\dc}$ are graphs over $M$ and
converge in the $C^\infty$ topology to $M$, as $\varepsilon \to 0$. Furthermore,
$\delta^H(M,M_\varepsilon \cap \overline{\dc})<\varepsilon;$
\item $M_\varepsilon - \dc$ consists of $k-1$ annuli, each of whose boundary in $\partial \cd$
lies in $T(\Gamma_j,\varepsilon)$, $j=1, \ldots,k-1$, and a pair of
pants, whose boundary in $\partial \cd$ is a single curve which lies
in  $T(\Gamma_k,\varepsilon)$  (see Figure
~\ref{fig:Drawing-1-2}-(a)). Moreover, the two boundary curves of
the pair of pants which are contained in $\partial \cd'$ are
disjoint;

\item If $\cd$ and $\cd'$ are parallel (boundaries are equidistant), then
$\delta^H(M,M_\varepsilon)< 2 \, C(\varepsilon,\mathcal{D},\mathcal{D}')$, where:
$$ C(\varepsilon,\mathcal{D},\mathcal{D}')\df
\varepsilon+\sqrt{\frac{2 (\delta^H(\mathcal{D},\mathcal{D}')+
2 \varepsilon)}{\kappa_1(\partial \mathcal{D}')}+\left( \delta^H(\mathcal{D},\mathcal{D}')
+2 \varepsilon \right)^2};$$


\end{enumerate}
\end{lemma}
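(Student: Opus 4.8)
The plan is to reduce the construction to a single application of the Alarc\'on--Ferrer--Mart\'in Lemma~\ref{lem:afm} followed by one application of White's bridge principle (Theorem~\ref{th:bridge}), with all the quantitative control coming from item (4) of Lemma~\ref{lem:afm}. First I would fix an intermediate smooth strictly convex domain $\cd_1$ with $\overline{\cd}\subset\cd_1$ and $\overline{\cd_1}\subset\cd'$, and choose $d>0$ small so that $\partial M\subset\cd_1-\overline{(\cd_1)}_{-d}$, i.e. the boundary curves $\Gamma_1,\dots,\Gamma_k$ sit in a thin collar of $\partial\cd_1$ inside $\cd_1$; this uses that $\partial M\subset\partial\cd$ and that $\partial M$ meets $\partial\cd$ transversally, so a small inward push is still transverse. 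I would also pick $r>0$ with $T(M,r)\subset\cd_1$. Applying Lemma~\ref{lem:afm} (with $\cd'$ there replaced by $\cd'$, and $\cd$ there by $\cd_1$), for small $\varepsilon'>0$ I obtain a complete minimal surface $\widehat M$ properly immersed in $\cd'$, homeomorphic to $\Int(M)$, whose core $\widehat M^r$ is $\varepsilon'$-close to $M$ in $\mathcal C^\infty$ and in Hausdorff distance, and each of whose $k$ ends leaves $\cd_1$ (indeed leaves a slightly shrunken $\cd_1$). The quantitative estimate in Lemma~\ref{lem:afm}(4), together with the fact that in the parallel case $\delta^H(\cd,\cd')$ controls everything, is what will yield item (5) after tracking constants through the collar width $d$ (which can itself be absorbed into an $\varepsilon$-term).

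Next I would truncate: intersecting $\widehat M$ with $\overline{\cd'}$ and a suitable large sublevel set, I produce a \emph{compact} minimal surface $N$ with boundary, homeomorphic to $M$, with $k$ boundary curves on $\partial\cd'$, the core being a graph over $M$ and $\mathcal C^\infty$-close to $M$, and the $k$ ``legs'' being long annuli running from near $\partial\cd$ out to $\partial\cd'$ (this gives items (1)--(3) with $k$ boundary components, plus the first half of (4) for the $k-1$ untouched legs). To create the extra end I need to turn one leg into a pair of pants. Here I would take the annular leg ending in $T(\Gamma_k,\varepsilon)$, and attach a thin bridge $P_n$ along its final boundary curve $\Gamma_k'\subset\partial\cd'$ that reconnects two arcs of $\Gamma_k'$ so that topologically $N\cup P_n$ has one more boundary component and the same genus; the arc $\Gamma$ the bridges shrink to should be chosen as a short arc near $\partial\cd'$, sticking slightly outside $\overline{\cd}$ so that the resulting two new boundary curves on $\partial\cd'$ are disjoint (this is the ``disjointness'' clause in (4)). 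Applying Theorem~\ref{th:bridge} to the nondegenerate surface $N$ (nondegeneracy is inherited: perturb $N$ slightly if necessary, since Lemma~\ref{lem:afm}'s output can be taken nondegenerate, or invoke genericity) produces $M_\varepsilon:=N_n$ for large $n$, a smooth immersed minimal surface with the desired genus $g$ and $k+1$ boundary components on $\partial\cd'$, with $f_n$ the identity on $\partial$ and $\mathcal C^\infty$-close to the identity off $\Gamma$, and with a new pair-of-pants piece outside $\overline{\cd}$. Transversality of $\partial M_\varepsilon$ with $\partial\cd'$ follows because $\partial(N\cup P_n)$ can be arranged transverse and $f_n$ is the identity there; $\vec 0\in\Int(M_\varepsilon)$ and the intrinsic-distance growth in (2) follow from $\mathcal C^\infty$-closeness of the core to $M$ together with the length of the legs (each leg crosses the fixed width $\delta^H(\cd,\cd')$, so the intrinsic distance grows by a definite amount, which we arrange to exceed $1$).

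Since the bridge is added near $\partial\cd'$ and $f_n\to\mathrm{id}$ away from $\Gamma$, the part of $M_\varepsilon$ inside $\overline{\cd}$ is unchanged from $N$, so (3) and the ``$\delta^H(M,M_\varepsilon\cap\overline{\cd})<\varepsilon$'' clause persist; and the Hausdorff bound (5) in the parallel case comes from combining Lemma~\ref{lem:afm}(4) with the fact that the bridge adds points only in $T(\partial\cd',O(w_n))\subset T(\cd',\varepsilon)$, whence everything stays within $2\,C(\varepsilon,\cd,\cd')$ of $M$ after adjusting $\varepsilon'$ and $w_n$. The main obstacle, and the step requiring the most care, is the bookkeeping of constants in item (5): one must propagate the estimate of Lemma~\ref{lem:afm}(4) through the auxiliary domain $\cd_1$, the collar width $d$, the truncation, and the bridge width $w_n$, and check that all of these can be absorbed so that the final bound is exactly $2\,C(\varepsilon,\cd,\cd')$ with $C$ as stated (the factor $2$ presumably accounting for the two-step nature of the construction, core-plus-leg and then bridge). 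A secondary technical point is ensuring that the bridge arc $\Gamma$ can be chosen so that the two resulting boundary curves of the pair of pants on $\partial\cd'$ are genuinely disjoint while $\Gamma$ still stays in the region where Theorem~\ref{th:bridge} applies; this is arranged by pushing $\Gamma$ slightly into $\rth-\overline{\cd}$ along $\partial\cd'$, exploiting the room between $\partial\cd$ and $\partial\cd'$.
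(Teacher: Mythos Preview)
Your outline has a genuine gap at the truncation step, and a related gap in the intrinsic distance estimate (2).

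You apply Lemma~\ref{lem:afm} once with outer domain $\cd'$. The output $\widehat M$ is then \emph{complete} and \emph{properly immersed in $\cd'$}; it has no boundary and accumulates on all of $\partial\cd'$. Hence ``intersecting $\widehat M$ with $\overline{\cd'}$'' just returns $\widehat M$, which is not compact, and no ``large sublevel set'' inside $\cd'$ will place the $k$ boundary curves on $\partial\cd'$ as you claim. If instead you overshoot and take the outer domain to be some $\cd'_b\supset\overline{\cd'}$, then $\widehat M\cap\overline{\cd'}$ is compact, but now the ends of $\widehat M$ live in the thick shell $\cd'_b-\cd_{-2d-\varepsilon}$, and you have no control on the topology of $\widehat M\cap\partial\cd'$: each annular end can meet $\partial\cd'$ in many curves, destroying items (1) and (4).

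The paper resolves this by applying Lemma~\ref{lem:afm} \emph{twice}. Step~1 is essentially your step, producing a complete $\widetilde M\subset\cd'$. Then one truncates $\widetilde M$ to a compact $\widetilde M'$ with $\partial\widetilde M'$ in a thin collar $\cd'-\overline{\cd'_{-a'}}$; crucially, because $\widetilde M$ is complete, $\widetilde M'$ can be chosen so large that $\dist_{\widetilde M'}(\vec 0,\partial\widetilde M')>\dist_M(\vec 0,\partial M)+1$, which is how (2) is obtained. Your argument for (2) (``each leg crosses the width $\delta^H(\cd,\cd')$'') fails when $\cd'$ is only slightly larger than $\cd$, which the lemma allows. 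Step~2 then applies Lemma~\ref{lem:afm} again to $\widetilde M'$, now with inner domain essentially $\cd'$ and outer domain $\cd'_b$; because the input boundary already sits in the thin collar of $\partial\cd'$, the Hausdorff estimate forces the new ends to stay in a thin shell around $\partial\cd'$, so the intersection $\widehat M'=\widehat M\cap\overline{\cd'}$ has the right topology and boundary on $\partial\cd'$. Only then is the bridge attached. The factor $2$ in item (5) comes precisely from chaining the Hausdorff bounds of the two density steps (plus negligible bridge contributions), not from ``core-plus-leg and then bridge'' as you guessed.
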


\begin{proof} Fix $\varepsilon>0$.
The proof of this lemma consists of clever combined applications of
the density theorem (Lemma~\ref{lem:afm}) and the bridge principle
(Theorem~\ref{th:bridge}). We have divided the proof into three
steps. \vskip .3cm

\noindent {\bf Step 1}. From our assumptions, we  know that
 $M \subset \Int (M')$, where $M'$
is a regular minimal surface. Take $a>0$ small enough such  that
$\overline{\mathcal{D}}_a \subset \mathcal{D}',$ and $ \delta^H(M,M'
\cap \overline{\mathcal{D}}_a)< \varepsilon/4. $ Consider $d>0$ and
$\varepsilon_0>0$ such that $a>2 \,d +\varepsilon_0$ and
$\varepsilon>d+\varepsilon_0.$ Let $M''\subset \Int(M') \cap
\overline{\mathcal{D}}_a$ be a compact minimal surface with boundary
such that $M''$ is homeomorphic to $M$, $\partial M''\subset
\dc_a-\overline{\dc}_{a-d}$ and
\begin{equation} \label{eq:catarsis-1}
\delta^H(M,M'')< \varepsilon/4.
\end{equation}
Finally, take $r>0$ such that $T(M'',r)\subset \dc_a$. Given
$\varepsilon'' \in (0,\min\{\varepsilon_0,\varepsilon/4\}]$, then we
apply Lemma~\ref{lem:afm} to the data: $\varepsilon'',$ $d, $ $M'',$
$\mathcal{D}_a,$  and $\mathcal{D}'.$ So, we obtain a complete,
minimal surface $\widetilde M$ properly immersed in $\mathcal{D}'$,
which satisfies:

\begin{itemize}
\item $\widetilde M$ has the same topological type as $\Int(M'') \equiv \Int(M)$
and $0 \in \Int(\widetilde M)$;
\item The surface $\widetilde M \cap T(M'',r)$ contains a regular compact surface
$\widetilde M^r$ which is homeomorphic to $M''$ and these surfaces
converge smoothly to $M''$, as $\varepsilon'' \to 0$. Furthermore,
$\delta^H(\widetilde M^r,M'')< \varepsilon'';$
\item Each end of $\widetilde M - \widetilde M^r$ is contained
in $\mathcal{D}'- \overline{\mathcal{D}}$ (here, we use $a-2 \, d-\varepsilon_0>0$);
\item $\delta^H(\widetilde M, M'') < \varepsilon''+\sqrt{2 \,
\frac{\delta^H(\cd, \cd')+d-a+\varepsilon''}{\kappa_1(\partial \cd_a)}+
(\delta^H(\cd,\cd')+d-a+\varepsilon'')^2  }.$
\end{itemize}

Assume now that $\cd$ and $\cd'$ are parallel. From our assumptions
about $d$ and $\varepsilon''$ and taking into account that
$\kappa_1(\cd_a) \geq \kappa_1(\cd'),$ then the last inequality
becomes:
$$ \delta^H(\widetilde M, M'') < \frac{\varepsilon}{4}+\sqrt{2 \,
\frac{\delta^H(\cd, \cd')+\varepsilon}{\kappa_1(\partial \cd ')}
+(\delta^H(\cd,\cd')+\varepsilon)^2  }.$$

\noindent {\bf Step 2}. Consider now $a'>0$ such that
$\overline{\mathcal{D}}_a \subset \mathcal{D}'_{-2a'}$. Let
$\widetilde M'$ be a compact region of $\widetilde M$, with regular
boundary, and such that:
\begin{enumerate}[ \; \; ({\bf A}.1)]
\item $\partial \widetilde M' \subset \mathcal{D}'-\overline{\mathcal{D}'}_{-a'};$
\item  $ \widetilde M^r \subset \widetilde M' \subset \widetilde M$;
\item The origin $\vec 0 \in \Int(\widetilde M')$
and $\dist_{\widetilde M'}(\vec 0,\partial \widetilde M')>\dist_{M}(\vec 0,\partial M)+1;$
\item $ \displaystyle \delta^H(\widetilde M ', M'') < \frac{\varepsilon}{4}+\sqrt{2 \,
\frac{\delta^H(\cd, \cd')+\varepsilon}{\kappa_1(\partial \cd ')}+(\delta^H(\cd,\cd')+\varepsilon)^2  }.$
\end{enumerate}
Take $\varepsilon'_0\in (0,\frac
\varepsilon 4)$ such that $\overline{\mathcal{D}}_a \subset
\mathcal{D}'_{-2a'-\varepsilon'_0}$.

\begin{figure}[htbp]
    \begin{center}
        \includegraphics[width=0.75\textwidth]{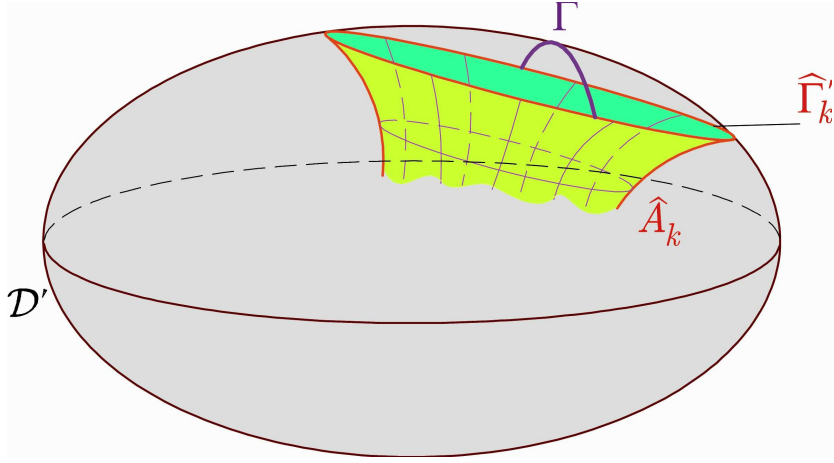}
    \end{center}
    \caption{Let  $\widehat A_k \subset \widehat M'$ be the (closed) annulus
    bounded by $\widehat \Gamma_k$ in $\overline{\cd'}-\cd$.
    Label by $\widehat \Gamma_k'$ the boundary
component of $\widehat A_k$ in $\partial \cd'$. Then, we connect two points $p$
and $q$ in $\widehat \Gamma_k'$ by a
simple smooth arc, $\Gamma \subset \partial \cd'$ so that the bridge
principle can be applied to the configuration $\widehat M' \cup \Gamma$.}
    \label{fig:ends-1}
\end{figure}

At this point, we apply again  Lemma~\ref{lem:afm} to the convex
domains $\mathcal{D}'_{b}$, $\mathcal{D}'$, the constants $d=a'$,
$\varepsilon'\in (0,\varepsilon'_0]$,  $r'>0$, and the compact
minimal surface $\widetilde M'$. Thus, we obtain a complete minimal
surface $\widehat M$ which is properly immersed in $\mathcal{D}'_b$
and satisfies the following conditions:

\begin{itemize}
\item $\widehat M$ has the same topological type as $\Int(\widetilde M')$
(which is homeomorphic to $\Int(M)$), and $\vec 0 \in \Int(\widehat M)$;
\item The surface $\widehat M \cap T(\widetilde M',r')$ contains a regular compact surface
$\widehat M^{r'}$ which is homeomorphic to $\widetilde M'$ and these
surfaces converge smoothly to $\widetilde M'$, as $\varepsilon'\to
0$. Furthermore, $\delta^H(\widehat M^{r'},M'')< \varepsilon';$
\item Each end of $\widehat M - \widehat M^{r'}$ is contained
in $\mathcal{D}'_b- \overline{\mathcal{D}_a}$ (here, we use $\overline{\mathcal{D}}_a \subset
\mathcal{D}'_{-2a'-\varepsilon'_0}$);
\item $\delta^H(\widetilde M', \widehat M)<\varepsilon'+\sqrt{2 \frac{b+a'
+\varepsilon'}{\kappa_1(\partial \cd')}+(b+a'+\varepsilon')^2}.$
\end{itemize}
Notice that if $b$, $a'$ and
$\varepsilon'$ are taken small enough in terms of $\kappa_1(\cd')$,
then the last inequality becomes:
\begin{equation} \label{eq:catarsis-2}
\delta^H \left(\widetilde M', \widehat M \cap \overline{\cd'} \right)<\varepsilon/4.
\end{equation}

\noindent {\bf Step 3}. Finally, we consider $\widehat M' $ a
connected component of $ \widehat M\cap \overline{\cd'}$ with the
same topological type as $M$. Up to an infinitesimal homothety, we
can assume that $\widehat M'$ meets $\partial \cd'$ transversally
and that $\widehat M'$ is {\em nondegenerate}. Let $\widehat
\Gamma_k$ denote the component of $\widehat M' \cap  \partial \cd$
which is contained in the tube $T(\Gamma_k, \frac{\varepsilon}{2})$
and let  $\widehat A_k \subset \widehat M'$ be the (closed) annulus
bounded by $\widehat \Gamma_k$ in $\overline{\cd'}-\cd$. Label by
$\widehat \Gamma_k'$ the boundary component of $\widehat A_k$ in
$\partial \cd'$. Now, we connect two points $p$ and $q$ in $\widehat
\Gamma_k'$ by a simple smooth arc, $\Gamma \subset \partial \cd'$,
such that:
\begin{itemize}
\item $\Gamma \cap \widehat M'= \Gamma \cap \left( \partial \widehat M' \right)
= \partial \Gamma$, see Figure~\ref{fig:ends-1}.
\item $\delta^H\left(\Gamma \cup \widehat M', \widehat M' \right)< \varepsilon/4.$
\end{itemize}
Then we attach a thin bridge $B_1$ along the arc $\Gamma$ to the
surface $\widehat M'$ (see Figure~\ref{fig:ends-2}). This new
minimal surface is called $M_\varepsilon$. Notice that
$M_\varepsilon$ is nondegenerate (Theorem~\ref{th:bridge}) and, if
the bridge $B_1$ is thin enough, we also have:

\begin{equation} \label{eq:catarsis-3} \delta^H( M_\varepsilon,
\widehat M' )< \varepsilon/4. \end{equation}

\begin{figure}[!h]
    \begin{center}
        \includegraphics[width=0.7\textwidth]{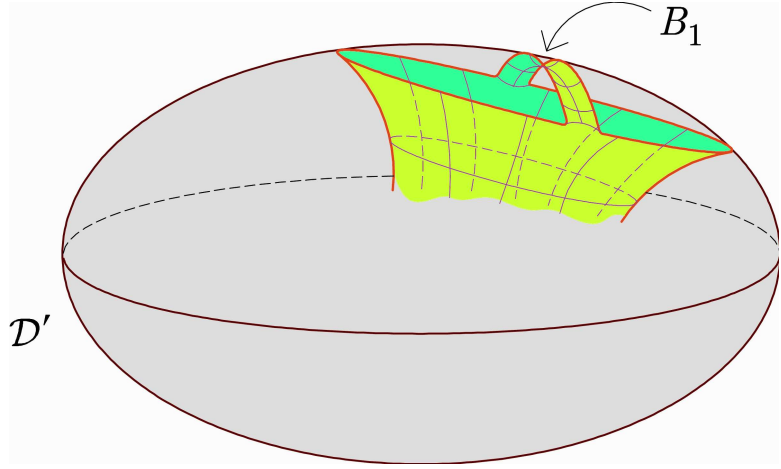}
    \end{center}
    \caption{We attach a thin bridge $B_1$ along the arc $\Gamma$
to the surface $\widehat M'$. In this way we obtain the desired surface $M_\ve.$}
    \label{fig:ends-2}
\end{figure}

Moreover, up to an infinitesimal translation and an infinitesimal
expansive dilation, we can assume that $\vec 0 \in \Int
(M_\varepsilon)$ and that $M_\ve$ can be extended beyond its
boundary. Taking into account that, outside an open neighborhood of
$\Gamma$,  $M_\ve$ converges smoothly to $\widehat M'$ as $\ve\to 0$
(Theorem~\ref{th:bridge}, item (4)), and the previously described
properties satisfied by $\widehat M'$ and $\widetilde M'$, then it
is not hard to see that $M_\ve$ satisfies items (1) to (4) in the
lemma. Item (5) is a direct consequence of the triangle inequality
and the inequalities \eqref{eq:catarsis-1}, (A.4),
\eqref{eq:catarsis-2}, and \eqref{eq:catarsis-3}.
\end{proof}


\begin{lemma}[Adding handles] \label{lem:adding-handles}
Let $\cd$ and $\cd'$ be two smooth bounded strictly convex domains
in $\r^3$ so that $\vec 0 \in \cd \subset \overline{\cd} \subset
\cd'$. Consider a compact minimal surface $M$ with nonempty boundary
and satisfying $\vec 0 \in \Int(M)$ and $\partial M \subset \partial
\cd$. Assume that $M$ has genus $g$ and $k$ boundary components ($k
\geq 1$), $\partial M = \Gamma_1 \cup \ldots \cup \Gamma_k$. We also
assume that $M$ intersects $\partial \cd$ transversally.

Then for any $\varepsilon>0$, there exists a minimal surface
$M_\varepsilon$ satisfying the following properties:
\begin{enumerate}
\item $M_\varepsilon$ is a smooth, immersed minimal surface with genus
$g+1$ and $k$  boundary components. Moreover, $\partial
M_\varepsilon \subset \partial \cd'$,
$\partial M_\varepsilon$ meets transversally $\partial \cd'$ and
$\vec 0 \in \Int (M_\varepsilon);$
\item The intrinsic distance $\dist_{M_\varepsilon} (\vec 0, \partial M_\varepsilon)
> \dist_M(\vec 0,\partial M)+1;$
\item The surfaces $M_\varepsilon \cap \overline{\dc}$ are graphs over
$M$ and converge in the $C^\infty$ topology to $M$, as $\varepsilon \to 0$. Furthermore,
$\delta^H(M,M_\varepsilon \cap \overline{\dc})<\varepsilon;$
\item $M_\varepsilon - \dc$ consists of $k-1$ annuli, whose boundary in $\partial \cd$
lies in $T(\Gamma_j,\varepsilon)$, $j=1, \ldots,k-1$, and an annulus
with a handle,  whose boundary in $\partial \cd$ is a single curve
which lie in  $T(\Gamma_k,\varepsilon)$  (see Figure
~\ref{fig:Drawing-1-2}-(b));

\item If $\cd$ and $\cd'$ are parallel, then $\delta^H(M,M_\varepsilon)< 2 \, C(\varepsilon,\mathcal{D},\mathcal{D}')$,
where the constant $C(\varepsilon,\mathcal{D},\mathcal{D}')$ is
given in Lemma~\ref{lem:adding-ends}.
\end{enumerate}

\end{lemma}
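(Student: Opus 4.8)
The plan is to mirror the three-step structure of the proof of Lemma~\ref{lem:adding-ends} essentially verbatim, changing only the topological move performed in Step~3: instead of attaching a single bridge $B_1$ along an arc $\Gamma\subset\partial\cd'$ joining two points of the same boundary curve $\widehat\Gamma_k'$ (which splits one end into two and thereby raises the number of ends by one while fixing the genus), we attach a bridge joining $\widehat\Gamma_k'$ to itself in the ``handle'' fashion shown in Figure~\ref{fig:Drawing-1-2}-(b), so that $k$ is preserved and $g$ increases by one. First I would reproduce Step~1 unchanged: using $M\subset\Int(M')$, choose $a>0$ with $\overline{\cd}_a\subset\cd'$ and $\delta^H(M,M'\cap\overline{\cd}_a)<\varepsilon/4$, pass to a compact minimal surface $M''\subset\Int(M')\cap\overline{\cd}_a$ homeomorphic to $M$ with $\partial M''\subset\cd_a-\overline{\cd}_{a-d}$ and $\delta^H(M,M'')<\varepsilon/4$, and apply Lemma~\ref{lem:afm} to obtain $\widetilde M$ properly immersed in $\cd'$, complete, homeomorphic to $\Int(M)$, with $0\in\Int(\widetilde M)$, containing a graphical compact piece $\widetilde M^r$ close to $M''$, with all ends of $\widetilde M-\widetilde M^r$ outside $\overline{\cd}$, and with the quantitative Hausdorff estimate in the parallel case giving $\delta^H(\widetilde M,M'')<\varepsilon/4+\sqrt{2(\delta^H(\cd,\cd')+\varepsilon)/\kappa_1(\partial\cd')+(\delta^H(\cd,\cd')+\varepsilon)^2}$.

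Next, Step~2 is likewise imported without change: choose $a'>0$ with $\overline{\cd}_a\subset\cd'_{-2a'}$, take a compact region $\widetilde M'$ of $\widetilde M$ with $\widetilde M^r\subset\widetilde M'\subset\widetilde M$, $\partial\widetilde M'\subset\cd'-\overline{\cd'}_{-a'}$, $\vec 0\in\Int(\widetilde M')$ with $\dist_{\widetilde M'}(\vec 0,\partial\widetilde M')>\dist_M(\vec 0,\partial M)+1$, and the same Hausdorff bound as (A.4); then apply Lemma~\ref{lem:afm} a second time to the pair $\cd'_b\subset\cd'$ with constant $d=a'$ and surface $\widetilde M'$, producing $\widehat M$ properly immersed in $\cd'_b$, homeomorphic to $\Int(M)$, with $\vec 0\in\Int(\widehat M)$, a graphical compact piece $\widehat M^{r'}$, ends of $\widehat M-\widehat M^{r'}$ trapped in $\cd'_b-\overline{\cd_a}$, and, for $b,a',\varepsilon'$ small in terms of $\kappa_1(\partial\cd')$, the estimate $\delta^H(\widetilde M',\widehat M\cap\overline{\cd'})<\varepsilon/4$.

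The only genuinely new content is in Step~3. I would select $\widehat M'$, a connected component of $\widehat M\cap\overline{\cd'}$ homeomorphic to $M$, and after an infinitesimal homothety assume it meets $\partial\cd'$ transversally and is nondegenerate. As before, let $\widehat\Gamma_k$ be the component of $\widehat M'\cap\partial\cd$ lying in $T(\Gamma_k,\varepsilon/2)$, let $\widehat A_k\subset\widehat M'$ be the closed annulus it bounds in $\overline{\cd'}-\cd$, and let $\widehat\Gamma_k'$ be the boundary component of $\widehat A_k$ on $\partial\cd'$. Now choose the arc $\Gamma\subset\partial\cd'$ joining two points $p,q\in\widehat\Gamma_k'$ so that attaching the thin bridge produces a \emph{handle} rather than a new end — concretely, $\Gamma$ together with a subarc of $\widehat\Gamma_k'$ bounds a disk in $\partial\cd'$ that does \emph{not} separate, or more simply one pushes $\Gamma$ slightly off $\partial\cd'$ into $\cd'$ so that $\widehat M'\cup\Gamma$ retracts onto a surface of genus $g+1$ with $k$ boundary components; in either description one still arranges $\Gamma\cap\widehat M'=\Gamma\cap\partial\widehat M'=\partial\Gamma$ and $\delta^H(\Gamma\cup\widehat M',\widehat M')<\varepsilon/4$. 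Applying White's bridge principle (Theorem~\ref{th:bridge}) to this configuration yields a nondegenerate minimal surface $M_\varepsilon$ with boundary $\partial(\widehat M'\cup B_1)$, of genus $g+1$ and $k$ boundary components, with $\delta^H(M_\varepsilon,\widehat M')<\varepsilon/4$ provided $B_1$ is thin enough, and with $M_\varepsilon\to\widehat M'$ smoothly away from a neighborhood of $\Gamma$. Up to an infinitesimal translation and dilation we get $\vec 0\in\Int(M_\varepsilon)$ and $M_\varepsilon$ extends beyond its boundary. Items (1)–(4) then follow from the smooth convergence, the distance estimate (A.3), and the trapping of ends in Step~2 exactly as before, while item (5) is again the triangle inequality applied to \eqref{eq:catarsis-1}, (A.4), \eqref{eq:catarsis-2}, and the analogue of \eqref{eq:catarsis-3}.

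The main obstacle I anticipate is purely topological bookkeeping in Step~3: one must verify that the correct isotopy class of the arc $\Gamma$ on the surface $\partial\cd'$ — or equivalently the correct attaching pattern of the bridge $B_1$ to the annular end $\widehat A_k$ — indeed raises the genus by one and keeps the end count at $k$, rather than inadvertently creating a new end (as in Lemma~\ref{lem:adding-ends}) or producing a nonorientable handle. Once the combinatorics of the bridge attachment are pinned down, every analytic ingredient (Lemma~\ref{lem:afm}, Theorem~\ref{th:bridge}, the Hausdorff estimates) is reused word for word from the proof of Lemma~\ref{lem:adding-ends}, so no new estimates are required.
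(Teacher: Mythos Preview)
Your Steps~1 and~2 are fine and indeed identical to the paper's. The gap is in Step~3: a \emph{single} bridge with both ends on the same boundary curve $\widehat\Gamma_k'$ of the orientable surface $\widehat M'$ can never raise the genus while keeping the surface orientable. Topologically this is a $1$-handle attached along two arcs in the same boundary component; the orientable attachment always splits that component into two (giving a pair of pants, exactly as in Lemma~\ref{lem:adding-ends}), while the twisted attachment keeps one boundary curve but makes the surface nonorientable. Your proposed fix---choosing $\Gamma$ in a different isotopy class on $\partial\cd'$, or pushing $\Gamma$ into $\cd'$---does not help: $\partial\cd'$ is a $2$-sphere, so every simple arc from $p$ to $q$ gives the same attaching data, and the ambient embedding of $\Gamma$ is irrelevant to the intrinsic topology of $\widehat M'\cup P$.

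The paper's Step~3 resolves this by attaching \emph{two} bridges. First attach $B_1$ exactly as in Lemma~\ref{lem:adding-ends}, so that $\widehat\Gamma_k'$ splits into two boundary curves and the annulus $\widehat A_k$ becomes a pair of pants; this intermediate surface is nondegenerate by Theorem~\ref{th:bridge}. Then attach a second bridge $B_2$ along a short arc $\gamma$ joining two opposite points of $\partial B_1$ (one on each of the two new boundary curves). Since $B_2$ connects two \emph{different} boundary components, it merges them back into a single curve and increases the genus by one: the net effect on $\widehat A_k$ is to turn it into an annulus with a handle, as required in item~(4). All Hausdorff estimates and the verification of items~(1)--(5) then go through exactly as you wrote, with \eqref{eq:catarsis-3} now applied twice (once for each bridge, each contributing an arbitrarily small error).
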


\begin{proof}
The proof of this lemma is identical to the one of Lemma
~\ref{lem:adding-ends}, except for Step 3 which is slightly
different. We construct the surface $M_\ve$, like in the third step
of the previous lemma. But this time we add a second bridge $B_2$
along a curve $\gamma$ joining two opposite points in $\partial B_1$
(see Figure~\ref{fig:ends-3}). Notice that, in this way, the old
annular component $\widehat A_k$ becomes an annulus with a handle.
\begin{figure}[!h]
    \begin{center}
        \includegraphics[width=0.7\textwidth]{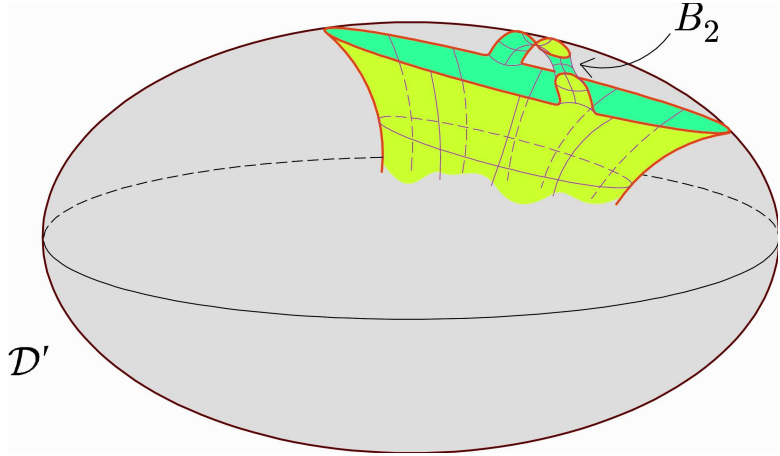}
    \end{center}
    \caption{This time, we construct the surface $M_\ve$, like in the third
    step of Lemma~\ref{lem:adding-ends}. But this time we add a second bridge $B_2$
along a curve $\gamma$ joining two opposite points in $\partial B_1$}
    \label{fig:ends-3}
\end{figure}

\end{proof}

\section{The existence of simple exhaustions} \label{sec:simple}

In this section we  prove that any open orientable  surface $M$ of
infinite topology  has a smooth compact exhaustion $M_1\subset
M_2\subset \cdots M_n\subset \cdots$, called a \emph{simple
exhaustion}. The defining properties for this exhaustion to be
simple when $M$ is orientable are:
 \begin{enumerate}[\bf \; \; \; 1.]
\item $M_1$ be a disk.
\begin{center} For all $n \in \n:$ \end{center}
\item Each component of $M_{n+1}-\Int(M_n)$ has one boundary component in
$\partial M_n$ and at least one boundary component in $\partial M_{n+1}$.
\item $M_{n+1}-\Int(M_n)$ contains a unique nonannular component which
topologically is a pair of pants or an annulus with a handle.
\end{enumerate}
If $M$ has finite topology with genus $g$ and $k$ ends, then we call
the compact exhaustion {\em simple} if properties 1 and 2 hold,
property 3 holds for $n\leq g+k$, and when $n> g+k$, all of the
components of $M_{n+1}-\Int(M_n)$ are annular.

The reader should note that for any simple exhaustion of $M$, each
component of $M-\Int(M_n)$ is a smooth, noncompact proper subdomain
of $M$ bounded by a simple closed curve and for each $n \in \n$,
$M_n$ is connected (see Fig.~\ref{fig:simplex}).
\begin{figure}[htbp]
   \begin{center}
       \includegraphics[width=0.75\textwidth]{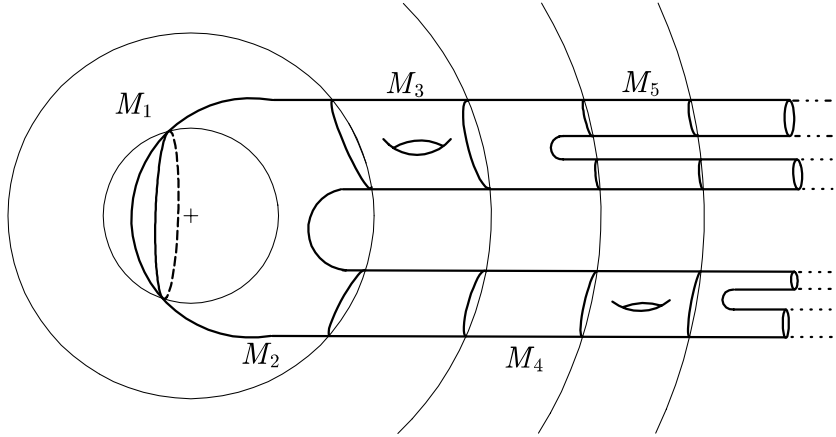}
   \end{center}
   \caption{A topological representation of the terms $M_1$ to $M_5$  in
   the exhaustion of the open surface $M$ given in Lemma~\ref{lem:simple}.}
   \label{fig:simplex}
\end{figure}

The following elementary lemma plays an essential role in the proofs
of Theorems 1 and 2.

\begin{lemma} \label{lem:simple} Every orientable open surface admits a simple exhaustion.
\end{lemma}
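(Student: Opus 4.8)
The plan is to construct the exhaustion inductively, taking an arbitrary smooth compact exhaustion of $M$ as raw material and then repeatedly modifying it so that properties 1--3 hold. First I would fix any smooth compact exhaustion $N_1\subset N_2\subset\cdots$ of $M$ with each $N_i$ connected (possible since $M$ is connected) and with $\partial N_i$ a finite disjoint union of smooth simple closed curves. The initial normalization is to arrange property 1: replace $N_1$ by a small closed disk $M_1$ embedded in $\Int(N_1)$, and discard the first few $N_i$ so that $M_1\subset\Int(N_2)$. The key topological input is the classification of compact surfaces with boundary: any connected compact surface $S$ with boundary that sits between two terms of the exhaustion can be built up from a disk by successively attaching $1$-handles (bands), where each band either joins two distinct boundary circles (decreasing the number of boundary components and possibly changing orientability, but here everything is orientable so it merges two circles into one) or attaches to a single boundary circle (either splitting it into two, giving a pair of pants, or adding genus, giving a handle). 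This is exactly the decomposition encoded in properties 2 and 3.

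The main step is then the inductive refinement: suppose $M_1\subset\cdots\subset M_n$ have been constructed as the first $n$ terms of a simple exhaustion, with $M_n\subset\Int(N_{j})$ for some $j$. Let $W$ be the component of $N_{j+1}-\Int(M_n)$ — or if there are several components, handle them one at a time — and consider the compact surface $W$ with $\partial W\cap\partial M_n$ a union of circles in $\partial M_n$ and the rest of $\partial W$ in $\partial N_{j+1}$. By the handle decomposition above, choose a sequence of disjoint properly embedded arcs in $W$ realizing the attachment of bands, one at a time, so that $W$ is exhausted by $M_n=:P_0\subset P_1\subset\cdots\subset P_m=M_n\cup W$ where each $P_{i+1}-\Int(P_i)$ is a single nonannular piece (pair of pants or annulus-with-handle) together with annuli coming from the boundary circles of $P_i$ that were not touched. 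One must be slightly careful to interleave bands attached to different components of $N_{j+1}-\Int(M_n)$ and to make sure each intermediate $P_i$ is connected and has its new boundary genuinely in the interior of $M$; a short general-position argument and the freedom to push $\partial P_i$ off $\partial N_{j+1}$ into $M$ takes care of this. Relabel the $P_i$ as the next terms $M_{n+1},\ldots,M_{n+m}$ of the exhaustion. Property 2 holds because every component of $M_{i+1}-\Int(M_i)$ contains exactly one of the ``old'' boundary circles of $M_i$ and at least one new circle; property 3 holds because exactly one band is attached at each stage. Iterating over $j$ produces the full exhaustion, and since $\bigcup N_j=M$ we have $\bigcup M_i=M$.

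For the finite-topology case one does the same construction but stops attaching nonannular pieces once the genus $g$ and the $k$ ends have been accounted for: after $g+k$ nonannular stages the surface $M_{g+k}$ already carries all the topology of $M$, so every component of $M-\Int(M_{g+k})$ is an annulus, and the remaining terms of the exhaustion are obtained by adding only annuli, which is automatic from any sufficiently fine exhaustion.

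The main obstacle I expect is purely bookkeeping: organizing the order in which bands are attached so that at every stage the surface stays connected, so that each ``difference'' region $M_{i+1}-\Int(M_i)$ has the prescribed form (one old circle, at least one new circle, and a single nonannular topological type), and so that the new boundary curves can always be taken to be smooth simple closed curves lying strictly inside $M$. None of this is deep — it is the standard Morse-theoretic or handle-theoretic picture of a surface with boundary — but stating it cleanly requires care, which is presumably why the paper calls the lemma ``elementary'' yet still devotes a section to the notion of a simple exhaustion.
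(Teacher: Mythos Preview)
Your handle-decomposition strategy is natural, but as written it does not establish property~2, and the gap is not just bookkeeping. Property~2 says each component of $M_{i+1}-\Int(M_i)$ meets $\partial M_i$ in \emph{exactly one} circle; equivalently, every boundary curve of every $M_i$ must separate $M$. A single $1$-handle attached to an orientable surface along one boundary circle always splits that circle in two (a pair of pants); it never ``adds genus'' as you claim. Genus appears only when a band joins two \emph{distinct} boundary circles of the current $P_i$, and in that case the new piece $P_{i+1}-\Int(P_i)$ is a pair of pants with \emph{two} circles in $\partial P_i$ and one in $\partial P_{i+1}$, violating property~2. Concretely: on the once-punctured torus, your procedure gives $M_1=$ disk, $M_2=$ annulus (one band), and then the only way to reach $M_3$ is through a pair of pants whose two inner circles are the two components of $\partial M_2$; there is no legal $M_3$. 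The correct simple exhaustion here skips directly from the disk to a genus-one surface with one boundary component by attaching an \emph{annulus with a handle} ($\chi=-2$) in a single step.

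The paper handles exactly this point. Before refining, it first modifies the raw exhaustion so that every boundary curve of every $W_n$ separates $M$ (by absorbing arcs that join nonseparating boundary components; see their Assertion on separating boundary curves). Only then does it refine: inside each component $\Delta$ of $W_{n+1}-\Int(W_n)$ (which now has a unique boundary circle $\delta_1$ in $\partial W_n$) it chooses $\Delta'$ to be an annulus with a handle if $\Delta$ has positive genus, and a pair of pants with \emph{separating} outer circles if $\Delta$ is planar. Your argument can be repaired along these lines, but you must (i) preprocess so that boundary curves separate, and (ii) sometimes group two consecutive bands into one step to produce the annulus-with-handle piece; attaching bands one at a time cannot give a simple exhaustion in general.
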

\begin{proof} If $M$ has finite topology, the proof of the
existence of a simple exhaustion is a straightforward consequence of
the arguments we are going to use in the infinite topology
situation. Assume now that $M$ has infinite topology.

Consider a smooth compact exhaustion $W_1 \subset \cdots \subset W_n
\subset \cdots$ of $M$ such that $W_1$ is a disk. We first show
that:

\begin{assertion} \label{as:two}
The exhaustion can be modified so that for every $j \in \n$,  $W_j$ is connected.
\end{assertion}
If this assertion fails to hold for the given exhaustion, there
exists a smallest $n>1$ such that $W_n$ consists of  a  finite
collection of components $W_n(1),\ldots,W_n(m)$ with  $m>1$ and
where $W_1 \subset W_n(1)$.  For each $j  \in \{2, \ldots, m\}$,
choose a smooth embedded arc $\alpha_j \subset M- \Int(W_n)$ joining
a point in the boundary of $W_n(j)$ to a point in the boundary of
$W_n(1)$ and so that these arcs form a pairwise disjoint collection.
Let $W_n'$ be the union of $W_n$ together with a closed regular
neighborhood in $M$ of the union of these arcs; $W_n'$ is connected
since $W_{n-1}$ is connected. Suppose $W_n' \subset W_{n+k}$ for
some $k$. Consider the new exhaustion $W_1 \subset \cdots \subset
W_{n-1}\subset W_n' \subset W_{n+k} \subset \cdots$ for $M$.
Repeating this argument inductively, one obtains  a  new compact
exhaustion  satisfying the connectedness  condition stated in the
assertion. \vskip 4mm

Assume now that the exhaustion fulfills the above assertion.
\begin{assertion} \label{as:three}
The exhaustion can be modified so that for all $j \in \n$, $W_j$ is connected and there are no compact
components in $M-\Int(W_j)$.
\end{assertion}
If assertion were to fail, then for some smallest $n$, $M-\Int(W_n)$
contains a maximal (possibly disconnected) compact domain $F$.  For
some $k>0$,  the connected compact domain $W_n \cup F$ is a subset of $W_{n+k}$ and so, we obtain
a new exhaustion
$$ W_1 \subset \cdots \subset W_{n-1} \subset W_n \cup F \subset W_{n+k} \subset \cdots.$$
Repeating this argument inductively, we obtain a new compact
exhaustion satisfying  the conclusions of Assertion~\ref{as:three}.
\vskip 4mm

Assume now that the exhaustion satisfies Assertion~\ref{as:three}.

\begin{assertion} \label{as:one} The exhaustion can be modified
so that, for every $j\in\n$, each boundary curve of $W_j$  separates $M$, each $W_j$
is connected and there are no compact components in $M - \Int(W_j)$.
\end{assertion}
If this new condition fails to hold for our given exhaustion, there
exists a smallest $n>1$ such that some boundary curve $\alpha$ in
$\partial W_n$ does not separate $M$ and $\partial W_n$ contains at
least one other component different from $\alpha$. In this case,
there exists a simple closed curve $\beta$ which intersects $\alpha$
transversally in a single point and is transverse to $\partial W_n$.
Let $W_n'$ be the union of $W_n$ and a closed regular neighborhood
of the embedded arc in $\beta \cap (M-\Int(W_n))$ whose ends points
are contained in $\alpha$ and in a second boundary component of
$\partial W_n$. The surface $W_n'$ is connected and $M-\Int(W_n')$
has no compact components because $M -\Int(W_n)$ has none. Since
$W_n'$ contains one less boundary component than $W_n$, after a
finite number of modifications of this type  to $W_n$, we obtain a
new connected surface $W_n''$ such that each boundary component of
this surface separates $M$ and $M-\Int(W_n'')$ has no compact
components. The surface $W_n''$ is a subset of some $W_{n+k}$.
Consider the new exhaustion $W_1 \subset \dots \subset
W_{n-1}\subset W_n'' \subset W_{n+k} \subset \dots$. Repeating this
argument inductively, one obtains a new  compact exhaustion  $W_1
\subset \dots \subset W_n \subset \dots$ with the desired
properties. \vskip 2mm

\begin{assertion} \label{as:four}
The exhaustion can be modified to satisfy property 3 in the
definition of simple exhaustion, and so that the exhaustion
continues to satisfy the conclusions of Assertion~\ref{as:one}
\end{assertion}
Suppose that for $k \leq n-1$, $W_{k+1}-\Int(W_k)$ satisfies
property 3 in the definition of simple exhaustion but
$W_{n+1}-\Int(W_n)$ fails to satisfy this property. One way that
$W_{n+1}-\Int(W_n)$ can  fail to satisfy this property is that
$W_{n+1}-\Int(W_n)$ consists entirely of annuli. Since $M$ has
infinite topology, there is a smallest $m >n$ such that
$W_{m}-\Int(W_n)$ has a connected component $F$ which is not an
annulus. Thus, after removing the indexed domains $W_j$, $n<j<m$,
from the exhaustion  and reindexing, we may assume that $W_{n+1}-
\Int(W_n)$ contains a compact component  $\Delta$ that is not an
annulus and which satisfies:
\begin{itemize}
\item $\Delta$ has exactly one boundary component  $\delta_1$ in $\partial W_n$;
the existence of $\delta_1$ is a consequence of
Assertion~\ref{as:one}.
\item $\Delta$ has at least one boundary component in $\partial W_{n+1}$.
\end{itemize}

After the above modification, if $W_{n+1}-\Int(W_n)$ fails to satisfy property 3, then
$| \chi(W_{n+1})|>1$, where
$\chi(\cdot)$ denotes the Euler characteristic.
Let $\{\delta_1,\delta_2, \ldots, \delta_\alpha\}$ be the components
of $\partial W_n$ and let $A_i$, $i=1,\ldots ,\alpha$, be a small
annular neighborhood of $\delta_i$ contained in $\Int(W_{n+1})$. If
the genus of $\Delta$ is positive, then there exists a compact
annulus with a handle $\Delta' \subset \Int( \Delta)$ with $\delta_1
\subset \partial \Delta'$ and $A_1 \subset \Delta'$. If the genus of
$\Delta$ is zero, there exits a pair of pants $\Delta' \subset
\Int(\Delta)$  with $\delta_1 \subset \partial \Delta'$ such that
each of the other two boundary curves of $\Delta'$ separates $M$
into two noncompact domains, and $A_1 \subset \Delta'$ . In either
case, define
$$W_{n+1}''= W_{n} \cup \Delta'\cup \left(\bigcup_{i=1}^\alpha A_i \right).$$
Observe that  $0 \leq | \chi(W_{n+1}'')| < |\chi(W_{n+1})|$. Also
note that the compact exhaustion $$W_1 \subset \cdots \subset W_n
\subset W_{n+1}'' \subset W_{n+1} \subset W_{n+2} \subset \cdots $$
satisfies Assertion~\ref{as:one} and property 3 in the definition of
simple exhaustion for levels $k \leq n$. After a smallest positive
integer $j \leq \left|\chi(W_{n+1}-\Int(W_n))\right|$ of
modifications of this sort, we arrive at the refined exhaustion:
$$W_1 \subset \cdots \subset W_n \subset W_{n+1}''  \subset W_{n+2}''
\subset \cdots \subset W_{n+j}''\subset W_{n+1}\subset \cdots,$$
such that $W_{n+1}-\Int(W_{n+j}'')$ consists of annuli. It is
straightforward to check that the new refined exhaustion
$$W_1 \subset \cdots \subset W_n \subset W_{n+1}''
\subset W_{n+2}'' \subset \cdots \subset W_{n+j-1}''\subset
W_{n+1}\subset \cdots,$$ fulfills property 3 of a simple exhaustion
through the domain $W_{n+1}$ and such that Assertion~\ref{as:one}
also holds.  Repeating these arguments inductively, we obtain an
exhaustion which satisfies property 3 in the definition of a simple
exhaustion. \vskip 4mm

An exhaustion which satisfies  Assertion~\ref{as:four}  is a simple
exhaustion and the lemma now follows.
\end{proof}

\section{Proof of the main theorems} \label{main}
In this section we prove Theorem~\ref{th:limit} in the case of open
orientable surfaces. First, we need the following definition.
\begin{definition}\label{def:limit}
Let $f\colon M \to \cd$ be a proper immersion of an open surface $M$
into a domain $\cd$ in $\rth$. We define the {\bf limit set} of an
end $e$ of $M$ as $$L(e)=\bigcap_{\a\in I}(\overline{f(E_{\a})} -
f(E_{\a})),$$ where $\{ E_{\a}\}_{\a \in I}$ is the collection of
proper subdomains of $M$ with compact boundary which represent $e$.
Notice that $L(e)$ is a compact connected set of $\partial \cd$.
\end{definition}

\begin{theorem} \label{th:first}
Let $M$ be an open orientable surface and let $\mathcal{D}$ be a
domain in $\rth$ which is either convex (possibly all $\rth$) or
bounded and smooth. Then, there exists a complete, proper minimal
immersion $f \colon M \rightarrow \mathcal{D}$. Moreover, we have:
\begin{enumerate}

\item There exists a smooth  exhaustion $\{\mathcal{D}_n \; \mid \; n \in \n \}$ of the
domain $\mathcal{D}$ such that $\{M_n=f^{-1}(\overline{\mathcal{D}_n}) \; \mid
\; n \in \n \}$ is simple exhaustion of $M$;

\item If $\mathcal{D}$ is convex, then for any simple exhaustion
$\{ M_n \; | \; n \in \n\} $ of $M$ and for any smooth exhaustion
$\{\mathcal{D}_n \; \mid \; n \in \n \}$, where $\mathcal{D}_n$, $n
\in \n$, are bounded and strictly convex\footnote{Any convex domain
admits such a exhaustion by a classical result of Minkowski.}, the
immersion $f$ can be constructed in such a way that $f(M_n)=f(M)
\cap \overline{\mathcal{D}_n}$;
\item Suppose $\mathcal{D}$ is smooth and bounded, and fix
some open subset $U \subseteq \partial \mathcal{D}$ such that $U$
has positive mean and positive Gaussian curvature, with respect to
the inward pointing normal to $\partial \cd$. Then the minimal
immersion $f:M \to \cd $ can be constructed in such a way that the
limit set of different ends of $M$ are disjoint subsets of $U$.
\end{enumerate}
\end{theorem}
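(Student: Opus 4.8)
The plan is to build the immersion $f\colon M\to\cd$ as a limit of an infinite sequence of compact minimal surfaces, obtained by repeatedly applying the two lemmas of Section 3 (Adding ends, Lemma~\ref{lem:adding-ends}, and Adding handles, Lemma~\ref{lem:adding-handles}) along a simple exhaustion $M_1\subset M_2\subset\cdots$ of $M$, which exists by Lemma~\ref{lem:simple}. First I would fix the simple exhaustion of $M$; by definition $M_1$ is a disk, and each $M_{n+1}-\Int(M_n)$ contains exactly one nonannular piece, a pair of pants or an annulus with a handle, plus annuli. The combinatorial type of the exhaustion therefore dictates, at each step, whether we must invoke the ``adding ends'' lemma (pair of pants) or the ``adding handles'' lemma (annulus with a handle); in both cases the annular components are handled automatically, since Lemma~\ref{lem:afm} and the bridge construction already produce the requisite $k-1$ annuli running close to the old boundary curves. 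In parallel I would fix a strictly convex smooth exhaustion $\{\cd_n\}$ of $\cd$ (using Minkowski's Theorem~\ref{th:minko} if $\cd$ is convex, or, if $\cd$ is smooth and bounded, an exhaustion of $\cd$ by strictly convex smooth domains supported on the region $U$ where the boundary has positive mean and Gaussian curvature, which exists because $U$ is open and has the required curvature signs).

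Next I would set up the inductive construction. At stage $n$ we have a compact, nondegenerate minimal surface $\Sigma_n$ with $\partial\Sigma_n\subset\partial\cd_n$, meeting $\partial\cd_n$ transversally, with $\vec 0\in\Int(\Sigma_n)$, homeomorphic to $M_n$, and with $\dist_{\Sigma_n}(\vec 0,\partial\Sigma_n)>n$. To pass to $\Sigma_{n+1}$, apply Lemma~\ref{lem:adding-ends} or Lemma~\ref{lem:adding-handles} with $\cd=\cd_n$, $\cd'=\cd_{n+1}$, and $M=\Sigma_n$, with an $\ve=\ve_n$ chosen small enough that: (i) the Hausdorff estimates in item (3) of those lemmas force $\Sigma_{n+1}\cap\overline{\cd_n}$ to be a graph over $\Sigma_n$ at $C^\infty$-distance less than $2^{-n}$, so that the surfaces converge; (ii) item (2) gives the intrinsic distance growth $\dist_{\Sigma_{n+1}}(\vec 0,\partial\Sigma_{n+1})>n+1$, which in the limit yields completeness; and (iii) item (4) places every newly created end outside $\cd_n$, while item (5) together with the triangle inequality makes the total displacement $\sum_n\delta^H(\Sigma_n,\Sigma_{n+1})$ summable, controlling the limit. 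Passing to the limit gives a minimal immersion $f\colon M\to\cd$; properness in $\cd$ follows because $f^{-1}(\overline{\cd_n})$ is compact (it is essentially $\Sigma_n$ up to the $C^\infty$ graph perturbation), and this identification proves item (1) of the theorem, namely that $\{f^{-1}(\overline{\cd_n})\}$ is a simple exhaustion of $M$. Item (2) is then immediate from the construction when $\cd$ is convex and $\{\cd_n\}$ is any prescribed strictly convex exhaustion, since the construction uses exactly those domains and item (3) of the lemmas forces $f(M_n)=f(M)\cap\overline{\cd_n}$.

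For item (3), where $\cd$ is smooth and bounded, the extra ingredient is control of the limit sets so that distinct ends of $M$ land in disjoint subsets of $U$. The idea is to organize the exhaustion of $\cd$ and the choices of $\ve_n$ so that when a new end is created at stage $n$ (inside $M_{n+1}-\Int(M_n)$), its boundary curve in $\partial\cd_{n+1}$ is confined to a small disk $D_n\subset U$, and all subsequent modifications of that end stay inside a nested sequence of shrinking disks $D_n\supset D_{n+1}'\supset\cdots$ whose diameters go to zero geometrically. Since at each later stage the end is perturbed by at most $C(\ve_m,\cd_m,\cd_{m+1})$ in Hausdorff distance (item (5)) and by the tube localization of item (4), choosing the $\ve_m$ rapidly decreasing ensures the end's image converges to a single point (or small connected set) of $U$. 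Because distinct ends are created at distinct stages, and their controlling disks can be chosen pairwise disjoint (there is room in $U$, and each new end's disk can be taken disjoint from all the finitely many previously fixed ones and small enough to stay so forever), the limit sets of distinct ends are disjoint. This last point — arranging the nested-disk bookkeeping so that the countably many ends get pairwise disjoint limit sets while simultaneously keeping all the $C^\infty$, completeness, and properness estimates under control — is the main obstacle, and is precisely the content of Definition~\ref{def:limit} and the strengthening from Theorem~\ref{th:first}(1),(2) to (3); the orientable case of Theorem~\ref{th:limit} follows by specializing to $U=\partial\cd$.
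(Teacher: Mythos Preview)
Your inductive scheme for items (1) and (2) --- fix a simple exhaustion of $M$, a strictly convex smooth exhaustion $\{\cd_n\}$ of $\cd$, and build $\Sigma_n$ by applying Lemma~\ref{lem:adding-ends} or Lemma~\ref{lem:adding-handles} with rapidly shrinking $\ve_n$ --- is exactly the paper's Case~1, and your verification of completeness, properness, and the identification $f^{-1}(\overline{\cd_n})\cong M_n$ matches theirs.

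For item (3), however, there is a genuine gap. You write that when $\cd$ is merely smooth and bounded you will use ``an exhaustion of $\cd$ by strictly convex smooth domains supported on the region $U$''. No such exhaustion exists in general: a nonconvex bounded domain cannot be exhausted by strictly convex ones. The paper does \emph{not} try to exhaust $\cd$. Instead it observes that since $U\subset\partial\cd$ has positive principal curvatures, there is a single smooth strictly convex domain $\cd_U\subset\cd$ with $U\subset\partial\cd_U$; one then runs the strictly convex argument (their Case~2) inside $\cd_U$, using the parallel exhaustion $\cd_n=(\cd_U)_{-t_n}$, and arranges that $\sum\ve_k<\tfrac12\dist(\Sigma_1,\partial\cd-U)$ so that the limit surface stays near $U$. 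Properness in $\cd_U$ with limit set in $U\subset\partial\cd$ then gives properness in $\cd$. Note also that item (5) of the two lemmas, which you invoke for the Hausdorff control, requires $\cd$ and $\cd'$ to be \emph{parallel}; this is why the paper's Case~2 uses the equidistant family $\cd_{-t_n}$ rather than an arbitrary exhaustion.

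Your mechanism for disjoint limit sets also differs from the paper's and, as stated, does not quite work. You propose to pre-assign each newly created end a small disk $D_n\subset U$ and confine it there. But when a pair of pants is attached at step $n$, the two new boundary curves in $\partial\cd_{n+1}$ are both produced from the single old curve $\Gamma_k$ and both lie near it (item (4) of Lemma~\ref{lem:adding-ends}); you cannot send them to arbitrary pre-chosen disjoint disks. The paper's device is a posteriori: after constructing $\Sigma_i$, one \emph{measures} $2\delta_i=\min_{j\ne k}\dist_{\rth}(C_j,C_k)$ over the components $C_j$ of $\Sigma_i\cap(\overline{\cd}-\cd_i)$, and then rescales the remaining tail of the $\ve$-sequence so that $\sum_{k>i}\ve_k<\delta_i$. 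Property $(4_n)$ then guarantees that for all $n\ge i$ the components of $\Sigma_n\cap(\overline{\cd}-\cd_i)$ stay at distance $>\delta_i$, which in the limit forces $\dist(L(e_1),L(e_2))\ge\delta_j>0$ whenever $e_1,e_2$ are represented by distinct components of $\Sigma-\cd_j$. This replaces your nested-disk bookkeeping by a single separation constant per level, and is what actually closes the argument.
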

\begin{proof}
In the proof of this theorem, we will distinguish three cases,
depending on the nature of the domain $\cd$.

\noindent {\bf Case 1.} $\cd $ {\em is a  general convex domain, not
necessarily bounded or smooth}.

Let $M$ be an open surface and $\mathcal{M}=\{M_1 \subset M_2
\subset  \cdots \subset M_n \subset \cdots \}$ be a simple
exhaustion of $M$. Consider $\{ \cd_n, \; n \in \n\}$ a smooth
exhaustion of $\cd$, where $\cd_n$ is bounded and strictly convex,
for all $n$. The existence of such an exhaustion is guaranteed by a
classical result of Minkowski (see \cite[\S 2.8]{mingorebulgo}).

Our purpose is to construct a sequence of minimal surfaces $\{ \Sigma_n \; | \; n \in \n\}$ with
nonempty boundary satisfying:
\begin{enumerate}[(1$_n$)]
\item $\vec 0 \in \Sigma_n$ and $\partial \Sigma_n \subset \partial \cd_n$;
\item \label{item:graph}For $i=1, \ldots,n-1$, $\Sigma_n \cap \overline{\cd}_{i}$ is a normal
graph over its projection $\Sigma_{i,n} \subset \Sigma_{i}'$, where
$\Sigma_i'$ is a larger compact minimal surface containing
$\Sigma_i$ in its interior. Furthermore,  if we write
$\Sigma_n=\{p+f_{n,i}(p) \cdot N_i(p) \; | \; p \in \Sigma_{i,n}
\}$, where $N_i$ is the Gauss map of $\Sigma_{i,n}$, then:
 \begin{enumerate}[(\ref{item:graph}$_n$- a)]
 \item $ \displaystyle | \nabla f _{n,i} | \leq \sum_{k=i+1}^n \varepsilon_k,$ and
 \item $\displaystyle \delta^H(\Sigma_n \cap \overline{\cd}_{i}, \Sigma_{i}) \leq
  \sum_{k=i+1}^n \varepsilon_k,\quad \mbox{for } i=1, \ldots,n-1.$
 \end{enumerate}
 where $\varepsilon_k>0$, for all $k$, and $\displaystyle \sum_{k=1}^\infty \varepsilon_k<1.$
\item $\dist_{\Sigma_n}(\vec 0, \partial \Sigma_n) \geq \dist_{\Sigma_1}(\vec 0, \partial \Sigma_1)+n-1;$
\end{enumerate}

The sequence $\{ \Sigma_n \; | \; n \in \n\}$ is obtained by recurrence.

In order to define the first element of the family, we consider
an analytic Jordan curve $\Gamma_1$  in $\partial \cd_1$ and we solve
the classical Plateau problem associated to this curve. The minimal disk
obtained in this way is smooth and embedded \cite{meeksyau} and it
is the first term of the sequence $\Sigma_1$.
Up to a suitable translation in $\r^3$, we can assume that $\vec 0
\in \Int(\Sigma_1) \subset \cd_1.$ It is obvious that $\Sigma_1$
satisfies Properties (1$_1)$ and (4$_1$) (notice that the other two
properties do not make sense for $n=1$.)

Assume now we have defined $\Sigma_n$, satisfying items from (1$_n$)
to (4$_n$).  We are going to construct the minimal surface
$\Sigma_{n+1}$. As the exhaustion $\mathcal{M}$ is simple, then we
know that $M_{n+1}- \Int(M_n)$ contains a unique nonannular
component $N$ which topologically is a pair of pants or an annulus
with a handle. Label $\gamma$ as the connected component of
$\partial N$ that is contained in $\partial M_n$. We label the
connected components of $\partial \Sigma_n$, $\Gamma_1, \ldots,
\Gamma_k$, in such a way that $\gamma$ maps to $\Gamma_k$ by the
homeomorphism which maps $M_n$ into $\Sigma_n$. Then, we apply
Lemma~\ref{lem:adding-ends} or \ref{lem:adding-handles} (depending
on the topology of $N$) to the data
$$\cd=\cd_n, \quad \cd'=\cd_{n+1}, \quad M=\Sigma_n. $$
Then, we obtain a family of minimal surfaces with boundary, $\Sigma_\varepsilon$, satisfying:
\begin{enumerate}[(i)]
\item $\partial \Sigma_\varepsilon \subset \partial \cd_{n+1}$ and
$\vec 0 \in \Int(\Sigma_\varepsilon);$
\item $\dist_{\Sigma_\varepsilon} (\vec 0, \partial
\Sigma_\varepsilon)>\dist_{\Sigma_n}(\vec 0, \partial \Sigma_n)+1
\geq \dist_{\Sigma_1}(\vec0, \partial \Sigma_1)+n $ (notice that
$\Sigma_n$ satisfies property (3$_n$));
\item \label{sigma-3} The surfaces $\Sigma_\varepsilon \cap \overline{\cd_n}$ are
diffeomorphic to $\Sigma_n$ and converge in the $C^\infty$ topology
to $\Sigma_n$, as $\varepsilon \to 0$. Furthermore,
$\delta^H(\Sigma_n, \Sigma_\varepsilon \cap \overline{\cd_n}) <
\varepsilon;$
\item \label{sigma-4} $\Sigma_\varepsilon - \dc_n$ consists of $k-1$ annuli whose
boundary in $\partial \cd_n$ lies in $T(\Gamma_j,\varepsilon)$,
$j=1, \ldots,k-1$, and a nonannular piece which is homeomorphic to
$N$  whose boundary in $\partial \cd_n$ is a single curve which lies
in  $T(\Gamma_k,\varepsilon)$;
\end{enumerate}
Item \eqref{sigma-3} and property (2$_n$) imply that
$\Sigma_\varepsilon \cap \overline{\cd}_i$ can be expressed as a
normal graph over its projection $\Sigma_{i,\varepsilon} \subset
\Sigma_i'$, $i=1, \ldots, n$; $\Sigma_\varepsilon \cap
\overline{\cd}_i=\{p+f_{\varepsilon,i}(p) \, N_i(p) \; | \; p \in
\Sigma_{i,\ve} \}.$ Since as $\ve\to 0$ $\Sigma_\varepsilon$
converges smoothly to $\Sigma_n$ in $\cd_n$ and $\Sigma_n$ satisfies
(2$_n$-a), then we have:
\begin{equation} \label{eq:grad}
|\nabla f_{\varepsilon,i}| < \sum_{k=i+1}^{n+1} \varepsilon_k.
\end{equation}
Moreover, if we take $\varepsilon < \varepsilon_{n+1}$, then item
\eqref{sigma-3} and property (2$_n$-2) implies that
\begin{equation} \label{eq:hauss}
\delta^H \left(\Sigma_\varepsilon \cap \overline{\cd}_i, \Sigma_i
\right)< \sum_{k=i+1}^{n+1} \varepsilon_k;
\end{equation}
here we have also used the triangle inequality for $\delta^H$.

Then, we define $\Sigma_{n+1}\df\Sigma_\varepsilon$, where
$\varepsilon$ is chosen small enough in order to satisfy
\eqref{eq:grad} and \eqref{eq:hauss}. It is
clear that $\Sigma_{n+1}$ so defined fulfills (1$_{n+1}$), (2$_{n+1}$) and (3$_{n+1}$).

Now, we have constructed our sequence of minimal surfaces $\{
\Sigma_n \}_{n \in \n}$. Taking into account properties (2$_n$), for
$n \in \n$, and using Ascoli-Arzela's theorem, we deduce that the
sequence of surfaces $\{ \Sigma_n \}_{n \in \n}$ converges to an
open immersed minimal surface $\Sigma$ in the $C^m$ topology, for
all $m \in \n$. Moreover, $\Sigma \cap \overline{\cd}_i$ is a normal
graph over its projection $\Sigma_{i,\infty} \subset \Sigma_i'$, for all $i$, and the
norm of the gradient of the graphing functions its at most $1$ (see
properties (2$_n$-a)).

Finally, we check that $\Sigma$ satisfies all the statements in the
theorem.

\noindent $\bullet$ {\em $\Sigma$ is properly immersed in $\cd$.} To
see this, we consider $K \subset \cd$ a compact subset. We have to
prove that $\Sigma \cap K$ is compact. As $\{ \cd_n \; : \; n \in
\n\}$ is an exhaustion of $\cd$, then we know that there exists $n_0
\in \n$ such that $K \subset \cd_{n_0}$. We also know that $\Sigma
\cap \overline{\cd}_{n_0}$ is a graph over $\Sigma_{n_0}$ which is
compact. Therefore $\Sigma \cap \overline{\cd}_{n_0}$ is compact and
$\Sigma \cap K$ is a closed subset compact set, consequently $\Sigma
\cap K$ is compact.

\noindent $\bullet$ {\em $\Sigma$ is complete.} Consider the compact
exhaustion $\Sigma \cap \cd_n$ of $\Sigma$ and note that $\Sigma
\cap \cd_n$ is quasi isometric to $\Sigma_{n, \infty}$ with respect
to constants that are independent of $n$. Then properties (3$_n$),
$n \in \n$,  trivially imply that $\Sigma$ is complete.

\noindent $\bullet$ {\em $\Sigma$ is homeomorphic to $M$.} If we
consider the exhaustions $\{\Sigma \cap \overline{\cd}_n \; | \; n
\in \n\}$ of $\Sigma$ and $\{ M_n \; | \; n \in \n \}$ of $M$, then
we know (from the way in which we have constructed $\Sigma$) that
$\Sigma \cap \overline{\cd}_n$ is homeomorphic to $M_n$. Label this
homeomorphism as $f_n \colon \Sigma \cap \overline{\cd}_n
\rightarrow M_n$.
\begin{center}
\begin{picture}(80,80)(0,0)
\put(8,2){\mbox{$M_i$}}
\put(75,2){\mbox{$M_n$}}
\put(77,50){\mbox{$\Sigma\cap \overline{\cd}_n$}}
\put(0,50){\mbox{$\Sigma\cap \overline{\cd}_i$}}
\put(25,6){\vector(1,0){45}}
\put(35,54){\vector(1,0){35}}
\put(14,45){\vector(0,-1){30}}
\put(81,45){\vector(0,-1){30}}
\put(18,28){\mbox{${f_n|}_{\Sigma\cap \overline{\cd}_i}$}}
\put(84,28){\mbox{$f_n$}}
\put(45,10){i}
\put(48,58){i}
\end{picture}
\end{center}
Moreover, we have that ${f_n|}_{\Sigma\cap \overline{\cd}_i}$ is
also a homeomorphism between $\Sigma\cap \overline{\cd}_i$ and $M_i$
which coincides with the corresponding homeomorphism $f_i$.  Then,
after taking the limit as $n\to \infty$, we conclude that $\Sigma$
and $M$ are homeomorphic.

\noindent {\bf Case 2.} $\cd $ {\em is a smooth strictly convex domain}.

First of all, we can assume, up to a suitable shrinking of $\cd$, that
$\kappa_1(\partial \cd)=1$. This time the proof is slightly
different from the previous case. Our aim is to create a sequence:
$$\Theta_n=\{t_n, \varepsilon_n, \delta_n, \cd_n, \Sigma_n \}_{n \in \n}, $$
where:
\begin{itemize}
\item $\{t_n\}_{n \in \n}$, $\{\varepsilon_n\}_{n \in \n}$, $\{\delta_n\}_{n \in \n}$,
are sequences of real numbers decreasing to $0$. Moreover,
$\displaystyle \sum_{n=i+1}^\infty \varepsilon_n < \delta_i$ for any
$i \in \n.$
\item $\cd_n\df \cd_{-t_n}$  is the convex domain parallel to $\cd$ at distance $t_n$.
\item $\Sigma_n$ is a compact, connected, minimal surface with nonempty boundary.
\end{itemize}

This sequence can be constructed in such a way so that it satisfies:
\begin{enumerate}[(1$_n$)]
\item $\vec 0 \in \Sigma_n$ and $\partial \Sigma_n \subset \partial \cd$;
\item \label{item:graph1}For $i=1, \ldots,n-1$, $\Sigma_n \cap \overline{\cd}_{i}$ is a normal
graph over its projection $\Sigma_{i,n} \subset \Sigma_{i}'$, where
$\Sigma_i'$ is a larger compact minimal surface containing
$\Sigma_i$ in its interior. Furthermore,  if we write
$\Sigma_n=\{p+f_{n,i}(p) \cdot N_i(p) \; | \; p \in \Sigma_{i,n}
\}$, where $N_i$ is the Gauss map of $\Sigma_{i,n}$, then:
\begin{enumerate}[(\ref{item:graph}$_n$- a)]
\item $ \displaystyle | \nabla f _{n,i} | \leq \sum_{k=i+1}^n \varepsilon_k,$ and
\item $\displaystyle \delta^H(\Sigma_n , \Sigma_{i}) \leq
\sum_{k=i+1}^n \varepsilon_k,\quad \mbox{for } i=1, \ldots,n-1.$
\end{enumerate}
where $\varepsilon_k>0$, for all $k$, and $\displaystyle
\sum_{k=1}^\infty \varepsilon_k<1.$
\item $\dist_{\Sigma_n}(\vec 0, \partial \Sigma_n) \geq \dist_{\Sigma_1}(\vec 0, \partial \Sigma_1)+n-1;$
\item Let $2 \, \delta_i \df \min_{j \neq k} \dist_{\rth}(C_j, C_k)$, where $C_j$ are
the connected components of $\Sigma_i\cap(\overline{\cd}-\cd_i)$. If
there is only one component in $\Sigma_i\cap(\overline{\cd}-\cd_i)$,
then we define $\delta_i \df 1/2$. If $C$ and $C'$ are two
different connected components of $\Sigma_n \cap
\left(\overline{\cd}-\cd_i \right)$, then the distance
$\dist_{\rth}(C,C')>\delta_i.$
\end{enumerate}

The sequence $\{\Theta_n\}_{n \in \n}$ is obtained in a recurrent
way. In order to define $\Sigma_1$, we consider  an analytic Jordan
curve $\Gamma_1$ in $\partial \cd$. We solve the Plateau problem for
this curve and let $\Sigma_1$ be the solution minimal disk. Up to a
translation in $\rth$, we can assume that $\vec 0 \in \Int(\Sigma_1)
\subset \cd.$

Suppose that we have constructed the term $\Theta_n$ in the
sequence. The idea is to apply Lemma~\ref{lem:adding-ends} or
 Lemma~\ref{lem:adding-handles} (depending on the topology of
$M_{n+1}-\Int(M_n)$) to produce the next minimal surface
$\Sigma_{n+1}$, like in the proof of Case 1. However, this time we
have to be more careful. First, we take $t_{n+1} \in (0,t_n)$ small
enough so that:
\begin{itemize}
\item $\Sigma_n$ intersects $\partial \cd_{-t_{n+1}}$ transversally and
$\Sigma_n \cap \overline{\cd}_{-t_{n+1}}$ contains a connected
component $\widehat \Sigma_n$ with the same topological type than
$\Sigma_n$ and satisfies  $$\dist_{\widehat \Sigma_n}(\vec 0,\partial
\widehat \Sigma_n)\geq \dist_{\Sigma_1}(\vec 0, \partial
\Sigma_1)+n-1.$$

\item The constant $C(\varepsilon',\cd_{-t_{n+1}},\cd)=\varepsilon'+
\sqrt{(t_{n+1}+2 \varepsilon'+1)^2-1}<\varepsilon_{n+1}$ for
$\varepsilon'$ sufficiently small.
\end{itemize}
Then apply one of the lemmas to the data $ \Sigma_n$, $\cd_{n+1}$
and $\cd$. In this way, we obtain the new immersion $\Sigma_{n+1}$
satisfying properties (1$_{n+1}$) to (4$_{n+1}$). Let us check
(4$_{n+1}$). Take $ C$ and $C'$ two components of $\Sigma_{n+1}\cap
(\overline{\cd}-\cd_i)$. Then $C$ and $C'$ lie in tubular
neighborhoods of radius $\sum_{k=i+1}^{n+1}\varepsilon_k$ of some
components of $\Sigma_{i}\cap (\overline{\cd}-\cd_i)$, that we label
$\widetilde C$ and $\widetilde C',$ respectively. Then one has
$$ \dist_{\rth}(C,C') \geq \dist_{\rth}(\widetilde C, \widetilde C') -
\sum_{k=i+1}^{n+1}\varepsilon_k > 2 \delta_i-
\sum_{k=i+1}^{n+1}\varepsilon_k >\delta_i.$$

If $\sum_{k={n+2}}^\infty \varepsilon_k \geq \delta_{n+1}$, then we
modify the sequence $\{\varepsilon_n\}_{n\in \n}$ as follows:
\begin{itemize}
\item $\varepsilon_k'=\varepsilon_k$, for $k=1, \ldots, n+1$;
\item $\varepsilon_k'=\delta_{n+1} \, \varepsilon_k$, for $k>n+1$.
\end{itemize}
At this point in the proof, we have obtained a sequence of compact
minimal surfaces $\{ \Sigma_n\}_{n \in \n}$ with regular boundary in
$\partial \cd$, whose interiors converge smoothly on compact sets of
$\cd$ to a complete minimal surface $\Sigma$, properly immersed in
$\cd$. As in the previous step, we have that $\Sigma \cap
\overline{\cd_i}$ is homeomorphic to $M_i$, for all $i \in \n$, and
for each $i \in \n$, $\Sigma \cap \overline{\cd_i}$ is a small graph
over $\Sigma_i$. Furthermore, properties (4$_n$), $n \in \n$, imply
that  the distances between any two components of $\Sigma \cap (\cd
-\cd_i)$ are larger than $\delta_i$. Note that two different ends
$e_1$, $e_2$ of $\Sigma$ can be represented by distinct components
$C_1$, $C_2$ of $\Sigma-\cd_j$, for some $j$ sufficiently large. By
Definition~\ref{def:limit},  the distance between $L(e_1)$ and
$L(e_2)$ is at least equal to the distance between $C_1$ and $C_2$
which is greater than $\delta_j$. This completes the proof of Case
2.

\noindent {\bf Case 3}. {\em $\cd$ is a smooth bounded domain.}

 In this
case we take $U$ to be an open disk in $\partial \cd$ so that the
principal curvatures with  respect to the inner pointing normals are
positive and bounded away from zero. Then, it is possible to find a
smooth convex domain $\cd_U \subset \cd$ with $U \subset \partial
\cd_U$. Then we consider the curve $\Gamma_1 \subset U$ as in the
previous case, and we solve the classical Plateau problem to obtain
a compact minimal disk $\Sigma_1$. We take the series
$\sum_{k=1}^\infty \ve_k$ to satisfy:
$$\sum_{k=1}^\infty \ve_k<\frac 12 \dist_{\rth}(\Sigma_1, \partial \cd-U).$$
Thus, we apply Case 2 to obtain a complete minimal surface $\Sigma$
satisfying the conclusions of the theorem for the domain $\cd'$ and
the limit set of $\Sigma$ is contained in $U$. Then the surface
$\Sigma$ is also properly immersed in $\cd.$ This concludes the
proof of the theorem.
\end{proof}

Suppose $M$ is a proper minimally  immersed open surface in $\rth$
and passes through the origin. After a small translation of $M$
assume that $M$ is transverse to the boundary sphere of the balls
$\b (n)$ of radius $n$, $n\in \n$.  Then the maximum principle
implies that the exhaustion $\{M_n=M\cap \b (n)\}$ of $M$ is a
smooth compact exhaustion where for all $$n\in \n, \quad M-
\Int(M_n) \mbox{ \; has no compact components.}$$ We will call a
smooth compact exhaustion $M_1 \subset  M_2 \subset \cdots \subset
M_n\subset \cdots$  {\it admissible} if it satisfies the above
property.  The next result is an immediate corollary of
Theorem~\ref{th:first}.
\begin{theorem} \label{th:rth}
Let $M$ be an open orientable surface with an admissible exhaustion
$M_1 \subset M_2 \subset \cdots M_n \subset \cdots$. There exists a
proper minimal immersion $f\colon M \longrightarrow \rth$ satisfying
 $f(M_n) = f(M) \cap \b(n)$.
 \end{theorem}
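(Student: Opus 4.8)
The plan is to deduce Theorem~\ref{th:rth} from item~(2) of Theorem~\ref{th:first}, applied with $\cd=\rth$ and with the exhaustion of $\rth$ by round balls. Note that $\rth$ is convex and that the balls $\b(r)$, $r>0$, are bounded and strictly convex. Hence, if the prescribed admissible exhaustion $\{M_n\}_{n\in\n}$ of $M$ were already a simple exhaustion, then item~(2) of Theorem~\ref{th:first}, used with the smooth exhaustion $\{\b(n)\}_{n\in\n}$ of $\rth$, would at once furnish a complete, proper minimal immersion $f\colon M\to\rth$ with $f(M_n)=f(M)\cap\overline{\b(n)}$ for all $n$, which is the stated conclusion (up to the irrelevant distinction between the ball and its closure, the boundary sphere carrying $f(\partial M_n)$). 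Thus everything reduces to replacing the given admissible exhaustion by a simple one.

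The main step, then, is the purely topological assertion that any admissible exhaustion $\{M_n\}$ of an open orientable surface $M$ can be refined to a simple exhaustion $\{\widetilde M_m\}_{m\in\n}$ of $M$ in which each original term reappears, say $M_n=\widetilde M_{m_n}$ for a strictly increasing sequence $m_1<m_2<\cdots$. To establish this I would argue just as in the proof of Lemma~\ref{lem:simple}, the only difference being that one now interposes the new intermediate terms between consecutive $M_n$'s (and places a disk $\widetilde M_1$, together with a regular neighborhood of suitably chosen arcs, inside $M_1$ to start the exhaustion) rather than rebuilding an arbitrary exhaustion from scratch: between $M_n$ and $M_{n+1}$ one first attaches regular neighborhoods of embedded arcs to arrange connectedness and that every boundary curve separates $M$, and then peels off one pair of pants or one annulus-with-a-handle at a time, all remaining pieces being annuli, so that property~3 in the definition of a simple exhaustion is met. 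The admissibility hypothesis, that $M-\Int(M_n)$ has no compact component for every $n$, is exactly what permits these interpositions to be carried out without ever creating compact complementary components (compare Assertion~\ref{as:three}); I expect this bookkeeping to be the only real work.

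Granting the refinement, I would finish as follows. Choose radii $0<r_1<r_2<\cdots$ with $r_m\to\infty$ and $r_{m_n}=n$ for every $n$ (any strictly increasing interpolation of the prescribed values at the indices $m_n$ works); then $\{\b(r_m)\}_{m\in\n}$ is a smooth exhaustion of $\rth$ by bounded strictly convex domains. Applying item~(2) of Theorem~\ref{th:first} to the simple exhaustion $\{\widetilde M_m\}$ of $M$ and to this exhaustion of $\rth$ yields a complete, proper minimal immersion $f\colon M\to\rth$ with $f(\widetilde M_m)=f(M)\cap\overline{\b(r_m)}$ for all $m$. Specialising to $m=m_n$ gives $f(M_n)=f(M)\cap\overline{\b(n)}=f(M)\cap\b(n)$, which is the assertion of Theorem~\ref{th:rth}; completeness and properness of $f$ are inherited verbatim from Theorem~\ref{th:first}.
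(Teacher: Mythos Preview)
Your reduction to item~(2) of Theorem~\ref{th:first} via a refinement of the given admissible exhaustion to a simple one that still contains every $M_n$ is the natural idea, but the refinement step fails in general. In a simple exhaustion every term is connected and, as the paper remarks right after the definition, each component of $M-\Int(\widetilde M_m)$ is bounded by a \emph{single} simple closed curve; in particular every boundary curve of every term separates $M$. Admissibility imposes neither of these conditions. For a concrete obstruction, let $M$ be a once-punctured torus and take $M_1$ to be a closed annular neighborhood of a nonseparating simple closed curve. Then $M-\Int(M_1)$ is a single noncompact component with two boundary circles, so the exhaustion can be continued to an admissible one; but since both boundary curves of $M_1$ are nonseparating in $M$, this $M_1$ can never occur as a term $\widetilde M_{m_1}$ of any simple exhaustion. (Concretely, passing from $M_1$ to any larger compact surface whose complement has only the one end of $M$ forces a component of the difference to meet both boundary circles of $M_1$, violating property~2.) Likewise, an admissible exhaustion may have disconnected terms, e.g.\ $M=\r^2$ with $M_1$ the union of two disjoint closed disks, and no such term can appear in a simple exhaustion either. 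Your sentence about ``attaching regular neighborhoods of embedded arcs to arrange connectedness and that every boundary curve separates $M$'' would \emph{modify} the $M_n$, which is exactly what you are not allowed to do.

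The analytic counterpart of this obstruction is that Lemmas~\ref{lem:adding-ends} and~\ref{lem:adding-handles} attach a pair of pants or a handle having a \emph{single} boundary curve on the inner level $\partial\cd$; they provide neither the ``inverted'' pair of pants that merges two boundary components of $\Sigma_n$ into one, nor a bridge joining two components of a disconnected $\Sigma_n$. To salvage the argument you would have to either (i) add to the hypotheses that each $M_n$ is connected with every boundary curve separating $M$---under which your refinement does go through, essentially by rerunning Assertion~\ref{as:four} between consecutive $M_n$'s---or (ii) prove one further bridge lemma (Theorem~\ref{th:bridge} certainly allows the arc $\Gamma$ to join two distinct boundary components of $\widehat M'$) covering these two extra elementary moves, and redo Case~1 of Theorem~\ref{th:first} with the enlarged toolkit. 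As written, the ``purely topological'' refinement cannot be carried out, so the proof has a genuine gap at that step.
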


The question concerning the existence of complete proper minimal
surfaces
 in the unit ball $\b(1)$ such that the limit sets are the entire unit sphere $\s^2(1)$
 was proposed to the second author by Nadirashvili in 2004. The
 techniques used to prove Theorem~\ref{th:first} allow us to give a positive
 answer to this former question.

 \begin{proposition} \label{prop:nadiq} Let $M$ be an open orientable surface and $\cd$ a
 convex open domain. Then there exists a complete proper minimal
 immersion $f:M \to \cd$ such that the limit set of $f(M)$ is $\partial \cd$.
 \end{proposition}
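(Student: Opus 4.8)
The plan is to rerun the inductive construction in the proof of Theorem~\ref{th:first}, Case~1 (which already covers an arbitrary convex domain, through a smooth strictly convex exhaustion), adding the extra requirement that the boundary curves of the approximating compact minimal surfaces sweep out a dense subset of $\partial\cd$. If $\cd=\rth$ there is nothing to prove, since a proper immersion into $\rth$ has empty limit set and $\partial\rth=\emptyset$; so assume $\partial\cd\neq\emptyset$, fix a countable dense set $\{q_j\}_{j\in\n}\subset\partial\cd$, and a surjection $\sigma\colon\n\to\n$ attaining every value infinitely often. I would first record two elementary facts about the limit set $L:=\overline{f(M)}\setminus f(M)$ (closure in $\rth$) of \emph{any} complete proper minimal immersion $f\colon M\to\cd$: $L\subseteq\partial\cd$ and $L$ is closed in $\rth$. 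Both follow from properness: if $p\in\cd$ then $f^{-1}(\overline{B(p,\rho)})$ is compact for small $\rho$, so no sequence leaving every compact subset of $M$ can have image clustering at $p$, whence $L\subseteq\partial\cd$; and then $L=\overline{f(M)}\cap\partial\cd$ is closed. Consequently it suffices to construct $f$ with $q_j\in L$ for all $j$, since a dense closed subset of $\partial\cd$ is all of $\partial\cd$.

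Next I would run the Case~1 construction: a simple exhaustion $\{M_n\}$ of $M$, a smooth strictly convex exhaustion $\{\cd_n\}$ of $\cd$, and compact minimal surfaces $\Sigma_n$ with $\partial\Sigma_n\subset\partial\cd_n$ and $\Sigma_n\cong M_n$, with $\Sigma_{n+1}\cap\overline{\cd}_n$ a $C^\infty$ graph over $\Sigma_n$ that is Hausdorff $\bigl(\sum_{k>n}\varepsilon_k\bigr)$-close to it, converging to a complete proper minimal $\Sigma\cong M$ in $\cd$. The one new ingredient is the choice made in the step $\Sigma_n\rightsquigarrow\Sigma_{n+1}$: that step produces the piece $\Sigma_{n+1}-\cd_n$ inside the shell $\cd_{n+1}\setminus\cd_n$ by solving auxiliary Plateau problems and attaching thin bridges (Lemmas~\ref{lem:adding-ends}/\ref{lem:adding-handles} at a topology-changing stage, Lemma~\ref{lem:afm} at an annular one), and the only property of that piece used afterwards is that it meets $\partial\cd_{n+1}$ transversally in $\partial\Sigma_{n+1}$. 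I would exploit the remaining freedom in those Plateau problems and bridges — which is exactly the ``density'' content of Lemma~\ref{lem:afm} inherited from \cite{density}, as opposed to its bare existence statement — to force a component of $\partial\Sigma_{n+1}$ to pass within $\varepsilon_{n+1}$ of the point of $\partial\cd_{n+1}$ nearest to $q_{\sigma(n+1)}$. Since all of this is carried out inside the shell, properties (1$_n$)--(3$_n$) over $\overline{\cd}_n$ and the homeomorphism type of $\Sigma_{n+1}$ are untouched, and $\Sigma$ keeps everything established in Theorem~\ref{th:first}. I would additionally keep $\sum_{k>n}\varepsilon_k<\dist(\overline{\cd}_{n-1},\partial\cd_n)$ for every $n$, which is possible because the right-hand side is fixed before $\varepsilon_{n+1},\varepsilon_{n+2},\dots$ are chosen.

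To conclude I would check $q_j\in L$ for each $j$. Along the subsequence $\{n_i\}$ with $\sigma(n_i)=j$ one has $\dist(q_j,\partial\cd_{n_i})\to 0$ (as $\cd_{n_i}\nearrow\cd$ and $q_j\in\partial\cd$), so there are $p_{n_i}\in\partial\Sigma_{n_i}$ with $|p_{n_i}-q_j|\to 0$. Since $\Sigma\cap\overline{\cd}_{n_i}$ is a $C^\infty$ graph over $\Sigma_{n_i}$ that is Hausdorff $\bigl(\sum_{k>n_i}\varepsilon_k\bigr)$-close to it, there is $\widetilde p_{n_i}\in\Sigma$ with $|\widetilde p_{n_i}-p_{n_i}|\le\sum_{k>n_i}\varepsilon_k$, hence $\widetilde p_{n_i}\to q_j$; and since $p_{n_i}\in\partial\cd_{n_i}$ while $|\widetilde p_{n_i}-p_{n_i}|<\dist(\overline{\cd}_{n_i-1},\partial\cd_{n_i})$, the point $\widetilde p_{n_i}$ lies outside $\overline{\cd}_{n_i-1}$. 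A preimage $x_{n_i}\in M$ of $\widetilde p_{n_i}$ therefore leaves every compact subset of $M$ while $f(x_{n_i})\to q_j$, so $q_j\in L$. Thus $L$ is dense in $\partial\cd$, and $L=\partial\cd$.

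The main obstacle is precisely the steering in the second paragraph: one must verify that the shell piece produced by Lemma~\ref{lem:afm} (and, above it, by Lemmas~\ref{lem:adding-ends}--\ref{lem:adding-handles}) retains enough flexibility to drag a boundary component of $\Sigma_{n+1}$ arbitrarily close to a prescribed point of $\partial\cd_{n+1}$ while staying properly immersed, of the correct topological type, transverse to $\partial\cd_{n+1}$, and a graph over $\Sigma_n$ on $\overline{\cd}_n$; this is where one leans on the density theorem of \cite{density}. Everything else is a routine diagonalization built on the proof of Theorem~\ref{th:first}, and the same steering carried out throughout $\cd$ rather than only near $\partial\cd$ would in fact make $f(M)$ dense in $\cd$.
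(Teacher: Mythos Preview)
Your overall scheme is sound and the diagonalization in the last paragraph is clean, but the route differs from the paper's and you locate the steering mechanism in the wrong lemma.

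The paper's proof is a one-line modification of Lemmas~\ref{lem:adding-ends} and~\ref{lem:adding-handles}: in Step~3 of each, the bridge arc $\Gamma\subset\partial\cd'$ (along which the thin bridge is attached to $\widehat M'$) is replaced by a smooth arc that is $\varepsilon$-close to \emph{every} point of $\partial\cd'$. With this change, each $\partial\Sigma_n$ is automatically $\tfrac1n$-dense in $\partial\cd_n$, and $L(\Sigma)=\partial\cd$ follows immediately---no dense set $\{q_j\}$, no surjection $\sigma$, no diagonal extraction. Your scheme is more frugal at each step (steer near a single point) and pays for it with bookkeeping; the paper's does the opposite trade and is shorter.

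On the steering itself: you attribute the needed freedom to ``the `density' content of Lemma~\ref{lem:afm}'', but Lemma~\ref{lem:afm} as stated here carries no parameter letting you prescribe where the boundary of $M_\varepsilon\cap\overline{\cd'}$ lands. The genuine degree of freedom is the choice of the bridge arc $\Gamma$ in Step~3 of Lemmas~\ref{lem:adding-ends}/\ref{lem:adding-handles} (governed by Theorem~\ref{th:bridge}): since $\Gamma$ may be taken to be any smooth arc in $\partial\cd'$ meeting $\partial\widehat M'$ only at its endpoints, you can route it past your target point, and the attached bridge---hence $\partial\Sigma_{n+1}$---follows it to within $O(w_n)$. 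Once you relocate the mechanism there, your one-point steering works; but at that point the paper's all-at-once version is no harder and avoids the extra $\varepsilon$-management you introduce with $\sum_{k>n}\varepsilon_k<\dist(\overline{\cd}_{n-1},\partial\cd_n)$.
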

 The proof of the above proposition consists of a suitable use of the
 bridge principle in the proof of Lemmas~\ref{lem:adding-ends} and
 \ref{lem:adding-handles}. In this case the curve $\Gamma$ used in
 Step 3 in both lemmas is substituted by a smooth arc in
 $\partial \cd'$ which is  $\varepsilon$ close to every point of $\partial \cd'$.
 With these new versions of the lemmas we can modify the proof of Case 1
 (when $\partial \cd$ is convex)  as follows:
  we construct the sequence
 $\{ \Sigma_n\}_{n \in \n}$  in such a way that $\partial \Sigma_n$ is
 $\frac 1 n$ close to every point in $\partial
 \cd_n$. So, the limit immersion $\Sigma$ would satisfy that its limit set
 $L(\Sigma)$ is $\partial \cd.$

 As a consequence of Proposition~\ref{prop:nadiq}, we obtain the following
 result.
 \begin{corollary} Any convex domain of $\r^3$ is the convex hull of
 some complete minimal surface.
 \end{corollary}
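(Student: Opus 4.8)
The plan is to obtain this as a short corollary of Proposition~\ref{prop:nadiq} together with an elementary fact about convex sets. Given a convex domain $\cd\subset\r^3$, I would first fix an open orientable surface $M$ (of infinite topology, say, so that the surface produced is not planar) and apply Proposition~\ref{prop:nadiq} to obtain a complete, proper minimal immersion $f\colon M\to\cd$ whose image has limit set $L(f(M))=\partial\cd$; by inspecting the construction one may also arrange $\vec 0\in f(M)\subset\cd$. Let $\mathcal C=\operatorname{conv}(f(M))$ denote the convex hull of the image. The claim is that $\mathcal C=\cd$, and the two inclusions come from the two main features of $f$.

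One inclusion, $\mathcal C\subset\cd$, is immediate: $\cd$ is convex and contains $f(M)$, hence contains every convex combination of points of $f(M)$. For the reverse inclusion I would use $L(f(M))=\partial\cd$. Since the limit set is contained in $\overline{f(M)}$, we have $\partial\cd\subset\overline{f(M)}\subset\overline{\mathcal C}$, and $\overline{\mathcal C}$ is a closed convex set with $\operatorname{conv}(\partial\cd)\subset\overline{\mathcal C}\subset\overline{\cd}$. When $\cd$ is bounded, $\partial\cd$ is not contained in any plane, so $\overline{f(M)}$ and hence $f(M)$ fail to be planar and $\mathcal C$ has nonempty interior; for a convex set with nonempty interior one has $\operatorname{int}\mathcal C=\operatorname{int}\overline{\mathcal C}$, and $\operatorname{conv}(\partial\cd)=\overline{\cd}$ for a bounded convex body. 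Hence $\overline{\mathcal C}=\overline{\cd}$, so $\operatorname{int}\mathcal C=\cd$, and combined with $\mathcal C\subset\cd$ this forces $\mathcal C=\cd$.

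The one place where care is genuinely needed --- and the main obstacle --- is the case of an unbounded $\cd$, in particular $\cd=\r^3$, since then $\partial\cd$ by itself need not have convex hull $\overline{\cd}$ (a half-space is the extreme case, and $\partial\r^3=\varnothing$). Here I would additionally invoke that $f$ is proper and that, in the notation of the proof of Theorem~\ref{th:first}, the approximating compact surfaces $\Sigma_n$ satisfy $\partial\Sigma_n\subset\partial\cd_n$ for an exhaustion $\cd_n\nearrow\cd$ by bounded strictly convex domains; this forces $f(M)$ to meet every $\cd_n$ and to fill out $\cd$ enough that still $\operatorname{conv}(\overline{f(M)})=\overline{\cd}$, after which the elementary argument of the previous paragraph closes the proof. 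In short, all the substantive content lies in Proposition~\ref{prop:nadiq} and in Lemmas~\ref{lem:adding-ends}--\ref{lem:adding-handles}; the corollary itself is essentially a convexity bookkeeping exercise.
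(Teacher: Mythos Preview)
Your approach is exactly the paper's: the corollary is stated there without any proof, simply as an immediate consequence of Proposition~\ref{prop:nadiq}, and you have supplied the convexity bookkeeping that the paper omits. Your bounded case is correct; for the unbounded case the one detail your sketch does not make explicit is that merely having $\partial\Sigma_n\subset\partial\cd_n$ is not enough---what makes the argument go through is the specific feature of the construction in Proposition~\ref{prop:nadiq} that $\partial\Sigma_n$ is chosen $\tfrac1n$-close to every point of $\partial\cd_n$, which together with $\delta^H(\Sigma\cap\overline{\cd_n},\Sigma_n)<\sum_{k>n}\varepsilon_k$ gives $\operatorname{conv}(\Sigma)\supset(\cd_n)_{-c_n}$ for some $c_n\to 0$, and these inner parallel bodies exhaust $\cd$.
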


\section{Nonorientable minimal surfaces} \label{sec:no} The main goal of
this section is to develop the necessary theory for dealing with
complete, properly immersed or embedded, nonorientable minimal
surfaces in domains in $\rth$. First we explain how to modify
arguments in the proof of the Density Theorem in \cite{density} to
the case of nonorientable surfaces, i.e., given a compact
nonorientable surface $M$, we describe how to approximate it by a
complete, nonorientable hyperbolic surface $\wt{M}$ which is
homeomorphic to the interior of $M$. Once this generalization of the
Density Theorem is seen to hold, we apply it to prove that Theorem
\ref{th:first} holds for nonorientable surfaces, which then
completes the proof of Theorem~\ref{th:limit} stated in the
Introduction.

Since one of the goals in the Embedded Calabi-Yau Conjecture is to
construct nonorientable, properly embedded minimal surfaces in
bounded domains of $\rth$, we construct in Section~\ref{sec:6.3}
complete, proper minimal immersions of any open surface $M$ with a
finite number of nonorientable ends into some certain smooth
nonsimply connected domain such that distinct ends of $M$ have
disjoint limit sets and such that the immersed surface is properly
isotopic to a proper (incomplete) minimal embedding of $M$ in the
domain. In Example~\ref{ex:universal}, we construct a bounded domain
$\cd_\infty$ in $\rth$ which is smooth except at one point
$p_\infty$  and has the property that every open surface $M$ admits
a complete, proper minimal immersion  $f:M\to\cd_\infty$ which can
be closely approximated in the Hausdorff distance by a proper,
noncomplete, minimal embedding of $M$ in  $\cd_\infty$.

\subsection{Density theorems for nonorientable minimal surfaces} \label{subsec:non}
The results contained in \cite{density} remain true when the minimal
surfaces involved in the construction are nonorientable. In order to
obtain a result similar to Lemma~\ref{lem:afm} in the nonorientable
setting, we work with the orientable double covering. But then all
the machinery must be adapted in order to be compatible with the
antiholomorphic involution of the change of sheet in the orientable
covering. In verifying this construction, there are three points
that are nontrivial and they are explained in paragraphs
\ref{subsubsec:Runge}, \ref{subsubsec:I}, and \ref{subsubsec:h}
below.

First, we need some notation. Let  $M'$  denote a connected
compact Riemann surface of genus $\sigma \in\n \cup \{0\}.$ Let $ I:M'
\rightarrow M'$ be an antiholomorphic involution without fixed
points. Then, the surface $\widetilde M' \df M'/\langle I \rangle$ is a
compact connected nonorientable surface.

For $\textsc{e}\in\n$, consider $\d_1,\ldots,\d_\textsc{e}\subset M'$
open disks so that $\{\g_i\df\partial \d_i, \; \; i=1, \ldots, \textsc{e}\}$ are piecewise
smooth Jordan curves and $\overline{\d}_i\cap \overline{\d}_j=\emptyset$
for all $i\neq j$.

\begin{definition}\label{multicycle}
Each curve $\g_i$ will be called a cycle on $M'$ and the family
$\mathcal{ J}=\{\g_1,\ldots,\g_\textsc{e}\}$ will be called a {\bf
multicycle} on $M'$. We denote by $\intc(\g_i)$ the disk $\d_i$, for
$i=1,\ldots, \textsc{e}.$ We also define $M(\mathcal{
J})=M'-\cup_{i=1}^\textsc{e} \overline{\intc(\g_i)}$. Notice that
$M(\mathcal{ J})$ is always connected.

We will say that $\cj$ is {\bf invariant under $I$} iff for any disk
$\d_i$ there exist another disk in the family $\d_j$ such that
$I(\d_i)=\d_j.$ Observe that $i \neq j$ and so the number of cycles
in $\cj$ is even in this case.
\end{definition}

Given $\mathcal{ J}=\{\g_1,\ldots,\g_\textsc{e}\}$ and $\mathcal{
J}'=\{\g_1',\ldots,\g_\textsc{e}'\}$ two multicycles in $M'$, we
write $\mathcal{ J}'< \mathcal{ J}$ if $\overline{\intc(\g_i)}
\subset \intc (\g_i')$ for $i=1,\ldots, \textsc{e}.$ Observe that
$\mathcal{J}'<\mathcal{J}$ implies $\overline{M(\mathcal{ J}')}
\subset M(\mathcal{ J})$.

\subsubsection{Runge functions on nonorientable minimal surfaces} \label{subsubsec:Runge}
Runge-type theorems are crucial in obtaining the theorems for
orientable surfaces obtained previously in \cite{density}. So, the
first step in the proof of Lemma~\ref{lem:afm} in the nonorientable
case consists of proving a suitable Runge theorem for nonorientable
minimal surfaces. To be more precise, we need the following.

\begin{lemma}\label{lem:rungeno}
Let $\cj$ be a multicycle in $M'$which is invariant under $I$ and
let $F: \overline{M(\cj)} \rightarrow \r^3$ be a nonorientable
minimal immersion with Weierstrass data $(g,\Phi_3)$\footnote{Recall
that $g \circ I=-1/\overline{g}$, $I^*\Phi_3=\overline{\Phi_3}$.}.
Consider $K_1$ and $K_2$ two disjoint compact sets in $M(\cj)$ and
$\Delta \subset M'$ satisfying:
\begin{enumerate}[(a)]
\item There exists a basis of the homology of $M(J)$ contained in $K_2$ and $I(K_2)=K_2$;
\item $\overline{\Delta} \subset M'-(K_1 \cup I(K_1) \cup K_2)$ and $I(\Delta)=\Delta$;
\item $\Delta$ has a point in each connected component of $M'-(K_1 \cup I(K_1) \cup K_2 ).$
\end{enumerate}
Then, for any $m \in \n$ and any $t>0$, there exists a holomorphic
function without zeros $H: M(\cj)-\Delta \rightarrow \c$ such that:
\begin{enumerate}[(1)]
\item $H\circ I=1/\overline{H};$
\item $|H-t|<1/m$ in $K_1;$
\item $|H-1|<1/m$ in $K_2;$
\item The nonorientable minimal immersion given by the Weierstrass
data $\widetilde g\df g/H$ and $\widetilde \Phi_3:=\Phi_3$ is
well-defined (has no real periods.)
\end{enumerate}
\end{lemma}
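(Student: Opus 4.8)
The plan is to reduce the statement to the orientable Runge-type theorem already available in \cite{density}, by lifting everything to the double cover and then projecting the resulting function down in an $I$-equivariant way. First I would invoke the orientable Runge approximation theorem from \cite{density} on the orientable Riemann surface $M(\cj)$: since $\cj$ is invariant under $I$, the compact sets $K_1\cup I(K_1)$, $K_2$ and the set $\Delta$ are all genuine compact (resp.\ closed) subsets of $M(\cj)$ with the right separation properties, and hypotheses (a)--(c) guarantee that $M(\cj)-\Delta$ retracts onto a set carrying the homology of $M(\cj)$ and that the three ``islands'' $K_1$, $I(K_1)$, $K_2$ sit in distinct components of the complement of $K_1\cup I(K_1)\cup K_2$. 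Applying the orientable result produces a holomorphic nowhere-zero function $H_0\colon M(\cj)-\Delta\to\c$ with $|H_0-t|$ small on $K_1$, $|H_0-1/t|$ small on $I(K_1)$ (this is the key asymmetry — we prescribe the \emph{reciprocal} target on the image of $K_1$ under $I$), and $|H_0-1|$ small on $K_2$, and such that the perturbed Weierstrass data $g/H_0$, $\Phi_3$ has no real periods. The freedom to kill periods while prescribing values on three disjoint compacta is exactly what the cited Runge/period-domination machinery of \cite{density} gives.

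The second and central step is to symmetrize $H_0$ so that it satisfies the relation $H\circ I = 1/\overline{H}$ of item (1). I would set
$$
H \df \left( H_0 \cdot \frac{1}{\overline{H_0\circ I}} \right)^{1/2},
$$
noting first that the function $H_0\cdot\overline{H_0\circ I}^{-1}$ is holomorphic (the conjugate of an antiholomorphic pullback is holomorphic) and nowhere zero on the $I$-invariant domain $M(\cj)-\Delta$, so a holomorphic square root exists on each component after checking that its period around each loop is trivial in $\c^*/(\c^*)^2$; here one uses that $H_0$ was chosen with controlled periods, shrinking $1/m$ if necessary and absorbing the obstruction into the period-killing step (this is where I expect to do a little work rather than quote a black box). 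By construction $H\circ I\cdot\overline H = 1$, giving (1). Because $|H_0-t|<\epsilon$ on $K_1$ and $|H_0\circ I - 1/t|=|H_0-1/t|\circ I<\epsilon$ on $K_1$ as well (evaluate at $I(x)$ for $x\in K_1$), the product $H_0\cdot\overline{H_0\circ I}^{-1}$ is within $O(\epsilon)$ of $t\cdot\overline{(1/t)}^{-1}=t^2$ on $K_1$, hence its square root is within $O(\epsilon)$ of $t$ on $K_1$ after choosing the correct branch; similarly $H$ is $O(\epsilon)$-close to $1$ on $K_2$ since $|H_0-1|$ and $|H_0\circ I-1|$ are both small there ($K_2$ being $I$-invariant). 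Tracking $\epsilon$ versus $1/m$ then yields items (2) and (3).

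Finally, for item (4) I would observe that $\widetilde g\df g/H$ together with $\widetilde\Phi_3\df\Phi_3$ automatically satisfies the reality conditions $\widetilde g\circ I = -1/\overline{\widetilde g}$ and $I^*\widetilde\Phi_3=\overline{\widetilde\Phi_3}$ defining a nonorientable immersion: the first follows from $g\circ I=-1/\overline g$ combined with $H\circ I=1/\overline H$, and the second is inherited from $F$. The remaining point is the vanishing of the real periods of the Weierstrass one-forms $\frac12(\widetilde g^{-1}-\widetilde g)\widetilde\Phi_3$, $\frac{\ri}{2}(\widetilde g^{-1}+\widetilde g)\widetilde\Phi_3$, $\widetilde\Phi_3$; by the $I$-equivariance just checked, it suffices to kill periods over an $I$-invariant basis of $H_1(M(\cj)-\Delta;\z)$, and this is arranged within the orientable Runge step by the standard implicit-function-theorem argument of \cite{density} applied to the parameters defining $H_0$ (one has enough free parameters near $K_2$, which carries the homology, to dominate the finitely many period functionals). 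I expect the main obstacle to be precisely this coordination: ensuring that the same choice of $H_0$ simultaneously (i) prescribes reciprocal boundary values on $K_1$ and $I(K_1)$, (ii) has square-root-trivial periods, and (iii) leaves the minimal immersion period-closed — all three must be handled inside one application of the equivariant version of the \cite{density} machinery rather than sequentially, since later adjustments would destroy earlier estimates.
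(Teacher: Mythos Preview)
Your symmetrization idea --- form $H=(H_0/\overline{H_0\circ I})^{1/2}$ from a non-equivariant solution $H_0$ --- is natural, and at the level of logarithms it is exactly the projection $T\mapsto \tfrac12(T-\overline{T\circ I})$ onto the space $\mathcal{H}^-=\{t:\ t\circ I=-\overline t\,\}$ that the paper uses. The gap is at item (4). Solving the period problem for $g/H_0$ via the orientable machinery of \cite{density} does \emph{not} transfer to $g/H$: the symmetrization alters $H_0$, and the real periods of the new Weierstrass one-forms are no longer controlled. You acknowledge this in your last paragraph, and your proposed remedy --- run the implicit-function-theorem argument ``inside one application of the equivariant version of the \cite{density} machinery'' --- is correct, but it is not a reduction to the orientable lemma; it \emph{is} the work. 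Concretely, one must show that the period map, restricted to parameters lying in $\mathcal{H}^-$, is still surjective onto $\r^{2(\sigma+\E)}$, and this surjectivity (the paper's Assertion~\ref{sobre}) is the technical heart of the proof. It rests in turn on an $I$-adapted homology basis and on the existence of symmetric differentials with prescribed periods (Assertion~\ref{homologia}), neither of which follows from quoting the orientable result as a black box.

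The paper's route is accordingly different in structure: rather than symmetrize \emph{a posteriori}, it works in $\mathcal{H}^-$ from the outset. The approximating function is written as $H=\exp\bigl(\lambda_0\, t_0^n+\sum_j\lambda_j\, t_j\bigr)$ with all $t_j\in\mathcal{H}^-$, so the relation $H\circ I=1/\overline H$ is automatic and the square-root obstruction you worry about never arises. The function $t_0^n$ is obtained by the \emph{additive} symmetrization $t_0^n=\tfrac12(T_0^n-\overline{T_0^n\circ I})$ of a Royden--Runge approximant $T_0^n$ (this is where conditions (b), (c) on $\Delta$ are used), and the $t_1,\ldots,t_{2(\sigma+\E)}$ come from the surjectivity assertion above; the period problem is then closed by the implicit function theorem entirely within $\mathcal{H}^-$. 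In short, your outline points at the right equivariant framework but stops exactly where the content begins; the paper supplies that content directly rather than by reduction.
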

\begin{proof}
If $\sigma$ represents the genus of $M'$ and $2 \E$ is the number of
cycles in $\cj$, notice that the dimension of $H_1(M(\cj),\r)$ is $2
\sigma+2 \E-1.$

\begin{assertion}
There exists a basis for the first real homology group of $M(\cj)$ $$B=
\{\gamma_1, \ldots,\gamma_{\sigma+\E},\Gamma_1, \ldots,
\Gamma_{\sigma+\E-1}\},$$ which is contained in $K_2$ and satisfies:
\begin{itemize}
\item $I_*(\gamma_j)=\gamma_j$, for $j=1, \ldots , \sigma+\E$,
\item $I_*(\Gamma_j)=-\Gamma_j$, for $j=1, \ldots , \sigma+\E-1$.
\end{itemize}
\end{assertion}
The proof of this assertion is a standard  topological argument that
can be found in \cite{LMM-Nonorientable}, for instance.
\begin{assertion}
If $\tau$ is a holomorphic differential in $M(\cj)$ satisfying
$I^*(\tau)= \overline{\tau}$, then $\re \left(\int_\gamma
\tau\right)=0,$ for all $\gamma$ in $H_1(M(\cj),\r)$ if and only if
$\int_{\gamma_j} \tau =0,$ for all $j=1, \ldots , \sigma+\E$.

In addition, if $\tau$ is holomorphic on $M'$, then $\tau=0$ if and
only if $\int_{\gamma_j} \tau =0,$ for all $j=1, \ldots ,
\sigma+\E$.
\end{assertion}
\begin{proof}
The proof of the first part of this claim is straightforward. For
the second part, take into account that a holomorphic differential
on a compact Riemann surface is zero if and only if it has imaginary
periods.
\end{proof}
\begin{assertion} \label{homologia}
Consider $(b_1, \ldots,b_{\sigma+\E} )\in \r^{\sigma+\E}-\{\vec 0\}$
and $c=\displaystyle \sum_{j=1}^{\sigma+\E} b_j \cdot \gamma_j$,
then there exists a holomorphic differential on $\overline{M(\cj)}$
satisfying $I^*\tau=-\overline{\tau}$ and $\int_c
\tau\not=0.$

Furthermore, given $L$ an integral divisor in $M'$, invariant under
$I$ and with $\mbox{\rm supp}(L) \subset \overline{M(\cj)}$, then
$\tau$ can be chosen in such a way that $(\tau)_0 \geq L$, where $(
\cdot)_0$ means the divisor of zeros.
\end{assertion}
\begin{proof}
The first holomorphic De Rham cohomology group, $H_{\rm
hol}^1(\overline{M(\cj)})$ is a complex vector space of dimension
$\varrho.$ If we define $F \colon H_{\rm hol}^1(\overline{M(\cj)})
\longrightarrow H_{\rm hol}^1(\overline{M(\cj)})$
$$F([\omega] ) \df \left[\overline{I^* \, (\omega)} \right],$$
then $F$ is a (real) linear involution of $H_{\rm
hol}^1(\overline{M(\cj)})$. Hence, $H_{\rm
hol}^1(\overline{M(\cj)})=V^+ \oplus V^-$, where $V^+=\{[\omega] \;
| \; F([\omega])=[\omega] \}$ and $ V^-=\{[\omega] \; | \;
F([\omega])=-[\omega] \}.$ Moreover, the linear map $[\omega]
\mapsto [ \ri \, \omega]$ establishes an isomorphism between $V^+$
and $V^-$. Then, we have that the real dimension $\dim_\r
V^+=\varrho.$ So, the linear map:
$$T: V^- \longrightarrow \left(\ri \cdot \r^{\sigma+\E} \right)
\times \r^{\sigma+\E-1}$$
$$T([\psi])= \left( \int_{\gamma_1} \psi, \ldots,
\int_{\gamma_{\sigma+\E}}\psi, \int_{\Gamma_1} \psi, \ldots,
\int_{\Gamma_{\sigma+\E-1}}\psi\right),$$ is an isomorphism where
$\ri=\sqrt{-1}$. In particular, there exists $[\psi]$ in $V^-$ such
that
$$T([\psi]) \notin \left\{(z_1, \ldots,z_{\sigma+\E}, w_1, \ldots ,
w_{\sigma+\E-1}) \in \left(\ri \cdot \r^{\sigma+\E} \right) \times
\r^{\sigma+\E-1} \; | \;  \sum_{j=1}^{\sigma+\E} b_j z_j=0
\right\}.$$ Hence $\im \left(\int_c \psi \right) \neq 0$. Now, using
Claim 3.2 in \cite{density}, we can prove the existence of a
holomorphic differential on $\overline{M(\cj)}$, $\widetilde \psi$,
with the same periods as $\psi$ and such that $(\widetilde \psi)_0
\geq L.$ Then, we define the $1$-form
  $\tau \df
\frac 12 \left(\widetilde \psi - \overline{I^*(\widetilde
\psi)}\right)$. From the definition, it is clear that
$I^*(\tau)=-\overline{\tau}$ and $(\tau)_0 \geq L$. Moreover, as
$\psi$ and $\widetilde \psi$ have the same periods, one has:
$$
\int_c \tau= \frac 12 \left( \int_c \widetilde \psi -
\overline{\int_c \widetilde \psi} \right)= \ri \im \left( \int_c
\widetilde \psi \right)= \ri \im \left( \int_c \psi \right) \neq 0.
$$
\end{proof}

From this point on in the proof, we can follow the proof of Lemma 1
in \cite{LMM-Nonorientable} to obtain the existence of the function
$H$ satisfying all the assertions in the lemma. For completeness, we
include a sketch of this proof.


\begin{assertion} \label{sobre}
Let $\mathcal{H}^-\left( \overline{M(\cj)} \right)$ be the {\em
real} vector space of the holomorphic functions $t:\overline{M(\cj)}
\rightarrow \c$, satisfying $t \circ I=-\overline{t}.$ Then the
linear map $F: \mathcal{H}^- \left( \overline{M(\cj)} \right)
\rightarrow \r^{2(\sigma+\E)}$, given by:
$$F(t)= \left( \int_{\gamma_j} t \; \Phi_3 \left(\frac1g+g \right) ,  -i \, \int_{\gamma_j} t \;
\Phi_3 \left(\frac1g-g \right) \right)_{j=1, \ldots,\sigma+\E}$$
is surjective.
\end{assertion}
\begin{proof}
We proceed by contradiction. Assume $F$ is not onto. Then, there
exist
$$(\vartheta_1, \ldots, \vartheta_{\sigma+\E},\mu_1,
\ldots,\mu_{\sigma+\E}) \in \r^{2(\sigma+\E)}- \{ (0, \ldots,0 )
\},$$ such that:
\begin{equation}\label{biblioteca}
\sum_{j=1}^{\sigma+\E} \left[ \vartheta_j \int_{\gamma_j} t \; \Phi_3
\left(\frac1g+g \right) - i \, \mu_j \int_{\gamma_j} t \; \Phi_3 \left(\frac1g-g \right) \right]
=0\quad \forall t\in \mathcal{ H}^- \left( \overline{M(\cj)} \right).
\end{equation}

Assertion~\ref{homologia} guarantees the existence of a differential
$\tau$ satisfying
\begin{enumerate}[(i)]
\item $(\tau)_0\geq {\left(\left(\left(\frac1g+g\right)\Phi_3\right)_{|_{\overline{M(\cj)}}}
\right)_0}^2 \, \left( \left(d \, \left(\frac{1-g^2}{1+g^2}\right)\right)_{|_{\overline {M(\cj)}}} \right)_0$,
\item $\displaystyle - i \,\sum_{j=1}^{\sigma+\E} \mu_j \int_{\gamma_j}\tau \not=0$,
\item $I^\ast \tau=-\overline\tau$.
\end{enumerate}
Let us define $\displaystyle y\df \frac \tau{d \,
\left(\frac{1-g^2}{1+g^2}\right)}, \mbox{ and } t \df \frac{d(y)
}{\left(\frac 1g+g \right)\Phi_3}.$ Taking the choice of $\tau$ into
account, the function $t$ belongs to $\mathcal{H}^-\left(
\overline{M(\cj)} \right)$. In this case and after integrating by
parts, (\ref{biblioteca}) becomes $-i \,\sum_{j=1}^{\sigma+\E} \mu_j
\int_{\gamma_j}\tau =0$, which is absurd. This contradiction proves
the claim.
\end{proof}

Using the previous claim we infer the existence of
$\{t_1,\ldots,t_{2 (\sigma+\E)}\}\subset \mathcal{H}^- \left(
\overline{M(\cj)} \right)$ such that $\det(F(t_1),\ldots,
F(t_{2(\sigma+\E)}))\not=0$. Up to changing $t_i \leftrightarrow
t_i/x$, $x>0$ large enough, we can assume that
\begin{equation} \label{por}
\left|\exp \left(\sum_{i=1}^{2(\sigma+\E)} x_it_i(p)\right)-1 \right|<1/(2 m),
\end{equation}
$  \forall (x_1,\ldots,x_{2(\sigma+\E)})\in\r^{2(\sigma+\E)}, \; |x_i|<1, \;
i=1,\ldots,2(\sigma+\E), \quad \forall p\in\overline{M(\cj)}.$
\begin{assertion}
For each $n \in \n$, there is $t_0^n \in H^-(\overline{M(\cj)})$ such that:
\begin{enumerate}[(i)]
\item $|t_0^n-n|<1/n$ in $K_1$ (and so $|t_0^n+n|<1/n$ in $I(K_1)$),
\item $|t_0^n|<1/n $ in $K_2$.
\end{enumerate}
\end{assertion}
\begin{proof}
Given $n\in\n$, we apply a Runge-type theorem on $\overline{M}$, see
\cite[Theorem 10]{royden}, and obtain a holomorphic function
$T_0^n:\overline{M(\cj)}\rightarrow\c$ satisfying
\begin{itemize}
\item $|T_0^n-n|<1/n$ in $K_1$,
\item $|T_0^n+n|<1/n$ in $I(K_1)$,
\item $|T_0^n|<1/n$ in $K_2$.
\end{itemize}
We take $t_0^n = \frac12 (T_0^n-\overline{T_0^n \circ I}).$ From
this, it is trivial to check Properties {\em (i)} and {\em (ii)}.
\end{proof}

For $\Theta=(\lambda_0, \ldots ,\lambda_{2(\sigma+\E)}) \in
\r^{2(\sigma+\E)+1}$, we define
$$h^{\Theta , n}(p) \df \exp \left[ \lambda_0 \, t_0^n(p)+
\sum_{j=1}^{2 (\sigma+\E)} \lambda_j \; t_j(p) \right],\quad \forall
p \in \overline{M(\cj)}.$$ Label $g^{\Theta , n}=g/h^{\Theta , n}$
and $\Phi_3^{\Theta,n}=\Phi_3$. As $\left\{{t^n_0}_{|_{K_2}}
\right\}_{n \in \n}$ is uniformly bounded, then, up to a
subsequence, we have $\left\{{t^n_0}_{|_{K_2}} \right\} \to
t_0^\infty \equiv 0$, uniformly on $K_2$. We also define on $K_2$
the Weierstrass data $g^{\Theta , \infty}=g/h^{\Theta , \infty}$,
$\Phi_3^{\Theta,\infty}=\Phi_3$, where
$$h^{\Theta , \infty}(p) \df \exp \left[\sum_{j=1}^{2(\sigma+\E)}
\lambda_j \; t_j(p) \right],\quad \forall p \in  K_2.$$ Observe that
third Weierstrass differential of the aforementioned holomorphic
data has no real periods. Therefore, we must only consider the
period problem associated to $\Phi_j^{\Theta,n}$, $j=1,2$. To do
this, we define the period map $\mathcal{P}_n: \r^{2(\sigma+\E)+1}
\rightarrow \r^{2(\sigma+\E)}$, $n \in \n \cup \{\infty \}$;
$$\mathcal{P}_n(\Theta)= \left(  \int_{\gamma_j} \Phi_1^{\Theta,n}  , \int_{\gamma_j}
\Phi_2^{\Theta,n} \right)_{j=1, \ldots,\sigma+\E}. $$ Since the
initial immersion $X$ is well-defined, then one has $\mathcal{
P}_n(0)=0,$ $\forall n \in \n \cup \{ \infty \}.$ Moreover, it is
not hard to check that
$$\jj(\mathcal{P}_n)(0)=\det(F(t_1), \ldots,F(t_{2(\sigma+\E)})) \neq 0,
\quad \forall n  \in \n \cup \{ \infty \}.$$

Applying the Implicit Function Theorem to the map $\mathcal{ P}_n$
at $0 \in [-\epsilon,\epsilon \,] \times \overline{B}(0,r)$, we get
an smooth function $L_n: I_n \rightarrow \r^{2(\sigma+\E)}$
satisfying $\mathcal{ P}_n(\lambda_0,L_n(\lambda_0))=0$, $\forall
\lambda_0 \in I_n,$ where $I_n$ is a maximal open interval
containing $0$ (here, maximal means that $L_n$ can not be regularly
extended beyond $I_n$).

We next check that the supremum $\epsilon_n$ of the connected
component of $L_n^{-1}(\overline B(0,r)) \cap[0,\epsilon]$
containing $\lambda_0=0$  belongs to $I_n$. Indeed, take a sequence
$\{\lambda_0^k \}_{k \in \n}\nearrow \epsilon_n$. As
$\{L_n(\lambda_0^k) \} \subset \overline{B}(0,r)$, then, up to a
subsequence, $\{L_n(\lambda_0^k) \}_{k \in \n} \to \Lambda_n \in
\overline{B}(0,r)$. Taking into account that $\jj
(\mathcal{P}_n)(\epsilon_n, \Lambda_n) \neq 0$, the local unicity of
the curve $(\lambda_0,L_n(\lambda_0))$ around the point
$(\epsilon_n, \Lambda_n)$, and the maximality of $I_n$, we infer
that $\epsilon_n \in I_n$. Therefore, either $\epsilon_n=\epsilon$,
or $L_n(\epsilon_n)=\Lambda_n \in \partial (B(0,r))$.

We will now see that $\epsilon_0 \df \liminf \{ \epsilon_n \}>0$.
Otherwise, there would be a subsequence $\{ \epsilon_n \} \to 0$.
Without loss of generality,  $\epsilon_n<\epsilon$, $\forall n \in
\n$, and so $\Lambda_n \in \partial(B(0,r))$, $\forall n \in \n$. Up
to a subsequence, $\{ \Lambda_n \} \to \Lambda_\infty \in
\partial(B(0,r))$. The fact $\mathcal{ P}_\infty(0,0)= \mathcal{
P}_\infty(0,\Lambda_\infty)=0$ would contradict the injectivity of
$\mathcal{ P}_\infty(0, \cdot)$ in $\overline{B}(0,r)$. Hence the
function $L_n :[0, \epsilon_0] \rightarrow \overline{B}(0,r)$ is
well-defined, $\forall n \geq n_0$, $n_0$ large enough.

Label $(\lambda_1^n, \ldots, \lambda_{2( \sigma+\E)}^n)=
L_n(\epsilon_0).$ From (\ref{por}) we have $|\exp[ \sum_{j=1}^{2(
\sigma+\E)} \lambda_j^n t_j]-1 | <1/(2 m)$ on $D\left(
\overline{\pg} \right)$. Hence, if $n$ ($\geq n_0$) is large enough,
the function:
$$H(z) \df \exp \left[ \epsilon_0 \, t_0^n(z)+\sum_{j=1}^{2( \sigma+\E)}
\lambda^n_j \; t_j(z) \right]$$ satisfies items {\em 1}, {\em 2} and
{\em 3} in Lemma~\ref{lem:rungeno}. Since the period function
$\mathcal{ P}_n$ vanishes at $\Theta_n=(\epsilon_0,\lambda_1^n,
\ldots, \lambda_{2(\sigma+\E)}^n)$, then the minimal immersion
$\widetilde F$ associated to the Weierstrass data $g^{\Theta_n,n}$,
$\Phi_3^{\Theta_n,n}=\Phi_3$ is well-defined. This proves item {\em
4} in the lemma.


\end{proof}

\subsubsection{The existence of a holomorphic differential without zeros} \label{subsubsec:I}

In the paper \cite{density}, the existence of a holomorphic $1$-form
without zeros $\omega$ on $M(\cj_0)$ is used over and over again,
for a given multicycle $\cj_0$. In our new setting, we need the
following related result:
\begin{lemma} \label{th:I}
Given $ \cj_0$ a multicycle in $M'$, which is invariant under $I$,
there exists a holomorphic $1$-form $\omega'$ in $M(\cj_0)$, without
zeros, and satisfying $I^*(\omega')=\overline{ \omega'}$.
\end{lemma}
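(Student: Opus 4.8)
The plan is to produce $\omega'$ by symmetrizing a holomorphic $1$-form without zeros on the orientable surface $M(\cj_0)$, exactly as was done for the function $H$ in Lemma~\ref{lem:rungeno}. First I would invoke the corresponding orientable result from \cite{density}: on the finite open Riemann surface $M(\cj_0)$ there is a holomorphic $1$-form $\omega$ with no zeros. The naive symmetrization $\frac12(\omega - \overline{I^*\omega})$ lies in the $(-1)$-eigenspace of $[\eta]\mapsto[\overline{I^*\eta}]$, hence satisfies $I^*(\cdot)=-\overline{(\cdot)}$; but we want the eigenvalue $+1$, so instead I would work with $\tfrac{\ri}{2}\bigl(\omega+\overline{I^*\omega}\bigr)$ or, equivalently, first arrange $\omega$ itself to be (anti)invariant. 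The real difficulty, of course, is that such a symmetrized form need not be zero-free: zeros are created wherever $\omega$ and $\pm\overline{I^*\omega}$ interfere destructively.

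To handle this, I would argue as in the proof of Assertion~\ref{homologia} / Claim~3.2 in \cite{density}: the space of holomorphic $1$-forms on the compact double $M'$ that satisfy the reality condition $I^*\eta=\overline{\eta}$ is a real vector space of dimension $\sigma+\E$ (half the complex dimension of all abelian differentials on the genus-$(2\sigma+2\E-1)$-type double of $M(\cj_0)$, cut down by the involution). Restricting such forms to the finite surface $\overline{M(\cj_0)}$ and adding exact forms $dh$ with $h\circ I = -\overline h$ (to kill periods or to prescribe behavior near a finite set of points), I get a large family of candidate $1$-forms. Since the zero divisor of a nonzero holomorphic $1$-form on a compact surface has fixed degree, a standard transversality/general-position argument inside this family shows that a generic member of the family has no zeros on the \emph{compact} set $\overline{M(\cj_0)}$: the set of forms vanishing at a prescribed point is a proper (real-codimension $\geq 1$, in fact $2$) subspace, and one covers $\overline{M(\cj_0)}$ by finitely many coordinate patches and takes a form avoiding all the corresponding bad loci. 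A cleaner route, which I would actually follow, is to fix any point $p_0\in M(\cj_0)$ with $I(p_0)\ne p_0$, choose via the double-covering machinery (Royden's Runge theorem, \cite{royden}, as used already in the proof of Lemma~\ref{lem:rungeno}) a holomorphic function $h$ on $\overline{M(\cj_0)}$ with $h\circ I = -\overline h$ whose differential $dh$ is nonvanishing and dominates a fixed reference $1$-form off a small neighborhood, then correct near the finitely many remaining potential zeros; the eigenspace structure ($V^+\oplus V^-$, with multiplication by $\ri$ swapping the summands) established in Assertion~\ref{homologia} is exactly what lets one stay inside the class $I^*\omega'=\overline{\omega'}$ while doing this.

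The key steps, in order: (1) pass to the orientable double $M'$ with its fixed-point-free antiholomorphic involution $I$, and recall the orientable existence of a zero-free holomorphic $1$-form on $M(\cj_0)$ from \cite{density}; (2) identify the real vector space $\Omega^-:=\{\eta \text{ holomorphic on }\overline{M(\cj_0)} : I^*\eta=\overline\eta\}$ and the companion space of functions $h$ with $h\circ I=-\overline h$, using the $V^+\oplus V^-$ decomposition as in Assertion~\ref{homologia}; (3) show this family is rich enough that a generic/perturbed member $\omega'$ has no zeros on the compact surface $\overline{M(\cj_0)}$, hence none on $M(\cj_0)$, by a codimension count over finitely many coordinate charts together with a Runge approximation step as in \cite{royden}; (4) conclude that $\omega'$ is the desired $1$-form. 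I expect step (3) — ruling out zeros while respecting the symmetry constraint — to be the main obstacle, but it is essentially the same perturbation argument already carried out in \cite{LMM-Nonorientable} and in the proof of Lemma~\ref{lem:rungeno} above, so the proof should reduce to invoking that machinery rather than redoing it.
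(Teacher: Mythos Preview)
Your approach is genuinely different from the paper's, and step~(3) has a real gap. The paper does not symmetrize or perturb; it works on the compact surface $M'$ via the Abel--Jacobi map. From harmonic forms on the quotient $M'/\langle I\rangle$ one obtains a basis $\omega_1,\ldots,\omega_\sigma$ of holomorphic differentials on $M'$ with $I^*\omega_i=\overline{\omega_i}$, and one checks that the Abel--Jacobi map $f$ satisfies $f(I(p))=v_0+c(f(p))$ with $c$ the conjugation on the Jacobian. The key step is that $f$ restricted to degree-$(\sigma-1)$ divisors supported in a single removed disk $\d_i$ is already \emph{onto} $\c^\sigma/\Lambda$; hence there is $D\in\Div_{\sigma-1}(\d_i)$ with $f(D+I(D))=f((\omega))$ for any fixed $I$-symmetric abelian differential $\omega$, and Abel's theorem produces a meromorphic $h$ on $M'$ with $(\omega/h)=D+I(D)$. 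The form $\tau=\omega/h$ is then holomorphic and zero-free on all of $M(\cj_0)$, and a unimodular constant corrects it to $I^*\omega'=\overline{\omega'}$. The mechanism is to \emph{relocate} all $2\sigma-2$ zeros into the deleted disks, not to perturb them away.

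The gap in your step~(3) is precisely that zeros cannot be perturbed away. For each fixed $p$ the condition $\eta(p)=0$ has real codimension~$2$, but the bad set $\{\eta:\eta\text{ vanishes somewhere in }\overline{M(\cj_0)}\}$ is the union over a real-$2$-parameter family of such subspaces, so the dimension count gives codimension~$0$; more to the point, a form with a simple zero at an interior point retains a nearby zero under \emph{every} small perturbation (Hurwitz/argument principle), so the bad set has nonempty interior and no genericity or Baire argument can escape it. Your ``cleaner route'' has the same defect: prescribing the $1$-jet of $h$ at finitely many points and then ``correcting near the remaining potential zeros'' only moves zeros around. A workable alternative in your spirit is multiplicative rather than additive: take any zero-free $\omega$ on $M(\cj_0)$, set $G=\overline{I^*\omega}/\omega$ (nowhere-vanishing, with $G\cdot\overline{G\circ I}\equiv 1$), and solve the cocycle equation $f\cdot\overline{f\circ I}=G$ for a nowhere-vanishing holomorphic $f$; then $\omega'=f\omega$ is zero-free and $I$-symmetric. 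But solving that equation is a separate (Hilbert-$90$-type) argument that neither Lemma~\ref{lem:rungeno} nor \cite{LMM-Nonorientable} supplies.
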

\begin{proof}
Let $\pi: M' \rightarrow \widetilde M' $ be the projection and let
$\widetilde h_1, \ldots , \widetilde h_\sigma$ be a basis of the
harmonic 1-forms on $\widetilde M'$. Since $I$ is an orientation
reversing isometry of the orientable surface $M'$, then $I$ leaves
invariant the harmonic $1$-forms $h_i \df \pi^*(\widetilde h_i)$ and
$I^*(\star h_i)=-\star h_i$, where $\star$ denotes the Hodge
operator. Hence, $I^*(\omega_i)=\overline{\omega}_i$, where
$\omega_i \df h_i+ \ri \, \star h_i$. A simple Euler characteristic
calculation shows that $\omega_1 , \ldots, \omega_\sigma$ is a basis
for the holomorphic differentials of $M'$. Let $W=(\omega_1 ,
\ldots, \omega_\sigma)$, then the Abel-Jacobi map $f: M' \rightarrow
\c^\sigma/\Lambda$ satisfies:
\begin{multline} \label{eq:iabel}
f(I(p))= \left[\int_{p_0}^{I(p)} W\right]=\left[ \int_{p_0}^{I(p_0)}
W+ \int_{I(p_0)}^{I(p)} W \right]
=v_0+\left[\int_{p_0}^{p}I^* (W)\right]= \\
v_0+\left[\int_{p_0}^{p} \overline{W} \right]=v_0+c \circ f (p),
\end{multline}
where $c$ is the map on $\c^\sigma/\Lambda$ induced by the complex
conjugation in $\c^\sigma$ and $p_0\in M'$ is a base point.

Let $U \subseteq M'$ be an open region and let $\Div(U)$ denote the
set of divisors in $M'$ whose support is contained in $U$. Then the
map $f$ can be extend linearly to $\Div(U)$ as follows:
$$f\left(\sum_{j=1}^k n_j \cdot p_j \right)=\sum_{j=1}^k n_j\cdot f(p_j).$$
\begin{assertion} \label{as:div}
Let $ \Div_{\sigma-1}(U)$ denote the subset of divisors in $
\Div(U)$ of degree $\sigma-1$. Then $f \colon \Div_{\sigma-1}(U)
\rightarrow \c^\sigma/\Lambda$ is onto.
\end{assertion}
Let $n$ in $\n$ and consider $S_n$ the group of permutations of $(1,
\ldots, n)$. $S_n$ acts on the cartesian product $(M')^n$; the
quotient $S^n(M')$ is called the $n^{\rm th}$ {\em symmetric power
of $M'$}. $S^n(M')$ is a complex manifold of dimension $n$ whose
points can be identified with divisors of the form $D=\sum_{j=1}^n
P_j$. It is well-known \cite[Chap. 15]{nara} that the set of $D \in
S^\sigma ,$ such that the rank at $D$ of the differential of $f
\colon S^\sigma (M') \rightarrow \c^\sigma/\Lambda$ is maximal, $=
\sigma$, is open and dense in $S^\sigma(M')$. In particular,
$f(S^\sigma(U))$ contains an open subset of $\c^\sigma/\Lambda$. So,
if we consider
$$f:S^{n \, \sigma-1}(U) \times S^{(n-1) \sigma}(U) \rightarrow \c^\sigma/ \Lambda$$
$$f(D,E)= f(D)-f(E),$$
then the image $f \left(S^{n \, \sigma-1}(U) \times S^{(n-1)
\sigma}(U)\right) \subseteq f(\Div_{\sigma-1}(U))$ contains an open
subset whose diameter diverges, in terms of $n$. This completes the
proof of Assertion~\ref{as:div}. \vspace{3mm}

Consider $\omega$ a nonzero holomorphic $1$-form satisfying
$I^*\omega=\overline{\omega}$, then the divisor of $\omega$ has this
form $(\omega)= \sum_{j=1}^{\sigma-1} p_j+\sum_{j=1}^{\sigma-1}
I(p_j).$ If we label $\mathcal{K}=\sum_{j=1}^{\sigma-1} f(p_j)$,
then \eqref{eq:iabel} implies that $f((\omega))=2
\Re(\mathcal{K})+(\sigma-1) \, v_0,$ where $\Re$ is the map induced
by the real projection $\mbox{\bf Re} \colon \c^\sigma \rightarrow
\r^\sigma.$ If we consider one of the disks $\d_i$ in the complement
of $\overline{M(\cj_0)}$, then Assertion~\ref{as:div} gives the
existence of $D\in \Div(\d_i)$ so that $\deg(D)=\sigma-1$ and
$f(D)=\mathcal{K}$. So, one has that $\deg(D+I(D))=2 \sigma-2$ and
$$f(D+I(D))=\mathcal{K}+c(\mathcal{K})+\deg(D) \, v_0=2
\Re(\mathcal{K})+(\sigma-1) \, v_0=f((\omega)).$$ Abel's theorem
gives the existence of a meromorphic function $h$ on $M'$ such that
$(h)=(w)-D-I(D)$. In other words, the meromorphic $1$-form
$\tau\df \omega/h$ satisfies:
$$ \left( \tau \right)= \left( \overline{I^*(\tau)} \right)=D+I(D).$$
Therefore, $\tau= a \,  \overline{I^*(\tau)},$ for some complex
constant $a \in \c^*.$ Since $I$ is an involution, then we deduce
that $|a|=1.$

If $a=-1$, then  $\omega' \df \ri \tau $ is  the $1$-form that we
are looking for. If not, we define $\omega' \df
\frac{1+\overline{a}}{2} \tau $ and it satisfies the assertions of
this lemma.
\end{proof}

\subsubsection{López-Ros parameters adapted to nonorientable minimal surfaces} \label{subsubsec:h}
In order to obtain that the examples constructed in \cite{density}
were proper, we used  special types of functions with simple poles
at some points near the boundary of the surface and which were
approximated by $1$ in almost the entire surface. To do the same
thing in the nonorientable case, we need to modify the proof of
Lemma 2 in \cite{density} according to the following explanation.

The holomorphic function $\zeta_{i,k}:M(\cj_0) -\{p_i^k\}
\longrightarrow \c$ having a simple pole at $p_i^k$ \cite[subsection
4.1.1, p. 14]{density} must be replaced by a holomorphic function on
$M(\cj_0) -\{p_i^k\}$ having a simple pole at $p_i^k$ and a zero
(not necessarily simple) at $I(p_i^k)$. The existence of such a
function is guaranteed by Noether's gap theorem (see
\cite{farkas-kra}.)

Now, for $\Theta=\left(\lambda_0, \lambda_1,
\ldots,\lambda_{2(\sigma+\E)}\right) \in \r^{2(\sigma+\E)+1}$, we
consider the function $h^\Theta$ (compare with \cite[subsection
4.1.1, equation (3.10)]{density}):
\begin{equation}
h^\Theta= \frac{\lambda_0\, \theta_i^k \, \zeta_{i,j}+
\exp\left(\sum_{j=1}^{2(\sigma+\E)} \lambda_j \, \varphi_j \right)}
{\,\overline{\lambda_0 \, \theta_i^k \, (\zeta_{i,j} \circ I)}+
\exp\left(-\sum_{j=1}^{2 (\sigma+\E)} \lambda_j \, \varphi_j
\right)}.
\end{equation}
Then, the function $h^\Theta$ in subsection 4.1.1 of \cite{density}
must be replaced by this new one and then all the arguments work in
the same way. The reason for changing $h^\Theta$ is because we need
that
$$h^\Theta \circ I= \frac 1{ \, \overline{h^\Theta}\,},$$ in order
to use this function as a López-Ros parameter for nonorientable
minimal surfaces.

This concludes our discussion on how to adapt the proof of
Theorem~\ref{th:first} to the nonorientable case, which completes
the proof of Theorem~\ref{th:limit}.

\subsection{A nonexistence theorem for nonorientable minimal
surfaces properly immersed in smooth bounded domains}

In this section, we describe a topological obstruction to the
existence of certain proper immersions of open nonorientable
surfaces into a given smooth bounded domain. For this description we
need the following definition.

\begin{definition} Let $\cd$ be a smooth bounded domain. We say that a
proper immersion $f:M \to \cd$ of an open surface $M$ is properly
isotopic to a properly embedded surface in $\cd$ if there exists a
proper continuous map $F:M \times [0,1] \to \cd$ such that for each
$t\in [0,1]$, $F_t=F|_{M \times \{t\}}$ is  a proper immersion into
$\cd$, $F_0$ corresponds to $f$ and $F_1$ is a proper embedding.
\end{definition}

\begin{theorem} \label{th:t0} Suppose $\cd$ is a smooth bounded
domain in $\rth$ with boundary  being a possibly disconnected
surface of genus $g$ and $M$ is a properly immersed surface in
$\cd$. If $M$ is properly isotopic to a properly embedded surface in
$\cd$, then $M$ has at most $g$ nonorientable ends\footnote{An end
of a surface $M$ is said to be {\it nonorientable} if every proper
subdomain with compact boundary which represents the end is
nonorientable.}.
\end{theorem}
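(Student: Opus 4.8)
The key is to reduce the statement to a homological obstruction coming from the fact that an embedded surface in $\cd$ separates, or at least that an embedded surface can be pushed off itself, while each nonorientable end of $M$ forces a certain $\z/2$ self-intersection class. First I would set up the topological framework: since $M$ is properly isotopic to a properly embedded surface $M_1 \subset \cd$, it suffices to prove the statement for $M_1$ itself, because the number of nonorientable ends is a topological invariant of $M$ and the ambient isotopy does not change the homeomorphism type of $M$. So assume from the start that $f\colon M \to \cd$ is a proper embedding.

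\textbf{Reducing to a count of nonorientable ends.} Let $e_1,\dots,e_m$ be the distinct nonorientable ends of $M$, and for each $i$ choose a proper subdomain $E_i \subset M$ with compact connected boundary curve $c_i$ representing $e_i$, with the $E_i$ pairwise disjoint. The plan is to show that the curves $c_1,\dots,c_m$ give rise to $m$ linearly independent classes in $H_1(\partial \cd;\z/2)$, which has dimension $2g$ when $\partial \cd$ has genus $g$ (so the bound would be $2g$, not $g$; I would re-examine whether the intended bound uses a refined observation forcing a pairing and hence halving the count — more on this below). Concretely: each $E_i$ is a noncompact nonorientable surface with one boundary circle, so its orientation double cover restricted to a collar is connected; equivalently, the core of any Möbius-band-like subsurface of $E_i$ carries a nontrivial class $\omega_i \in H_1(E_i;\z/2)$ with nonzero self-intersection, i.e. $\omega_i \cdot \omega_i = 1$ in the $\z/2$ intersection form of $M$. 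I would make this precise using the first Stiefel–Whitney class $w_1(M) \in H^1(M;\z/2)$: an end is nonorientable exactly when $w_1$ does not vanish on every neighborhood of that end.

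\textbf{The separation/pushoff argument.} Now I would use properness and embeddedness. Push $M$ slightly off itself inside $\cd$ to a nearby embedded copy $M'$; properness guarantees $M \cap M' = \emptyset$ after a generic small normal perturbation supported away from a compact set, \emph{except} that a nonorientable end obstructs a globally disjoint pushoff — the normal bundle of $M$ in $\cd$ restricted to $E_i$ is nonorientable (since $\cd \subset \rth$ is orientable, the normal line bundle of $M$ is $w_1(M)$), so any section of it must vanish somewhere near $e_i$, forcing a self-intersection. Counting these self-intersections mod $2$ and comparing with what the algebraic topology of the pair $(\cd,\partial\cd)$ allows is the crux: a properly embedded surface in $\cd$ defines a class in $H_2(\cd,\partial\cd;\z/2)$ whose image under $\partial$ lands in $H_1(\partial\cd;\z/2)$, and the self-intersection data is controlled by the $\z/2$ self-intersection of this relative class, which in turn is bounded by the genus $g$ of $\partial\cd$ via the intersection form on $\partial\cd$ (this is where the bound $g$ rather than $2g$ should come from — a Lagrangian-type estimate, since the image of $H_1(\partial\cd)\to H_1(\cd)$ is isotropic for the intersection pairing, so the relevant classes span at most a $g$-dimensional subspace).

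\textbf{Main obstacle.} The hard part will be making the pushoff/self-intersection bookkeeping rigorous in the \emph{noncompact, proper} setting: ordinary intersection theory is for closed or compact-with-boundary manifolds, so I would need to work with a large compact exhaustion $\cd_n$ of $\cd$, truncate $M$ to $M\cap \cd_n$, control the boundary contributions on $\partial\cd_n$, and pass to the limit, checking that the count of nonorientable ends is exactly the defect that survives the limit. Equivalently — and this is probably the cleaner route — I would invoke that a properly embedded surface in the smooth bounded domain $\cd$ is, after capping, a class in $H_2(\overline\cd,\partial\cd;\z/2) \cong H^1(\overline\cd;\z/2)$, use Poincaré–Lefschetz duality and the long exact sequence of the pair to identify the image of the boundary map with a subspace of $H_1(\partial\cd;\z/2)$ that is isotropic for the cup-product pairing (hence of dimension $\le g$), and finally argue that the $m$ nonorientable ends force $m$ linearly independent classes in this subspace. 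Assembling the duality computation and verifying the isotropy claim is the technical heart; everything else is standard once that is in place.
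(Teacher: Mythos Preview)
Your instincts are right in three places: reducing to the embedded case, working with $\z/2$ intersection theory, and recognizing that the bound $g$ (rather than $2g$) must come from an isotropy/Lagrangian estimate on $H_1(\partial\cd;\z/2)$. Where the plan breaks down is in the mechanism that actually produces the independent classes, and in your proposed ``cleaner route'' via $H_2(\overline\cd,\partial\cd;\z/2)$. The latter does not work as stated: a properly embedded noncompact surface $M\subset\cd$ need not define any class in $H_2(\overline\cd,\partial\cd;\z/2)$, because $M$ does not meet $\partial\cd$ --- it only accumulates there, and the limit set can be an arbitrary compact subset of $\partial\cd$, so there is no ``capping'' available. Your pushoff idea has a similar problem: you cannot count zeros of a normal section on a noncompact surface, and the limit toward $\partial\cd$ is uncontrolled.

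The paper avoids all of this by never leaving a compact region. It takes a collar $R(\varepsilon,\delta)\cong\partial\times[0,1]$ near a boundary component $\partial$ of genus $k$ and, for each of $k+1$ nonorientable ends accumulating on $\partial$, finds a \emph{compact} nonorientable subsurface $F_j$ of the end with $\partial F_j\subset\partial R(\varepsilon,\delta)$. Nonorientability of $F_j$ inside the orientable $3$-manifold $R(\varepsilon,\delta)$ gives a simple closed curve $\gamma_j\subset F_j$ with $F_j\cdot\gamma_j=1$ in $\z/2$; since the $F_i$ are pairwise disjoint, $F_i\cdot\gamma_j=\delta_{ij}$. Pushing each $\gamma_j$ to a homologous curve $\alpha_j$ in the parallel surface $\partial_\varepsilon$, one gets $(\partial F_i\cap\partial_\varepsilon)\cdot\alpha_j=\delta_{ij}$ in $H_1(\partial_\varepsilon;\z/2)$, so the $1$-cycles $\partial F_i\cap\partial_\varepsilon$ are linearly independent. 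But they are also pairwise disjoint embedded multicurves, hence span an isotropic subspace of $H_1(\partial_\varepsilon;\z/2)$, which has dimension at most $k$ --- contradiction. The step you were missing is precisely this localization to compact pieces $F_j$ in a product collar, which replaces your proposed limit argument and makes the intersection theory elementary.
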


\begin{proof} Since $M$ is properly isotopic to an embedded
surface $M'$ in $\cd$, then $M$ is homeomorphic to $M'$. In
particular, the number of nonorientable ends of $M'$ and $M$ is the
same. Hence, it suffices to prove the theorem in the special case
that $M$ is properly embedded, a property that we now assume holds.

Arguing by contradiction, suppose $M$ has at least $g+1$
nonorientable ends $e_1, e_2, \ldots e_{g+1}$.  Since $\cd$ is
smooth, then for some small $\varepsilon>0$,
$\overline{\cd(\varepsilon)}=\{ x \in \overline{\cd} \mid
\dist_{\rth}(x,\partial \cd )\leq \varepsilon\}$ is a smooth domain
which is diffeomorphic to $\partial \cd \times [0,1]$, where
$\partial \cd$ is a smooth compact surface of genus $g$. For some
$\varepsilon$ sufficiently small, $\overline{\cd(\varepsilon)} \cap
M$ contains a collection $\{E_1, E_2, \ldots, E_{g+1}\}$ of pairwise
disjoint, proper subdomains of $M$ with compact boundary and such
that $E_i$ represents the end $e_i$ for $i\in \{1,2, \ldots, g+1
\}$. In this case, after reindexing, we may assume that there is a
component $\partial$ of $\partial {\mathcal D}$ of genus $k$ such
that the limit sets $L(E_1), \ldots, L(E_{k+1})$ are contained in
$\partial$.

For some small positive $ \de$ with $\de<\ve$, the surfaces
$\partial_{\ve}, \partial_{\de}$ in ${\mathcal D}$ parallel to
$\partial$ of distance $\ve, \de, $ respectively, are embedded and
the closed region $R(\ve,\de)\subset {\mathcal D}$ bounded by
$\partial_{\ve}\cup \partial_{\de}$ is topologically $\partial
\times [0,1]$. Since each $E_i$ is nonorientable, for $\de$
sufficiently small, $R(\ve, \de)\cap E_i$ contains a connected,
smooth, compact nonorientable domain $F_j$ with $\partial F_j\subset
\partial R(\ve, \de)=\partial_{\ve}\cup \partial_{\de}$ for
$j\in\{1, 2, \ldots, k+1\}. $

Since for each $j\in\{1, 2,\ldots, k+1\}$, $F_j$ is nonorientable
and $R(\ve,\de)$ is orientable, there is a simple closed curve
$\g_j\subset F_j$ such that $\overline{F}_j\cap \overline{\g}_j=1\in
H_0 (R(\ve,\de), \Z_2),$ where $\overline{\g}_j \cap \overline{F}_j$
is the homological intersection number mod 2 of $\g_j$ and $F_j$
relative to $\partial R(\ve, \de)$. Since the domains $F_1, \ldots,
F_{k+1}$ are pairwise disjoint, we conclude that $\overline{F}_i\cap
\overline{\g}_j=\de_{i,j}$ for $i,j \in \{1, 2,\ldots, k+1\}$.

Now let $\a_j$ be a closed curve in $\partial_{\ve}$ which is
homologous in $R(\ve,\de)$ to $\g_j$.  Since $\overline{F}_i\cap
\overline{\g}_j=\de_{i,j},$  then $\overline{\partial F_i \cap
\partial_{\ve}} \cap \overline{\a}_j = \de_{i,j}$, where we consider
${\partial F_i \cap
\partial_{\ve}}$ to represent an element in $H_1(\partial_{\ve},
\Z_2\}$. In particular, the collection of {\bf pairwise disjoint}
simple closed curves that make up $\bigcup_{i=1}^{k+1} \partial F_i
\cap
\partial_{\ve}$ represent at least $k+1$ independent homology
classes in $\HH_1(\partial_{\ve}, \Z_2)$, which is impossible since
$\partial_{\ve}$ is a compact orientable surface of genus $k$. This
contradiction completes the proof of the theorem.
\end{proof}
The following corollary is an immediate consequence of
Theorem~\ref{th:t0}.

\begin{corollary} If $M$ is an open surface with an infinite number of nonorientable ends,
then there does not exist a proper immersion of $M$ into any smooth
bounded domain, such that the immersion is properly isotopic to a
properly embedded surface in the domain.
\end{corollary}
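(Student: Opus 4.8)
The plan is to obtain the statement as a direct contradiction from Theorem~\ref{th:t0}. Suppose, to the contrary, that there exist a smooth bounded domain $\cd$ in $\rth$ and a proper immersion $f\colon M\to\cd$ which is properly isotopic to a properly embedded surface in $\cd$. Since $\cd$ is smooth and bounded, its boundary $\partial\cd$ is a compact (possibly disconnected) surface, and hence has a well-defined finite genus $g\in\n\cup\{0\}$. Theorem~\ref{th:t0} then applies verbatim to the pair $(f,\cd)$ — its hypotheses are precisely that $M$ is a properly immersed surface in $\cd$ and that it is properly isotopic to a properly embedded surface in $\cd$ — and it tells us that $M$ has at most $g$ nonorientable ends.

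On the other hand, by hypothesis $M$ has infinitely many nonorientable ends, so in particular it has strictly more than $g$ of them. This contradiction shows that no such pair $(\cd,f)$ can exist, which is exactly the conclusion of the corollary. The only point worth a word of care is that $\partial\cd$ genuinely carries a finite genus; this is immediate from the compactness of $\partial\cd$, which in turn follows from $\cd$ being a smooth bounded domain. Consequently there is no real obstacle here: all of the substantive work has already been carried out in the proof of Theorem~\ref{th:t0}, and the corollary is essentially a one-line consequence of it.
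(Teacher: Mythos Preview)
Your proof is correct and matches the paper's approach: the paper simply states that the corollary is an immediate consequence of Theorem~\ref{th:t0}, and your argument spells out exactly that immediate deduction.
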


\subsection{The description of the universal domains of Conjecture~\ref{conj:mmn}
}\label{sec:6.3}
\begin{figure}[htbp]
    \begin{center}
        \includegraphics[width=0.65\textwidth]{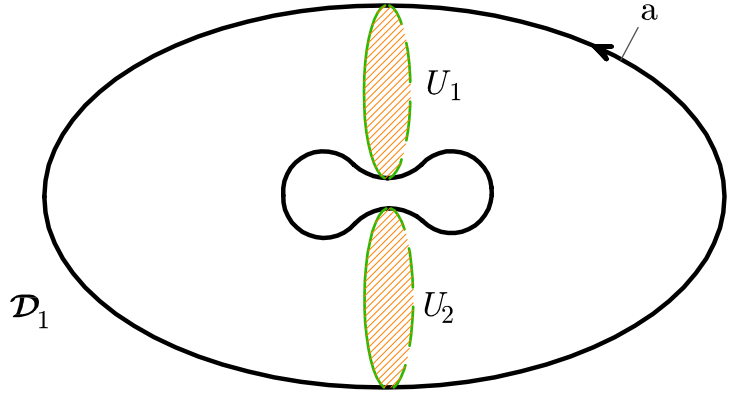}
    \end{center}
   \caption{The domain $\cd_1$, the curve $\mathbf a$ and the disks $U_1$ and $U_2$} \label{fig:basico}
\end{figure}

The main goal of this section is to describe bounded domains of
$\r^3$ which are candidates for solving parts (2) and (3) of the
embedded Calabi-Yau conjecture. From the previous theorem, we know
that some restrictions are necessary in order to properly embed a
nonorientable surface in a smooth bounded domain. That condition is
that the number $n$ of nonorientable ends can not be greater than
the genus of the boundary of the domain. We will actually construct
a sequence of domains $\{\cd_n\}_{n \in \n}$ which are solid
$n$-holed donuts and which contain certain properly embedded
nonorientable minimal surfaces. We conjecture that: \ben
\item If $M$ is a nonorientable open surface with no nonorientable
ends, then it can be properly minimally embedded in $\cd_1$ with a
complete metric.
\item If $n \geq 1$ and  $M$ has $n$ nonorientable ends, then
it can be properly and minimally embedded in $\cd_n$ with a complete
metric. \een

\begin{figure}
    \begin{center}
        \includegraphics[width=0.75\textwidth]{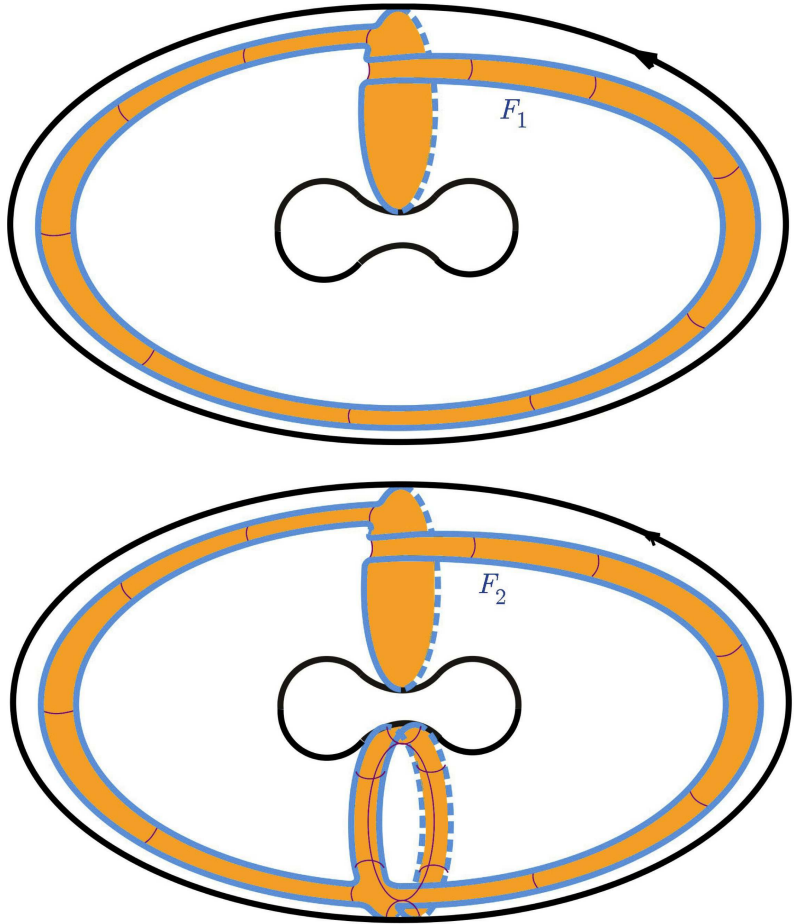}
               \end{center}
   \caption{The minimal surface $F_1$ has the topology of a Möbius strip
    and $F_2$ is topologically a  minimal Klein bottle minus a disk.} \label{fig:fig-4}
\end{figure}

\begin{example} \label{example1}
Consider  a smooth compact solid torus $\overline{\cd_1}$ satisfying
the following properties (see Figure~\ref{fig:basico}): \ben
\item $\overline{\cd_1}$ is invariant under reflections in
the coordinate planes $P_{xy}$, $P_{xz}$ and $P_{yz}.$
\item The intersection of $P_{yz}$ with $\overline{\cd_1}$
consists of two compact convex disks, $U_1$ and $U_2$.
\item The intersection of $P_{xy}$ with $\partial{\cd_1}$
consists of two curves, and the exterior one $\mathbf a$ is  convex.
\item There exists  an open neighborhood $N$ of
$\partial U_1\cup \mathbf{a} \cup \partial U_2 $ in $\partial \cd_1$ with $\kappa_1(N)>1.$

\een
\end{example}
\begin{figure}[htbp]
    \begin{center}
        \includegraphics[width=\textwidth]{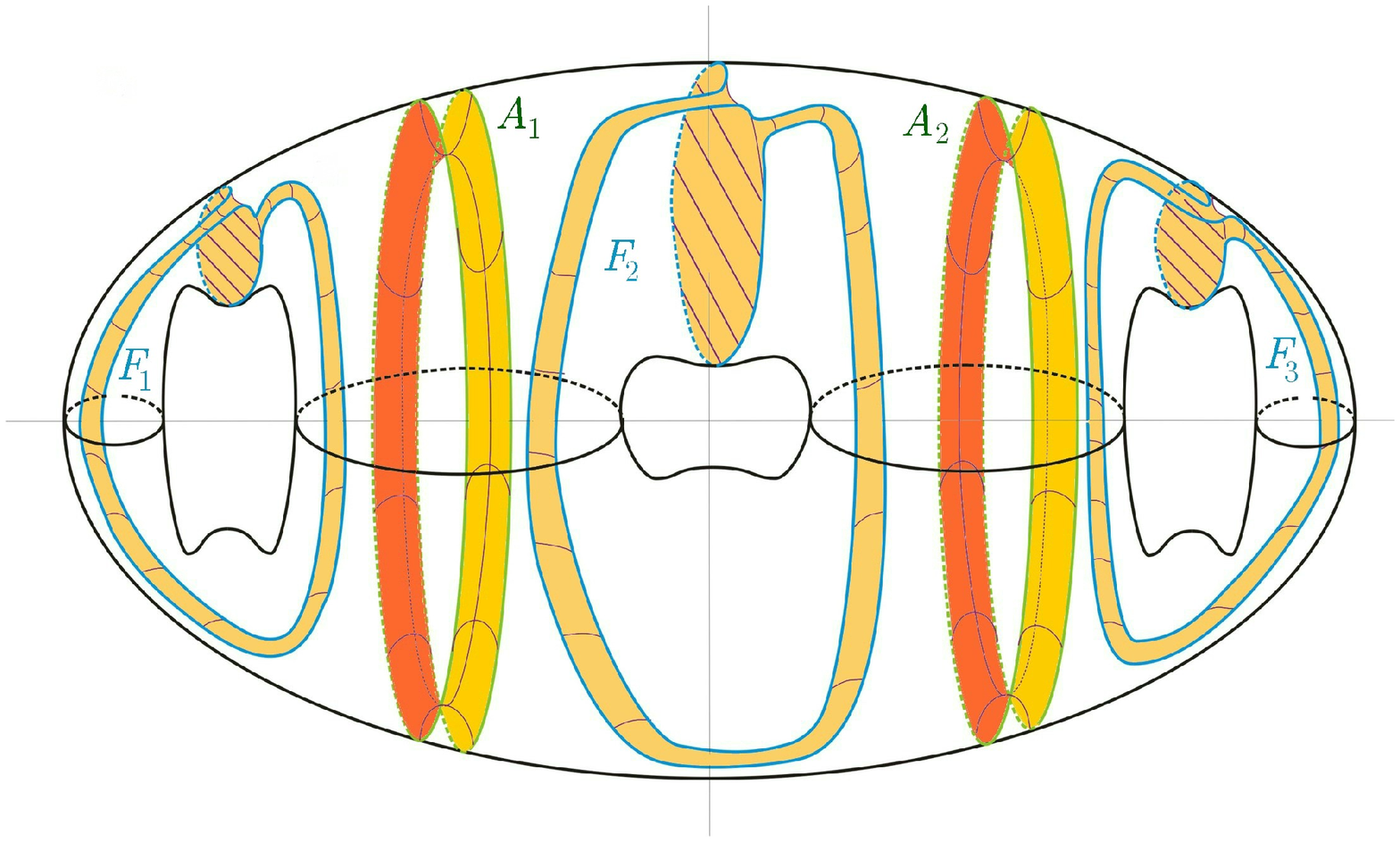}
    \end{center}
    \caption{The domain $\cd_3$}
    \label{fig:fig-1}
\end{figure}

\begin{example}
For $n>1$,  consider now a smooth compact solid $n$-holed torus
$\overline{\cd_n}$ satisfying the following properties (see Figure
~\ref{fig:fig-1} for the case of $\overline{\cd_3}$): \ben
\item $\overline{\cd_n}$ is invariant under reflections
in the coordinate planes $P_{xy}$, $P_{xz}$ and $P_{yz}.$
\item For each integer $k$ in $[-n+1,n-1]$, one of the
components in the intersection of the plane $P_k=\{x=k\}$ and
$\overline{\cd_n}$ is  a compact convex disk, $U_k$ with positive
$y$-coordinate.
\item The intersection of $P_{xy}$ with $\partial{\cd_n}$
consists of $n+1$ curves, and the exterior one $\mathbf{a}$ is  convex.
\item There exists  an open neighborhood $N$ of
$\displaystyle \mathbf{a} \cup \left(\bigcup_{k=-n+1}^{n-1}\partial
U_k \right)$ in $\partial \cd_n$ with $\kappa_1(N) \geq \ve_n >0.$

\een
\end{example}
Finally, we described the domain $\cd_\infty$.

\begin{example} \label{ex:universal}
We consider an infinitely many holed solid donut $\cd_\infty$ with a
single nonsmooth point $p_\infty$ on its boundary which is
accumulation point of the holes of $\cd_{\infty}$. This domain
satisfies the following properties (see Figure~\ref{fig:fig-7}):
\ben
\item The domain $\cd_\infty$ is contained in the slab
$\{ 0 \leq x \leq 1\}$ and $p_\infty=(1,0,0).$
\item $\overline{\cd_\infty}$ is invariant under reflections in
the coordinate planes $P_{xy}$ and $P_{xz}.$
\item There exists  positive real numbers
$r_n$, $s_n$, $n \in \n$, such that:
\ben
\item $r_1<s_1<r_2<s_2<r_3< \cdots <r_n<s_n<r_{n+1}< \cdots$ and $\lim_n r_n=1,$
\item the planes $\{x=r_n\}$ intersect $\cd_\infty$ in
two convex disks, one of them contained in the half space $\{y>0\}$
that we call $U(r_n)$,
\item  the planes $\{x=s_n\}$ intersect $\cd_\infty$
in one convex disk, which we call $V(s_n)$,
\een
\item The intersection of $P_{xy}$ with $\partial{\cd_\infty}$
contains a unique exterior curve $\mathbf{a}$ which  is  convex and smooth.

\item There exists  an open neighborhood $N$ of
$\displaystyle \mathbf{a} \cup \left(\bigcup_{k=1}^{\infty}\partial
U(r_k) \cup \partial V(s_k) \right)$ in $\partial
\cd_\infty-\{p_\infty\}$ with $\kappa_1(N) \geq \ve_\infty>0,$ for
some positive $\ve_\infty$.

\een
\end{example}
Using the bridge principle, the classification of noncompact
surfaces and a suitable choice of a compact exhaustion, we next
prove the following proposition.

\begin{proposition} \label{th:no} For every $n\in \n$, the smooth
domain ${\mathcal D_n}$ satisfies:
\begin{enumerate}
\item For any nonorientable open surface $M$ with no
nonorientable ends, there exists a proper, stable, minimal
noncomplete embedding $f\colon M\to \cd_1$.
\item For any open surface $M$ with $n$
nonorientable ends, there exists a proper, stable, minimal
noncomplete embedding $f\colon M\to \cd_n$.
\end{enumerate}
Furthermore, the embedding $f$ satisfies that the limit sets of
distinct ends of $f(M)$ are disjoint.
\end{proposition}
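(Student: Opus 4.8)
The plan is to run an induction over a compact exhaustion of $M$ adapted both to the topology of $M$ and to the distinguished convex structure of $\cd_n$, using only flat disks (which are area‑minimizing, hence strictly stable) together with White's bridge principle (Theorem~\ref{th:bridge}); completeness is never forced, which is exactly what makes the simultaneous control of stability and embeddedness feasible. As topological input I would first record, via the classification of noncompact surfaces, that an open surface $M$ with no nonorientable ends (case (1), where $n=1$) admits a smooth compact exhaustion $M_1\subset M_2\subset\cdots$ in which $M_1$ is connected and carries all of the nonorientable genus of $M$, while each $M_{j+1}-\Int(M_j)$ consists of annuli together with a single \emph{orientable} elementary piece (a pair of pants, creating a new end, or a one‑handled annulus); and that an open surface $M$ with exactly $n$ nonorientable ends (case (2)) admits such an exhaustion in which, in addition, exactly $n$ of the elementary pieces are ``attach a cross‑cap'', placed so as to organize the $n$ nonorientable ends. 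This is a routine variant of Lemma~\ref{lem:simple} allowing cross‑caps, and because $\partial\cd_n$ has genus $n$ the bookkeeping automatically respects the obstruction of Theorem~\ref{th:t0}.

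Next, the base case. Using the symmetries of $\cd_n$ and the convex collar $N$ of $\mathbf a\cup\bigcup_k\partial U_k$ in $\partial\cd_n$ (where $\kappa_1(N)\ge\ve_n>0$), I would construct a compact, embedded, \emph{strictly stable} minimal surface $\Sigma_1\subset\cd_n$ homeomorphic to $M_1$ with $\partial\Sigma_1\subset N$: start from the flat convex disk bounded by a parallel of $\mathbf a$ inside $\cd_n$ (unique Plateau solution for a planar convex curve, hence area‑minimizing and strictly stable) and attach to it finitely many thin bridges running along arcs in $N$ near the $\partial U_k$, so as to realize the required nonorientable genus and any extra compact handles. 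This is precisely the mechanism by which the model minimal pieces $F_1$ (a M\"obius strip) and $F_2$ (a Klein bottle minus a disk) of Figure~\ref{fig:fig-4} are obtained, and it extends verbatim to arbitrary compact topology. Since each bridge is taken thin, White's estimates ($\mathrm{area}(M_n)\to\mathrm{area}(M)$, $\|x-f_n(x)\|=O(w_n)$, $f_n\to\mathrm{id}$ off the arc) keep the surface embedded and, by the additional remark that a sufficiently thin bridge does not raise the Morse index, keep it strictly stable.

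For the inductive step, fix a compact exhaustion $\{\cd_n^{(j)}\}$ of $\cd_n$ and suppose $\Sigma_j\subset\cd_n$ is compact, embedded, strictly stable, homeomorphic to $M_j$, meets $\partial\cd_n$ only in that $\partial\Sigma_j\subset N$ lies within $\ve_j$ of $\partial\cd_n$, and has $\Sigma_j\cap\overline{\cd_n^{(j)}}$ carrying exactly the topology of $M_j$. To pass to $\Sigma_{j+1}$ I would (i) attach to $\partial\Sigma_j$, along a short arc in the part of $N$ lying in $\cd_n-\cd_n^{(j)}$, the next elementary bridge(s) prescribed by the exhaustion of $M$ — one bridge for a cross‑cap or an end, two for a handle, exactly as in Lemmas~\ref{lem:adding-ends} and \ref{lem:adding-handles}; (ii) when the new piece is an end or a nonorientable end, additionally push its new free boundary curve out toward a prescribed component of $\partial\cd_n$ within $N$, using the strict convexity of $N$ as a barrier in the manner of Lemma~\ref{lem:afm}(3)--(4); and (iii) take all bridges thin and all pushes small, so that $\Sigma_{j+1}$ remains embedded (the new material lies in a thin slab near $\partial\cd_n$ inside $\cd_n-\cd_n^{(j)}$, disjoint from $\Sigma_j\cap\overline{\cd_n^{(j)}}$), remains strictly stable (a thin bridge does not change the index), and has $\partial\Sigma_{j+1}\subset N$ within $\ve_{j+1}$ of $\partial\cd_n$. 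Crucially I would route the $n$ nonorientable ends down the $n$ holes of $\cd_n$, the $i$‑th toward a neighborhood of the corresponding $\partial U_{k_i}$, and all remaining ends toward pairwise disjoint neighborhoods of the other $\partial U_k$ and of $\mathbf a$; this forces the limit sets of distinct ends to lie in pairwise disjoint compact subsets of $\partial\cd_n$. Passing to the limit $\Sigma=\bigcup_j\Sigma_j$, the bridge‑principle estimates plus the thinness make $\{\Sigma_j\}$ converge smoothly on compact subsets of $\cd_n$ to an embedded minimal surface; the containment $\Sigma\cap\overline{\cd_n^{(j)}}\subset\Sigma_j$ makes $\Sigma$ proper in $\cd_n$ and homeomorphic to $M$; each $\Sigma_j$ being strictly stable makes $\Sigma$ (which has empty boundary and every compact subdomain of which is a smooth limit of strictly stable surfaces) stable; and $\Sigma$ is noncomplete since near $\partial\cd_n$ its intrinsic metric stays $C^1$‑close to that of the flat disks, so $\partial\cd_n$ is at finite intrinsic distance.

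The main obstacle, I expect, is reconciling the three competing demands in the inductive step. Keeping the surface \emph{embedded} while increasing genus or adding ends confines both the bridges and the outward pushes to the thin convex collar $N$; keeping it \emph{stable} forces the bridges to be thin, and here one must be careful, since a bridge that is thin but long — a slender catenoidal neck — is unstable, so one must use precisely the ``shrinking nicely'' bridges of Theorem~\ref{th:bridge}, whose width and diameter both tend to $0$; and keeping it \emph{proper} forces the stage‑$j$ material to live outside $\cd_n^{(j)}$. Making these compatible is easiest for orientable elementary pieces; the technical heart is carrying it out when the piece is nonorientable and must be realized near one of the flat disks $U_k$ while still limiting cleanly onto a prescribed, isolated region of $\partial\cd_n$. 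The genus‑versus‑number‑of‑nonorientable‑ends bookkeeping, in contrast, is automatic once one routes one nonorientable end per hole.
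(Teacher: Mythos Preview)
Your overall plan---induct via White's bridge principle starting from flat area-minimizing disks, keeping bridges thin to preserve stability and embeddedness---matches the paper's, but two concrete points would block the argument as written.

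First, the base surface. The curve $\mathbf{a}$ is the outer \emph{longitude} of the solid torus $\cd_1$ (the exterior component of $P_{xy}\cap\partial\cd_1$); neither it nor any parallel of it bounds a disk in $\cd_n$. The flat disks the paper uses are the \emph{meridian} disks $U_k$ (in $P_{yz}$ for $\cd_1$, in the planes $\{x=k\}$ for $\cd_n$), and the nonorientability of $F_1$ arises because the bridge is attached to $U_1$ along an arc in $N$ that runs \emph{once around the hole}, almost parallel to $\mathbf{a}$: the core of the resulting surface generates $\pi_1(\cd_1)$, and a normal frame carried along it returns reversed. Your phrase ``attach thin bridges running along arcs in $N$ near the $\partial U_k$'' does not produce nonorientability; a short bridge on a disk yields an annulus. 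In Case~2 the paper builds $\Sigma_1$ from one such M\"obius strip $F_i$ per hole together with separating annuli $A_i$, all joined by further bridges---the holes are used essentially, not merely for end-routing.

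Second, the inductive step. You invoke ``pushes \ldots\ in the manner of Lemma~\ref{lem:afm}(3)--(4)'' together with a compact exhaustion $\{\cd_n^{(j)}\}$, but Lemma~\ref{lem:afm} is exactly the density-theorem machinery, which preserves neither embeddedness nor stability; the paper explicitly discards it here (``this construction is now much easier since we do not have to deal with the density theorem''). In the paper every $\partial\Sigma_m$ already lies in $N\subset\partial\cd_n$ and stays there; there is no outward pushing and no internal exhaustion of $\cd_n$. The point you flag as ``the technical heart'' but do not supply is how to attach a M\"obius strip minus a disk at a stage $m>1$ (needed repeatedly along each nonorientable end): the paper's device is to take a short arc $\Gamma\subset N$ from a boundary point $p_i$ to its ``opposite'' point $\widehat p_i$ across the earlier M\"obius-creating bridge, arranged so that the intersection numbers of $\Gamma$ with the oriented boundary component agree at both endpoints (Figure~\ref{fig:moebius}); a bridge along such an arc adds a cross-cap instead of splitting the boundary, and the next one is placed inside that bridge, and so on.
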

\begin{proof}
We are going to divide the proof into the case where $M$ has
orientable ends and the case where $M$ has $n$ nonorientable ends.

\noindent {\bf Case 1. $M$ is nonorientable and it has orientable
ends.} By the classification of compact nonorientable surfaces,
there exists a compact exhaustion of $M$, $\cm=\{M_k\mid k\in \n\}$,
such that:
\begin{itemize}
\item $M_1$ is either a Möbius strip or a Klein
bottle with a disk removed, and $M-M_1$ is orientable.
\item Consider the surface $M'$ formed by attaching
a disk $D$ along the
boundary of $M-M_1$ and the associated exhaustion $\cm'=\{M'_1=D, M_k'=M_k\mid k \geq 2\}.$
Then the new exhaustion $\cm'$ is a simple exhaustion of $M'$.
\end{itemize}

Recall from the description in Example~\ref{example1} that $N$ is an
open neighborhood of $\partial U_1 \cup \mathbf{a} \cup \partial
U_2$. Consider a simple arc $\Gamma$ in $N$ with distinct end points
on $\partial U_1$ and which is almost parallel to $\mathbf{a}$. Let
$F_1$ be the compact embedded minimal Möbius strip obtained by
adding a thin bridge to $U_1$ along $\Gamma$ as described in
Figure~\ref{fig:fig-4}. Notice that we can guarantee that $\partial
F_1 \subset N$ by choosing the bridge thin enough. Let $F_2$ be the
embedded compact Klein bottle minus a disk obtained by adding a thin
bridge along $\partial U_2$ to the surface $F_1$ in such a way that
$\partial F_2 \subset N$ as in Figure~\ref{fig:fig-4}.

We now describe how to construct the desired proper minimal
immersion. If $M_1$ is a Möbius strip, then we choose $\Sigma_1$ to
be $F_1$. Since $M-M_1$ is an orientable surface with a ``simple
exhaustion'' and  $\kappa_1(N)>1$, then we can follow the proof of
Case 2 in Theorem~\ref{th:first} in order to construct a proper
minimal embedding $f:M \to \cd_1$ such that the limit set of
different ends of $M$ are disjoint. Of course, this construction is
now much easier since we do not have to deal with the density
theorem; one just  uses the bridge principle to construct compact
embedded minimal surfaces $\Sigma_n \subset \cd_n$. If $M_1$ is a
Klein bottle with a disk removed, then we take $\Sigma_1=F_2$ and
repeat the same argument to construct the desired immersion.

\noindent {\bf Case 2. $M$ is nonorientable and it has $\mathbf n$ nonorientable ends.}

Using again the classification of compact nonorientable surfaces and
arguments similar to those in the proof of Lemma~\ref{lem:simple},
there exists a compact exhaustion of $M$, $\cm=\{M_k\mid k\in \n\}$,
such that:
\begin{itemize}
\item $M_1$ is the compact nonorientable surface with $n$ boundary
components and Euler characteristic $\chi(M_1)=-2 n+1$.
\item Every boundary curve of each $M_k$ separates $M$ into two
components.
\item For each $k \in \n$, $M_{k+1}-\Int(M_k)$ contains exactly
one nonannular component $\Delta_{k+1}$ which is either a Möbius
strip minus a disk, a pair of pants, or an  annulus with a handle.
\item If $\Delta_{k+1}$ is an annulus with a handle, then the
component of $M-\Int(M_k)$ which contains $\Delta_{k+1}$ is orientable.
\item If $\Delta_{k+1}$ is a pair of pants, then at most one
of the two components of $M-\Int(M_{k+1})$ which intersects
$\partial \Delta_{k+1}$ is nonorientable.
\end{itemize}

For the following construction of the domain $\cd_3$, see
Figure~\ref{fig:fig-1}. The planes $P_{-n+2}$, $P_{-n+4}$, $\ldots$,
$P_{n-2}$, separate $\partial \cd_n$ into $n$ open regions that we
call $R_1$, $R_2$, $\ldots$, $R_n$ and which are ordered by their
relative $x$-coordinates. Let $A_1$, $A_2$, $\ldots$, $A_{n-1}$ be
compact stable minimal  annuli in $\cd_n$ with $\partial A_i \subset
\partial \cd_n$, ordered by their relative $x$-coordinates, with boundaries
close and parallel to the boundaries of the regions $R_1$, $R_2$,
$\ldots$, $R_n$, respectively. Let $F_1$, $F_2$, $\ldots$, $F_n$ be
the compact stable minimal Möbius strips with $F_i \subset R_i$,
$i=1, \ldots,n$, constructed by attaching bridges to  the disks
$U_{-n+1}$, $U_{-n+3}$, $\ldots$, $U_{n-1}$ in a manner similar to
the construction of $F_1$ in Case 1. Furthermore, we can assume that
the boundary curves of these annuli and Möbius strips are contained
in the neighborhood $N$. We obtain our surface $\Sigma_1$ by
connecting the annuli and Möbius strips by thin minimal bridges in
$N$ close to the intersection of $P_{xz}$ and $\partial \cd_n$ and
where $z>0$. Finally, we can also assume that $\partial \Sigma_1
\subset N$ and $\Sigma_1$ has $n$ boundary curves $b_i \subset R_i$,
$i=1, \ldots, n$, where we fix an orientation of each boundary
curve.

We now describe how to finish the construction of the desired proper
minimal immersion. In Case 1, the changes in the topology of
$\Sigma_m$, $m \in \n$, occur near  one (prescribed) point in the
boundary of $\Sigma_1. $ In our case, we prescribe $n$ points, $p_i
\in b_i$, $i=1, \ldots,n$, where $p_i$ lies on the boundary of the
bridge used to make $F_i$. The process of adding a pair of pants or
an annulus with a handle to $\Sigma_m$ is the same as in the
orientable case; one attaches a very thin bridge $B$ near a point of
the boundary of $\Sigma_n$ or one attaches $B$ and then a second
bridge $B'$ in the center of $B$ in order to attach an annulus with
a handle (see Figures~\ref{fig:ends-3}.)

The process to add a Möbius strip to $\Sigma_m$ is by attaching a
very thin bridge $B$ along a short oriented simple arc in
$N-\partial \Sigma_m$ with end points on an oriented component
$\gamma \subset \partial \Sigma_m$ and which has the same
intersection numbers with $\gamma$ at each of its end points. For
example, suppose that  $\Delta_2$ is a Möbius strip attached to
$\partial M_1$ along a  boundary component corresponding to  $b_i
\subset \Sigma_1$. In this case, we choose a short arc $\Gamma$
connecting $p_i$ to its opposite point $\widehat p_i$ in the
corresponding bridge used to produce the Möbius strip $F_i$ (see
Figure~\ref{fig:moebius}). Note that the intersection number of
$\Gamma$ with $b_i$ at $p_i$ is opposite to the intersection number
at $\widehat p_i$. In this case we add a bridge $B_1$ to $\Sigma_1$
along $\Gamma$ like in Figure~\ref{fig:moebius} to make $\Sigma_2$.
Since the component of $M-M_1$ containing $\Delta_2$ has exactly one
nonorientable end, then there exists a smallest $k>2$ such that
$\Delta_k$ is a Möbius strip minus a disk  contained in this
component. So, in the construction of $\Sigma_k$, we will again
attach a bridge, this time inside $B_1$ (see
Figure~\ref{fig:moebius}.)
\begin{figure}[htbp]
    \begin{center}
        \includegraphics[width=0.56\textwidth]{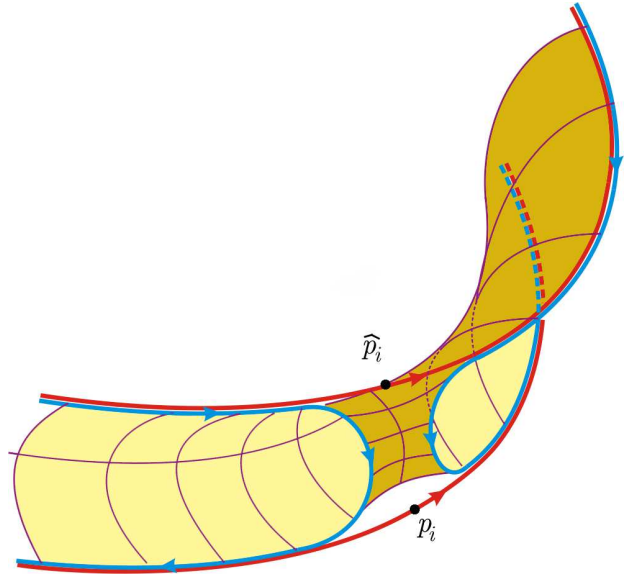}
    \end{center}
   \caption{We choose a short arc $\Gamma$
connecting $p_i$ to its opposite point $\widehat p_i$ in the
corresponding bridge used to produce the Möbius strip $F_i$. Note that the intersection number of
$\Gamma$ with $b_i$ at $p_i$ is opposite to the intersection number
at $\widehat p_i$. In this case we add a bridge $B_1$ to $\Sigma_1$
along $\Gamma$.} \label{fig:moebius}
\end{figure}
Combining all the arguments described in the last two paragraphs, we
obtain a limit surface $\Sigma$ contained in $\cd_n$ and satisfying
all of the statements of the proposition except stability. By
choosing the bridges in the construction of $\Sigma$ sufficiently
thin, then $\Sigma$ is also stable.
\end{proof}

\begin{proposition}\label{prop:infinity}
Every open surface $M$ admits a proper stable minimal embedding in $\cd_\infty.$

\end{proposition}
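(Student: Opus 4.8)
The plan is to follow the scheme used in the proof of Proposition~\ref{th:no}, Case~2, but now for an arbitrary open surface $M$. If $M$ is orientable or, more generally, has only finitely many nonorientable ends, say $n$ of them, then the subdomain of $\cd_\infty$ lying in $\{x<r_{n+1}\}$ is, after a small smoothing at the cut, a compact solid $n$-holed torus of the type treated in Proposition~\ref{th:no}; since a proper embedding into this subdomain is automatically a proper embedding into $\cd_\infty$, that case is immediate. The genuinely new situation --- and the reason $\cd_\infty$ is built with infinitely many holes accumulating at $p_\infty$, something that by Theorem~\ref{th:t0} no smooth bounded domain can have --- is when $M$ has infinitely many nonorientable ends, and this is the case I would concentrate on.

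First I would fix, by the classification of noncompact surfaces together with an argument parallel to the one in Lemma~\ref{lem:simple}, a smooth compact exhaustion $\{M_k\}_{k\in\n}$ of $M$ in which every boundary curve of every $M_k$ separates $M$, and in which each $M_{k+1}-\Int(M_k)$ has exactly one nonannular component, which is a pair of pants, an annulus with a handle, or a M\"obius strip minus a disk, with handles occurring only inside orientable pieces of $M-\Int(M_k)$ and pairs of pants splitting off at most one nonorientable piece. Enumerating the nonorientable ends $e_1,e_2,\dots$ of $M$ and the stages at which their defining M\"obius strips first appear, I would fix once and for all an injection assigning to each $e_j$ a hole of $\cd_\infty$ (one of the ``necks'' cut out by the disks $U(r_k)$, $V(s_k)$), chosen so that the hole assigned to $e_j$ recedes to $p_\infty$ as $j\to\infty$. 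Then I would build inductively a sequence $\{\Sigma_m\}$ of compact, embedded, stable minimal surfaces with $\Sigma_m$ homeomorphic to $M_m$ and $\partial\Sigma_m\subset N$: the surface $\Sigma_1$ is assembled from finitely many of the convex disks $U(r_k)$, $V(s_k)$, the exterior region bounded by $\mathbf a$, and thin connecting minimal bridges in $N$, exactly as in the construction of $\Sigma_1$ in Case~2 of Proposition~\ref{th:no}; and the inductive step uses the same three moves --- attach one thin bridge (pair of pants, cf.\ Lemma~\ref{lem:adding-ends}), a bridge and then a second bridge through its center (annulus with a handle, cf.\ Lemma~\ref{lem:adding-handles}), or a bridge along a short arc $\Gamma\subset N-\partial\Sigma_m$ joining two points of an oriented boundary curve with opposite intersection numbers there (M\"obius strip). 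The single modification is that when the M\"obius strip being attached is the one developing the nonorientable end $e_j$, the arc $\Gamma$ is routed so as to wrap around the hole of $\cd_\infty$ assigned to $e_j$. Each of these is a legitimate application of White's bridge principle (Theorem~\ref{th:bridge}): the (rounded) flat disks are nondegenerate minimal surfaces and nondegeneracy is preserved at every stage by item~(5) of that theorem, while by items~(3) and~(4) the surface $\Sigma_{m+1}$ may be made to agree with $\Sigma_m$ up to a $C^\infty$-small displacement of size $O(w_m)$ away from the new bridge, where the widths $w_m$ are chosen with $\sum_m w_m<\infty$; taking all bridges thin enough also keeps each $\Sigma_{m+1}$ embedded and stable with $\partial\Sigma_{m+1}\subset N$.

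The surfaces $\Sigma_m$ then converge in $C^\infty$ on compact subsets of $\cd_\infty$ to an embedded minimal surface $\Sigma$ homeomorphic to $M$, which is stable as a $C^2$-limit of stable surfaces. Properness follows by arranging the exhaustion and the hole assignment so that all topology added beyond stage $m$ is confined to a prescribed small neighborhood of $\partial\cd_\infty$, and in particular so that the subdomain of $\Sigma$ representing a nonorientable end $e_j$ lies in a ball about $p_\infty$ whose radius tends to $0$ with $j$ (possible since the hole assigned to $e_j$, and hence all the thin bridges used near it, lies arbitrarily close to $p_\infty$); then any compact $K$ in the interior of $\cd_\infty$ is disjoint from a neighborhood of $\partial\cd_\infty$, hence meets $\Sigma_m$ for only finitely many indices and $f^{-1}(K)$ is compact. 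Disjointness of the limit sets of distinct ends of $\Sigma$ comes for free, since distinct ends are developed near distinct holes or in distinct regions separated by the coordinate planes of Example~\ref{ex:universal}, with all boundary curves kept inside thin tubes about the disk boundaries. I expect the main obstacle to be precisely the bookkeeping in the infinitely-many-nonorientable-ends case: one must schedule the assignment of ends to holes and the choice of bridge widths so that each new nonorientable end is genuinely developed in a previously unused hole and so that the portion of $\Sigma$ inserted near the $j$-th used hole shrinks fast enough relative to the rate at which that hole approaches $p_\infty$ --- this quantitative control is exactly what distinguishes the proper, noncomplete embedding produced here from the complete embedding predicted by part~(3) of Conjecture~\ref{conj:mmn}.
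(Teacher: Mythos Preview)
Your overall plan --- fix a good exhaustion of $M$ and realize each step by a bridge move inside $\cd_\infty$ --- is the right shape, but your execution diverges from the paper's in a way that leaves the hardest step unaddressed. The paper does \emph{not} add pairs of pants, handles, or M\"obius strips ``in place'' near a boundary curve of $\Sigma_m$. Instead it pre-plants, once and for all, an infinite supply of compact stable pieces $F_n$ (M\"obius strips), $A_n$ (annuli), $H_n$ (disks with a handle) in pairwise disjoint slabs $X_n,Y_n,Z_n\subset\cd_\infty$ marching toward $p_\infty$, and at step $m$ it simply \emph{connects} $\Sigma_{m-1}$ to whichever one of $W_m\in\{F_m,A_m,H_m\}$ matches the topology of $\Delta_m$ by a single thin bridge along a curve $\gamma(m)\subset N$. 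The entire technical content of the proof is the inductive routing of these connection curves (the paper's Cases A and B): each $\gamma(m)$ must live in a shrinking ``free'' region $C_\infty^m\subset N$, cross each separating disk $V(s_i)$ at most once, and thread past all earlier bridge-strips $N(\gamma(k))$. This scheme has the crucial consequence that for every fixed $r\in(0,1)$ the set $\partial\Sigma_n\cap\{x\le r\}$ eventually freezes, giving a uniform area bound on $\Sigma_n\cap\{x\le r\}$; convergence is then extracted from stability plus this local area bound (a subsequence argument), not from summing bridge widths.

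Your alternative --- keep the boundary curve where it is and route the M\"obius-strip arc $\Gamma$ out to the hole assigned to $e_j$ and back --- forces $\Gamma$ to be a long arc that returns to its starting region after encircling a hole that may lie arbitrarily close to $p_\infty$. Such arcs for different $j$ all pass through the same finite portion of $N$, so you face exactly the routing obstruction the paper works to avoid, but now with curves that must both go and come back rather than merely go forward; you do not explain how to keep these mutually disjoint, disjoint from $\partial\Sigma_m$, and inside $N$, and in particular your scheme does \emph{not} yield the ``$\partial\Sigma_n\cap\{x\le r\}$ stabilizes'' property that the paper uses for convergence and properness. Relatedly, your convergence via $\sum w_m<\infty$ is not what the paper does and would need additional justification, since the bridge principle controls $\Sigma_{m+1}$ only away from the new bridge. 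Finally, your reduction of the finite-nonorientable-ends case to Proposition~\ref{th:no} is not quite right as stated: a proper embedding into the truncated region $\cd_\infty\cap\{x<r_{n+1}\}$ need not be proper in $\cd_\infty$ unless you also arrange that all limit sets lie in $\partial\cd_\infty$ rather than on the artificial cut --- this is fixable, but you should say so. In short, the paper trades your ``wrap and return'' for ``walk forward to a prefabricated piece'', and that reformulation is precisely what makes the routing and the compactness argument tractable.
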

\begin{proof}

Without loss of generality we can assume that $M$ is not
simply-connected, since $U (r_1)$ is simply-connected and properly
embedded in $\cd_\infty$.

Let $X_1 = \cd_\infty \cap \{x <s_1 \}$ and for $n>1$, define $X_n=
\cd_\infty \cap \{s_{3 n}<x <s_{3n+1} \}$. For each $n\in N$, define
$Y_n= \cd_\infty \cap \{s_{3 n-2}<x <s_{3n-1} \}$, and $Z_n=
\cd_\infty \cap \{s_{3 n-1}<x <s_{3n} \}$. In each region $X_n$ we
construct a compact stable embedded minimal Möbius strip $F_n$ by
attaching a thin bridge to the disk $U(r_{3 n-2})$ like in
Proposition~\ref{th:no}. Similarly, in each $Y_n$ let $A_n$ be a
compact stable embedded minimal annulus close to the boundary of
$U(r_{3 n-1})$. Finally, in each region $Z_n$ we construct a stable
compact embedded minimal disk with a handle $H_n$ by attaching a
bridge to a stable compact minimal annulus near the boundary of
$U(r_{3 n})$. Note that the collection $\{X_n,Y_n,Z_n\}_{n \in \n} $
is a pairwise disjoint family of compact domains whose union is a
properly embedded surface with boundary in $\cd_\infty -
\{p_\infty\}$, where $p_{\infty}=(1,0,0)$. We can assume that the
curve $\mathbf{a}$ intersects all of these compact stable surfaces,
$F_n$, $A_n$, $H_n$, $n \in \n$ and $\bigcup_{n=1}^{\infty}(\partial
F_n\cup
\partial A_n\cup
\partial H_n)\subset N$ (see Figure~\ref{fig:fig-7}).

The case where $M$ has finite topology is easily obtained by
connecting a finite number of the components $F_n$, $A_n$ and $H_n$
by bridges. Hence, from now on we assume that $M$ has infinite
topology.

Following similar ideas to those in the proof of Case 2 in the
previous proposition, we can choose a compact exhaustion of $M$ such
that:
\begin{itemize}
\item $M_1$ is a Möbius strip, an annulus or a disk with a handle.
\item Every boundary curve of each $M_k$ separates $M$ into
two components, one of them containing $M_1$.
\item For each $k \in \n$,  $M_{k+1}-\Int(M_k)$ contains exactly
one nonannular component $\Delta_{k+1}$ which is either a Möbius
strip minus a disk, a pair of pants, or an  annulus with a handle.

\end{itemize}
\begin{figure}[!h]
    \begin{center}
        \includegraphics[width=\textwidth]{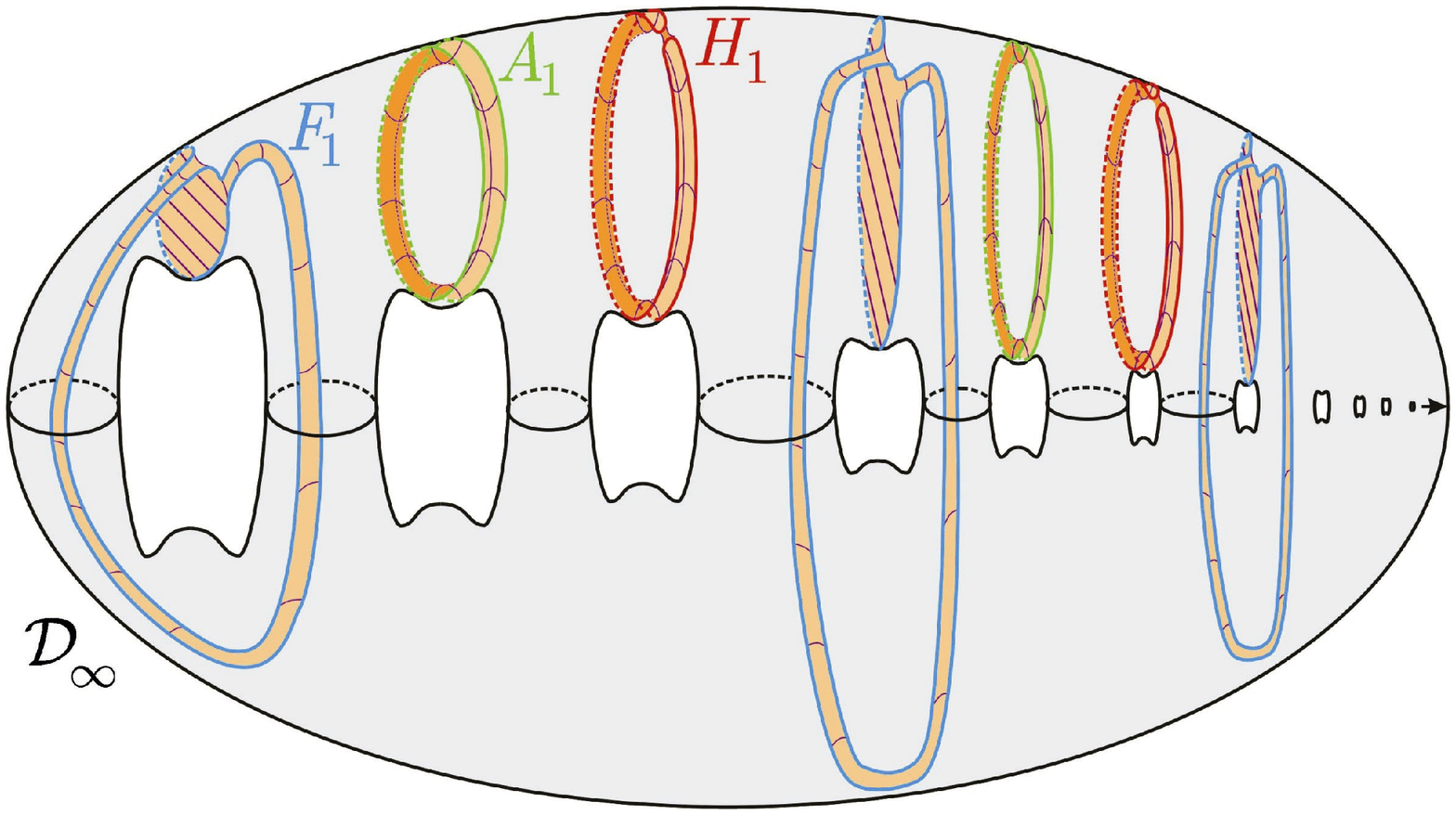}
    \end{center}
   \caption{The domain $\cd_\infty$} \label{fig:fig-7}
\end{figure}

Once again we construct the surface inductively. To do this we only
need to explain how to apply the bridge principle to add a pair of
pants, an annulus with a handle or a Möbius strip to a given
$\Sigma_m$. To guarantee the stability of $\Sigma_m$, we choose
bridges sufficiently narrow.

 Let $\Sigma_1$ be either $F_1$,
$A_1$ or $H_1$ depending on the topology of $M_1$. Then we connect
$\Sigma_1$ to the compact minimal surface $W_2$ in  $\{F_2, A_2,
H_2\}$, which is homeomorphic to $\Delta_2 \subset M_2- \Int (M_1)$
with a disk added to its boundary, by a thin bridge contained in $N$
to make the compact embedded minimal surface $\Sigma_2$. We can do
this connection along an arc that travels from a point in $\partial
\Sigma_1 \cap {\bf a} $ to a point in $\partial W_2 \cap {\bf a}$.

%
%
The surface $\Sigma_m$ is obtained from $\Sigma_{m-1}$ by first
finding a connection curve $\g(m)$ joining a component
$\partial_{m-1}$ of $\partial \Sigma_{m-1}$ to the boundary of one
of the surfaces $W_m \in \{F_m, \, A_m, \, H_m\}$, where $W_m$
depends on the topology of $\Delta_m$. For the construction to work
well, it is helpful that $\g(m)$ be chosen to be contained in a
particular domain $C_{\infty}^m \subset N$ which is defined
inductively as follows. For $\g(k)$, $1\leq k \leq m-1$, there
exists a small regular neighborhood strip $N(\g(k))\subset
C_{\infty}^k \subset N -\left[\partial \Sigma_{k-1} \cup
\left(\cup_{i= 1}^{k-1} N(\g(i))\right) \right]$, which is a positive
distance from $\partial N \cup \left( \cup _{i= 1}^{k-1} N(\g(i)) \right)$ and 
so that $N(\gamma(k))$ contains 
the normal projection to $\partial \cd_\infty$ of the bridge along $\g(k)$. Then
$C_{\infty}^m$ is the connected component of $N-\left[ \partial
\Sigma_m \cup \left(\cup_{i=1}^m N(\g(i)) \right)\right]$ which contains
$p_{\infty}$ in its closure. Furthermore, each $\g(k)$ can be chosen
so that it intersects each $V(s_i)$ transversely in at most one
point.  In particular, we may assume that $N(\g(k))\cap
x^{-1}([s_i, s_{i+1}])  \subset \partial
\mathcal{D}_{\infty}$ is either empty,  a thin strip which
intersects each of the boundary components of $x^{-1}([s_i,
s_{i+1}]) \cap \partial \mathcal{D}_{\infty}$ in a compact arc or
a thin strip which intersects only one of
the boundary curves of $x^{-1}([s_i, s_{i+1}]) \cap \partial
\mathcal{D}_{\infty}$ and this intersection is a connected arc; the
last case occurs when $\g(k)$ intersects the boundary of
$x^{-1}([s_i, s_{i+1}]) \cap \partial \mathcal{D}_{\infty}$ in a
single point, which happens exactly twice. Let $i(0,k) <i(1,k)$ be
the natural numbers so that $\gamma(k)$ intersects 
$\partial V(s_{i(0,k)})$ and $\partial V(s_{i(1,k)})$ in exactly one
point, respectively.

Given a $n \in \n$, assume that $\Sigma_n$ has been constructed and we will construct
$\Sigma_{n+1}$ satisfying all of the properties mentioned in the
previous  paragraph.  Let $\partial_n  \subset \partial \Sigma_n$ be
the component of $\partial \Sigma_n$ which corresponds to $\partial
\Delta_{n+1} \cap \partial M_n$ and let $p_n$ be a point of
$\partial_n$ with largest $x$-coordinate.  Observe that $x(p_n) \in
[s_{i(0,n+1)-1}, s_{i(0,n+1)}]$.  We next
describe in detail how to construct $\g(n+1)$.

{\bf Case A}: $V(s_{i(0,n+1)})\cap \partial \Sigma_n =\emptyset$. In
this case $\g(n+1)$ can be constructed from a small perturbation of
the union of an arc $\beta_0 $ joining $p_n$ to $V(s_{i(0,n+1)})$, where
$ \beta_0$ is contained in $ C_n^{\infty} \cap x^{-1}([s_{i(0,n+1)-1}, s_{i(0,n+1)}]) ,$
and  an arc $\beta_1 \subset \left( V(s_{i(0,n+1)}) \cup \bf a \right)$
with one  end point in $\partial W_{n+1}$.

{\bf Case B}: $V(s_{i(0,n+1)})\cap \partial \Sigma_n \neq \emptyset$.
First consider an arc $\beta_0 \subset C_n^{\infty} \cap
x^{-1}([s_{i(0,n+1)-1}, s_{i(0,n+1)}]) $ joining $p_n$ to a point $q_1$ of
$V(s_{i(0,n+1)}) \cap \partial N(\g(j_1))$, for some $j_1 < n$. Then, we consider
$\a_1$ the connected component of $\partial N(\g(j_1))$ containing $q_1$
and which is contained in $x^{-1} \left( \left[s_{i(0,n+1)},s_{i(1,j_1)} \right] \right).$
If $V(s_{i(1,j_1)+1}) \cap \partial \Sigma_n=\emptyset$, then there is an arc
 $\sigma_1 \subset C^n_\infty \cap x^{-1}([s_{i(1,j_1)}, s_{i(1,j_1)+1}])$
connecting the end point of $\a_1$ to a point in 
$\partial V(s_{i(1,j_1)+1})\subset C^n_\infty.$ As in Case A we can
choose an arc $\beta_1 \subset \left( V(s_{i(1,j_1)+1}) \cup \bf a \right)$
with one  end point in $\partial W_{n+1}$ so that $\g(n+1)$ is a small
perturbation of $\beta_0 \cup \a_1 \cup \sigma_1 \cup \beta_0$.

If $V(s_{i(1,j_1)+1}) \cap \partial \Sigma_n \neq \emptyset$, then
we consider the  arc $\sigma_1$ in $\partial V(s_{i(1,j_1)})- N(\g(j_1))$ connecting
the end point of $\a_1$ to a point $q_2$ in $\partial N(\g(j_2)) \cap V(s_{i(1,j_1)}$
for some $j_2<n$. In this situation, let $\a_2$ be the connected arc of 
$\partial N(\g(j_2)) \cap x^{-1}([s_{i(1,j_1)}, s_{(1,j_2)}])$ starting at $q_2$. Repeating
this process a finite number of times we arrive to a curve $\g(j_k)$ so that
$V(s_{i(1,j_k)+1}) \cap \partial \Sigma_n = \emptyset$. Then we proceed like
in the previous paragraph. We consider an arc
$\sigma_k \subset C^n_\infty \cap x^{-1}([s_{i(1,j_k)}, s_{i(1,j_k)+1}])$
connecting the end point of the corresponding arc  $\a_k$ to a point in 
$\partial V(s_{i(1,j_k)+1})\subset C^n_\infty.$ Finally, we can
choose an arc $\beta_1 \subset \left( V(s_{i(1,j_k)+1}) \cup \bf a \right)$
with one  end point in $\partial W_{n+1}$ so that $\g(n+1)$ is a small
perturbation of $\beta_0 \cup \a_1 \cup \sigma_1 \cup \a_2 \cup \sigma_2 \cup \cdots \cup \a_k \cup \sigma_k \cup \beta_1.$
 \vspace{.2cm}

It is important to notice that the compact embedded minimal surfaces
$\Sigma_n$, $n \in \n$, satisfy that for any $ r \in (0,1)$ the boundary
of $\Sigma_n$ intersects $\{x \leq r\}$ in  the same set of arcs and
closed curves, for $n$ sufficiently large. So, there is a bound on
the area of $\Sigma_n \cap  \{x \leq r\}$, independent of $n$. 
Since the surfaces $\Sigma_n$ are embedded and
stable, then a subsequence of them converges on compact subsets of
$\overline{\cd_\infty}-\{p_\infty\}$ to a limit minimal surface
$\Sigma $ with boundary and which is properly embedded in
$\overline{\cd_\infty}-\{p_\infty\}$ and so that $\Sigma \cap
\cd_\infty$ has the topology of $M$. By boundary regularity, the
limit surface $\Sigma$ is smooth. Moreover, if we choose our
connecting bridges  sufficiently thin, then we can guarantee that
the limit surface is unique.

\end{proof}

\begin{remark}
If we combine the arguments in the previous proof with the density
theorem (including the nonorientable version) one can show that
every open surface $M$ admits a complete proper minimal immersion in
$\cd_\infty$ which is properly isotopic to the minimal embedding
given in Proposition~\ref{prop:infinity}.  Similarly,
Proposition~\ref{th:no} can be adapted to produce complete proper
minimal immersions of a given nonorientable open surface $M$ with $n
\in \n$ nonorientable ends into $\cd_n$ in such a way that the
immersion is properly isotopic to the minimal embedding provided by
the proposition and such that the limit sets of distinct ends are
disjoint (if $M$ has orientable ends, then the immersion lies in
$\cd_1$). Taking Theorem~\ref{th:t0} into account, this last result
is sharp.
\end{remark}

\end{document}